\newtheorem{theorem}{Theorem}
\newtheorem{proposition}[theorem]{Proposition}
\newtheorem{lemma}[theorem]{Lemma}
\theoremstyle{definition}
\theoremstyle{remark}
\newtheorem{remark}[theorem]{Remark}
\numberwithin{equation}{section}
\numberwithin{theorem}{section}
\DeclareMathOperator*{\argmax}{arg\,max} 
\DeclareMathOperator{\E}{\mathbb{E}}
\title{Asymptotics of impulse control problem with multiplicative reward}
\author{Damian Jelito$^{\ast,\dagger}$}
\email{djelito@impan.pl}
\author{\L{}ukasz Stettner$^{\ast}$}
\address{$^{\ast}$Institute of Mathematics, Polish Academy of Sciences, Warsaw, Poland}
\thanks{$^{\dagger}$Corresponding author}
\email{l.stettner@impan.pl}
\def\namedlabel#1#2{\begingroup
    #2%
    \def\@currentlabel{#2}%
    \phantomsection\label{#1}\endgroup
}
\begin{document}

\begin{abstract}
We consider a long-run impulse control problem for a generic Markov process with a multiplicative reward functional. We construct a solution to the associated Bellman equation and provide a verification result. The argument is based on the probabilistic properties of the underlying process combined with the Krein-Rutman theorem applied to the specific non-linear operator. Also, it utilises the approximation of the problem in the bounded domain and  with the help of the dyadic time-grid.

\bigskip
\noindent \textbf{Keywords:} impulse control, Bellman equation, risk-sensitive criterion, Markov process

\bigskip
\noindent \textbf{MSC2020 subject classifications:} 93E20, 49J21, 49K21, 60J25 
\end{abstract}

\maketitle


\section{Introduction}

Impulse control constitutes a versatile framework for controlling real-life stochastic systems. In this type of control, a decision-maker determines intervention times and instantaneous after-intervention states of the controlled process. By doing so, one can affect a continuous time phenomenon in a discrete time manner. Consequently, impulse control attracted considerable attention in the mathematical literature; see e.g.~\cite{Rob1978,BenLio1984,Dav1993} for classic contributions and~\cite{PalSte2010,BayEmm2013,DufPiu2016,MenRob2018} for more recent results. In addition to generic mathematical properties, impulse control problems were studied with reference to specific applications including i.a. controlling exchange rates, epidemics, and portfolios with transaction costs; see e.g.~\cite{Kor1999,RunYas2018,PiuPla2020} and references therein.

When looking for an optimal impulse control strategy, one must decide on the optimality criterion. Recently, considerable attention was paid to the so-called risk-sensitive functional given, for any $\gamma\in \mathbb{R}$, by
\begin{equation}\label{eq:RSC}
    \mu^\gamma(Z):=\begin{cases}
    \frac{1}{\gamma}\ln \mathbb{E}[\exp(\gamma Z)], & \gamma\neq 0,\\
    \mathbb{E}[Z], & \gamma =0,
    \end{cases}
\end{equation}
where $Z$ is a (random) payoff corresponding to a chosen control strategy; see~\cite{HowMat1972} for a seminal contribution. This functional with $\gamma=0$ corresponds to the usual linear criterion and the case $\gamma<0$ is associated with risk-averse preferences; see~\cite{BiePli2003} for a comprehensive overview. Also, the functional with $\gamma>0$ could be linked to the asymptotics of the power utility function; see~\cite{Ste2011b} for details. Recent comprehensive discussion on the long-run version with $\mu^\gamma$ could be found in~\cite{BisBor2023}. We refer also to~\cite{Pha2015} and references therein for a discussion on the connection between~\eqref{eq:RSC} and the duality of the large deviations-based criteria.

In this paper we focus on the use of the functional $\mu^\gamma$  with $\gamma>0$. More specifically, we consider the impulse control problem for some continuous time Markov process and construct a solution to the associated Bellman equation which  characterises an optimal impulse control strategy. To do this, we study the family of impulse control problems in bounded domains and then extend the analysis to the generic locally compact state space. This idea was used in~\cite{AraBis2018}, where PDEs techniques were applied to obtain the characterisation of the controlled diffusions in the risks-sensitive setting. A similar approximation for the the average cost per unit time problem was considered in~\cite{Ste2022}. 

The main contribution of this paper is a construction of a solution to the Bellman equation associated with the problem, see Theorem~\ref{th:Bellman_existence} for details. It should be noted that we get a bounded solution even though the state space could be unbounded and we assume virtually no ergodicity conditions for the uncontrolled process. Also, note that present results for $\gamma>0$ complement our recent findings on the impulse control with the risk-averse preferences; see~\cite{PitSte2019} for the dyadic case and~\cite{JelPitSte2019b} for the continuous time framework. Nevertheless, it should be noted that the techniques for $\gamma<0$ and $\gamma>0$ are substantially different and it is not possible to directly transform the results in one framework to the other; see e.g.~\cite{Nag2007b,JelPitSte2019a} for further discussion. 

The structure of this paper is as follows. In Section~\ref{S:preliminaries} we formally introduce the problem, discuss the assumptions and, in Theorem~\ref{th:ver_th}, provide a verification argument. Next, in Section~\ref{S:dyadic_bounded} we consider an auxiliary dyadic problem in a bounded domain and in Theorem~\ref{th:Krein_Rutman_use} we construct a solution to the corresponding Bellman equation. This is used in Section~\ref{S:dyadic} where we extend our analysis to the unbounded domain with the dyadic time-grid; see Theorem~\ref{th:w_delta_existence}. Next, in Section~\ref{S:asymptotics} we finally construct a solution to the Bellman equation for the original problem; see Theorem~\ref{th:Bellman_existence}. Finally, in Appendix~\ref{S:stopping} we discuss some properties of the optimal stopping problems that are used in this paper.


\section{Preliminaries}\label{S:preliminaries}

Let $X=(X_t)_{t\geq 0}$ be a continuous time standard Feller--Markov process on a filtered probability space $(\Omega, \mathcal{F}, (\mathcal{F}_t), \mathbb{P})$. The process $X$ takes values in a locally compact separable metric space $E$ endowed with a metric $\rho$ and the Borel $\sigma$-field $\mathcal{E}$. With any $x\in E$ we associate a probability measure $\mathbb{P}_x$ describing the evolution of the process $X$ starting in $x$; see Section 1.4 in~\cite{Shi1978} for details. Also, we use $\mathbb{E}_x$, $x\in E$, and  $P_t(x,A):=\mathbb{P}_x[X_t\in A]$, $t\geq 0$, $x\in E$, $A\in \mathcal{E}$, for the corresponding expectation operator and the transition probability, respectively. By $\mathcal{C}_b(E)$ we denote the family of continuous bounded real-valued functions on $E$. Also, to ease the notation, by $\mathcal{T}$, $\mathcal{T}_x$, and $\mathcal{T}_{x,b}$ we denote the families of stopping times, $\mathbb{P}_x$ a.s. finite stopping times, and $\mathbb{P}_x$ a.s. bounded stopping times, respectively. Also, for any $\delta>0$, by $\mathcal{T}^\delta\subset\mathcal{T}$, $\mathcal{T}_x^\delta\subset \mathcal{T}_x$, and $\mathcal{T}_{x,b}^\delta\subset \mathcal{T}_{x,b}$, we denote the respective subfamilies of dyadic stopping times, i.e. those taking values in the set $\{0,\delta, 2\delta, \ldots\}\cup \{\infty\}$.


Throughout this paper we fix some compact $U\subseteq E$ and we assume that a decision-maker is allowed to shift the controlled process to $U$. This is done with the help of an impulse control strategy, i.e. a sequence $V:=(\tau_i,\xi_i)_{i=1}^\infty$, where $(\tau_i)$ is an increasing sequence of stopping times and $(\xi_i)$ is a sequence of $\mathcal{F}_{\tau_i}$-measurable after-impulse states with values in $U$. With any starting point $x\in E$ and a strategy $V$ we associate a probability measure $\mathbb{P}_{(x,V)}$ for the controlled process $Y$. Under this measure, the process starts at $x$ and follows its usual (uncontrolled) dynamics up to the time $\tau_1$. Then, it is immediately shifted to $\xi_1$ and starts its evolution again, etc. More formally, we consider a countable product of filtered spaces $(\Omega, \mathcal{F}, (\mathcal{F}_t))$ and a coordinate process $(X_t^1, X_t^2, \ldots)$. Then, we define the controlled process $Y$ as $Y_t:=X_t^i$, $t\in [\tau_{i-1},\tau_i)$ with the convention $\tau_0\equiv 0$. Under the measure $\mathbb{P}_{(x,V)}$ we get $Y_{\tau_i}=\xi_i$; we refer to Chapter V in~\cite{Rob1978} for the construction details; see also Appendix in~\cite{Chr2014} and Section 2 in~\cite{Ste1982}. 
A strategy $V=(\tau_i,\xi_i)_{i=1}^\infty$ is called admissible if for any $x\in E$ we get $\mathbb{P}_{(x,V)}[\lim_{n\to\infty}\tau_n=\infty]=1$. The family of admissible impulse control strategies is denoted by $\mathbb{V}$. Also, note that, to simplify the notation, by $Y_{\tau_i^-}:=X_{\tau_i}^i$, $i\in \mathbb{N}_{*}$, we denote the state of the process right before the $i$th impulse (yet, possibly, after the jump).

In this paper we study the asymptotics of the impulse control problem given by
\begin{equation}\label{eq:imp_control_long_run3}
\sup_{V\in \mathbb{V}}J(x,V), \quad x\in E,
\end{equation}
where, for any $x\in E$ and $V\in \mathbb{V}$, we set
\begin{equation}\label{eq:J_funct}
J(x,V) :=\liminf_{T\to\infty} \frac{1}{T} \ln \mathbb{E}_{(x,V)}\left[e^{\int_0^T f(Y_s)ds+\sum_{i=1}^\infty 1_{\{\tau_i\leq T\}}c(Y_{\tau_i^-},\xi_i)}\right],
\end{equation}
with $f$ denoting the \textit{running cost function} and $c$ being the \textit{shift-cost function}, respectively. Note that this could be seen as a long-run standardised version of the functional~\eqref{eq:RSC} with $\gamma>0$ applied to the impulse control framework. Here, the standardisation refers to the fact that we do not use directly the parameter $\gamma$ (apart from its sign). Also, the problem is of the long-run type, i.e. the utility is averaged over time which improves the stability of the results.

The analysis in this paper is based on the approximation of the problem in a bounded domain. Thus, we fix a sequence $(B_m)_{m\in \mathbb{N}}$ of compact sets satisfying $B_m\subset B_{m+1}$ and $E=\bigcup_{m=0}^\infty B_m$. Also, we assume that $U\subset B_0$. Next, we assume the following conditions.

\begin{enumerate}
\item[(\namedlabel{A1}{$\mathcal{A}1$})] (Cost functions). The map $f:E\mapsto \mathbb{R}_{-}$ is a continuous and bounded. Also, the map $c:E\times U \mapsto \mathbb{R}_{-}$ is  continuous, bounded, and strictly non-positive, and satisfies the triangle inequality, i.e. for some $c_0<0$, we have
\[
0>c_0\geq c(x,\xi)\geq c(x,\eta)+c(\eta,\xi), \quad x\in E,\, \xi,\eta\in U.
\]
Also, we assume that $c$ satisfies the \textit{uniform limit at infinity} condition
\begin{equation}\label{eq:unif_c2}
\lim_{\Vert x\Vert, \Vert y\Vert \to \infty}\sup_{\xi \in U} |c(x,\xi)-c(y,\xi)|=0.
\end{equation}
\item[(\namedlabel{A2}{$\mathcal{A}2$})] (Transition probability continuity). For any $t$, the transition probability $P_t$ is continuous with respect to the total variation norm, i.e. for any sequence $(x_n)\subset E$ converging to $x\in E$, we get
\[
\lim_{n\to\infty}\sup_{A\in \mathcal{E}}|P_t(x_n,A)-P_t(x,A)|=0.
\]
\item[(\namedlabel{A3}{$\mathcal{A}3$})] (Distance control). For any compact set $\Gamma\subset E$, $t_0>0$, and $r_0>0$, we have
\begin{equation}\label{Co2}
\lim_{r\to\infty}M_{\Gamma}(t_0,r)=0,\qquad \lim_{t\to 0} M_{\Gamma}(t,r_0) =0,
\end{equation}
where $M_{\Gamma}(t,r):= \sup_{x\in \Gamma} \mathbb{P}_x[\sup_{s\in [0,t]} \rho(X_s,x)\geq r]$, $t,r>0$.
\item[(\namedlabel{A4}{$\mathcal{A}4$})] (Process irreducibility). For any $m\in \mathbb{N}$, $x\in B_m$, $\delta>0$, and any open set $\mathcal{O}\subset B_m$, we have
\[
\mathbb{P}_x\left[\cup_{i=1}^\infty \{X_{i\delta}\in \mathcal{O}\}\right]=1.
\]
Also, we assume that for any $x\in E$, $\delta>0$, and $m\in \mathbb{N}$, we have
\begin{equation}
    \mathbb{P}_x[\tau_{B_m}<\infty]=1
\end{equation}
where $\tau_{B_m}:=\delta\inf\{k\in \mathbb{N}\colon X_{k\delta}\notin B_m\}$.
\end{enumerate}

Before we proceed, let us comment on these assumptions. First, note that~\eqref{A1} states typical cost-functions conditions. In particular, the non-positivity assumption for $f$ is merely a technical normalisation. Indeed, for a generic $\tilde{f}\in \mathcal{C}_b(E)$ we may set $f(\cdot):=\tilde{f}(\cdot)-\Vert \tilde{f}\Vert\leq 0$ to get
\begin{equation*}
J^{f}(x,V) = J^{\tilde{f}}(x,V)-\Vert \tilde{f}\Vert, \quad x\in E, \, V\in \mathbb{V},
\end{equation*}
where  $J^{f}$ denotes the version of the functional $J$ from~\eqref{eq:J_funct} corresponding to the running cost function $f$.

Second, Assumption~\eqref{A2} states that the transition probabilities $\mathbb{P}_t(x,\cdot)$ are continuous with respect to the total variation norm. Note that this directly implies that the transition semigroup associated to $X$ is strong Feller, i.e. for any  $t>0$ and a  bounded measurable map $h\colon E\mapsto \mathbb{R}$, the map $x\mapsto \mathbb{E}_x[h(X_t)]$ is continuous and bounded.

Third, Assumption~\eqref{A3} quantifies distance control properties of the underlying process. It states that, for a fixed time horizon, the process with a high probability stays close to its starting point and, with a fixed radius, with a high probability it does not leave the corresponding ball with a sufficiently short time horizon. Note that these properties are automatically satisfied if the transition semigroup is $\mathcal{C}_0$-Feller; see Proposition 2.1 in~\cite{PalSte2010} and Proposition 6.4 in~\cite{BasSte2018} for details.

Finally, Assumption~\eqref{A4} states a form of the irreducibility of the process $X$. It requires that the process visits a sufficiently rich family of sets with unit probability.

To solve~\eqref{eq:imp_control_long_run3}, we show the existence of a solution to the impulse control Bellman equation, i.e. a function $w\in \mathcal{C}_b(E)$ and a constant $\lambda\in \mathbb{R}$ satisfying
\begin{equation}\label{eq:Bellman_long_run4}
w(x)=\sup_{\tau\in \mathcal{T}_{x,b}} \ln \mathbb{E}_x\left[\exp\left(\int_0^\tau (f(X_s)-\lambda)ds +Mw(X_\tau)\right)\right], \quad  x\in E,
\end{equation}
where the operator $M$ is given by 
\[
Mh(x):=\sup_{\xi\in U}(c(x,\xi)+h(\xi)), \quad h\in \mathcal{C}_b(E), \, x\in E.
\]

We start with a simple observation giving a lower bound for the constant $\lambda$ from~\eqref{eq:Bellman_long_run4}. To do this, we define the semi-group type by
\begin{equation}
\label{eq:r(f)}
r(f):=\lim_{t\to\infty} \frac{1}{t}\ln \sup_{x\in E} \mathbb{E}_x\left[e^{\int_0^t f(X_s)ds}\right];
\end{equation}
see e.g. Proposition 1 in~\cite{Ste1989} for a discussion on the properties of $r(f)$.
\begin{lemma}\label{lm:lambda_bound}
Let $(w,\lambda)$ be a solution to~\eqref{eq:Bellman_long_run4}. Then, we get $\lambda\geq r(f)$.
\end{lemma}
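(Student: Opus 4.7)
The plan is to exploit the fact that the supremum on the right-hand side of~\eqref{eq:Bellman_long_run4} is taken over all bounded stopping times, so in particular we may plug in a deterministic time $\tau \equiv t$ and then let $t\to\infty$ to recover $r(f)$ on the right-hand side.

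More concretely, fix $x\in E$ and $t>0$. Choosing the deterministic stopping time $\tau = t \in \mathcal{T}_{x,b}$ in~\eqref{eq:Bellman_long_run4} gives
\[
w(x)\;\geq\; -\lambda t \;+\; \ln \mathbb{E}_x\!\left[\exp\!\left(\int_0^t f(X_s)\,ds + Mw(X_t)\right)\right].
\]
Since $w\in \mathcal{C}_b(E)$ and $c$ is bounded by~\eqref{A1}, the function $Mw$ is bounded; in particular there exists a constant $K\geq 0$ (depending only on $\Vert w\Vert$ and $\Vert c\Vert$) such that $Mw\geq -K$ pointwise. Substituting this lower bound and rearranging yields
\[
\Vert w\Vert + K + \lambda t \;\geq\; w(x) + K + \lambda t \;\geq\; \ln \mathbb{E}_x\!\left[\exp\!\left(\int_0^t f(X_s)\,ds\right)\right].
\]

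The right-hand side of this inequality does not depend on the bound $\Vert w\Vert$, so I can take the supremum over $x\in E$ inside the logarithm to obtain
\[
\Vert w\Vert + K + \lambda t \;\geq\; \ln \sup_{x\in E} \mathbb{E}_x\!\left[e^{\int_0^t f(X_s)\,ds}\right].
\]
Dividing by $t$ and letting $t\to\infty$, the left-hand side tends to $\lambda$ while the right-hand side tends to $r(f)$ by the very definition~\eqref{eq:r(f)}, which gives the claimed inequality $\lambda \geq r(f)$.

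There is essentially no hard step here: the bulk of the argument is the substitution of a deterministic stopping time into the sup, and the only ingredients beyond that are the boundedness of $w$ and of $Mw$ (which follow trivially from $w\in \mathcal{C}_b(E)$ and assumption~\eqref{A1}) and the existence of the limit in~\eqref{eq:r(f)}. If anything required care, it would be checking that $r(f)$ is well-defined as a genuine limit rather than merely a $\limsup$, but this is guaranteed by the submultiplicativity of $t\mapsto \sup_x \mathbb{E}_x[e^{\int_0^t f(X_s)ds}]$ (a Fekete-type argument, used also in~\cite{Ste1989}), and one could equally well argue with $\liminf$ on the right of our final inequality to sidestep the issue entirely.
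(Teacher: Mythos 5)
Your proof is correct and is essentially the same as the paper's: both substitute the deterministic stopping time $\tau\equiv t$ into the supremum in~\eqref{eq:Bellman_long_run4}, use the boundedness of $w$ and $Mw$, take the supremum over $x\in E$, divide by $t$, and let $t\to\infty$. The only cosmetic difference is that the paper phrases the conclusion as $0\geq r(f-\lambda)$ rather than isolating the $-\lambda t$ term explicitly as you do.
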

\begin{proof}
From~\eqref{eq:Bellman_long_run4}, for any $T\geq 0$, we get
\[
w(x)\geq \ln \mathbb{E}_x\left[e^{\int_0^T (f(X_s)-\lambda)ds +Mw(X_T)}\right].
\]
Thus, using the boundedness of $w$ and $Mw$, we get
\[
\Vert w\Vert \geq \sup_{x\in E}\ln \mathbb{E}_x\left[e^{\int_0^T (f(X_s)-\lambda)ds }\right] -\Vert Mw\Vert .
\]
Consequently, dividing both hand-sides by $T$ and letting $T\to\infty$, we get $0\geq r(f-\lambda)$, which concludes the proof.
\end{proof}

Let us now link a solution to~\eqref{eq:Bellman_long_run4} with the optimal value and an optimal strategy for~\eqref{eq:imp_control_long_run3}. To ease the notation, we recursively define the strategy $\hat{V}:=(\hat\tau_i,\hat\xi_i)_{i=1}^\infty$ for $i\in \mathbb{N}\setminus\{0\}$ by
\begin{equation}\label{eq:ver_th:strategy}
\begin{cases}
\hat\tau_{i}&:=\inf\{t\geq \hat\tau_{i-1}\colon w(X_t^i)=Mw(X_t^i)\},\\
\hat\xi_{i}&:= \argmax_{\xi\in U}\left(c(X_{\hat\tau_{i}}^i,\xi)+w(\xi)\right)1_{\{\hat\tau_{i}<\infty\}}+\xi_0 1_{\{\hat\tau_{i}=\infty\}},
\end{cases}
\end{equation}
where $\hat\tau_0:=0$ and $\xi_0\in U$ is some fixed point. First, we show that $\hat{V}$ is a proper strategy.

\begin{proposition}\label{pr:adm}
The strategy $\hat{V}$ given by~\eqref{eq:ver_th:strategy} is admissible.
\end{proposition}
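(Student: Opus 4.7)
The plan is to iterate the Bellman equation~\eqref{eq:Bellman_long_run4} along $\hat V$ to produce a martingale identity, and then to use the strict negativity $c\le c_0<0$ of the shift cost to rule out accumulation of impulses.

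First, I would define the process
\[
M_t := \exp\!\Bigl(\int_0^t(f(Y_s)-\lambda)\,ds + \sum_{i\colon\hat\tau_i\le t} c(Y_{\hat\tau_i^-},\hat\xi_i) + w(Y_t)\Bigr)
\]
and show that $M$ is a non-negative $\mathbb{P}_{(x,\hat V)}$-martingale up to each $\hat\tau_n$. The key observation is that the jump of $M$ at each impulse time $\hat\tau_i$ vanishes: by the definition of $\hat\tau_i$ one has $w(Y_{\hat\tau_i^-})=Mw(Y_{\hat\tau_i^-})$, and by the definition of $\hat\xi_i$ as an $\argmax$ one has $Mw(Y_{\hat\tau_i^-})=c(Y_{\hat\tau_i^-},\hat\xi_i)+w(\hat\xi_i)$, so the increments of $\sum c$ and of $w(Y_\cdot)$ cancel. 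Between impulses, $Y$ follows the uncontrolled dynamics, and the Snell-envelope characterisation of $w$ provided by the optimal-stopping results of Appendix~\ref{S:stopping} (together with the fact that $\hat\tau_i$ is the first entry into $\{w=Mw\}$) makes $M$ a martingale. Iterating via the strong Markov property at every $\hat\tau_i$, one gets
\[
\mathbb{E}_{(x,\hat V)}\bigl[M_{\hat\tau_n\wedge T}\bigr]=e^{w(x)},\qquad n\in\mathbb{N},\ T>0.
\]

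Next, I would pass $n\to\infty$ for fixed $T$ in this identity, writing $\tau_\infty:=\lim_n\hat\tau_n$. On $\{\tau_\infty\le T\}$, the time integral and $w(Y_{\hat\tau_n})$ remain bounded while $\sum_{i=1}^n c(Y_{\hat\tau_i^-},\hat\xi_i)\le nc_0\to -\infty$, forcing $M_{\hat\tau_n\wedge T}\to 0$; on $\{\tau_\infty>T\}$, $M_{\hat\tau_n\wedge T}$ eventually stabilises at $M_T$. Using the dominating bound $M_{\hat\tau_n\wedge T}\le e^{(\|f\|+|\lambda|)T+\|w\|}$ (valid since $\sum c\le 0$), bounded convergence produces
\[
e^{w(x)} = \mathbb{E}_{(x,\hat V)}\!\bigl[M_T\,\mathbf{1}_{\{\tau_\infty>T\}}\bigr],
\]
which, combined with the same bound on $M_T$, yields $\mathbb{P}_{(x,\hat V)}(\tau_\infty>T)\ge e^{w(x)-\|w\|-(\|f\|+|\lambda|)T}>0$ for every $T>0$.

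Finally, admissibility $\mathbb{P}_{(x,\hat V)}(\tau_\infty=\infty)=1$ should be extracted from the previous identity. Setting $p(y):=\mathbb{P}_{(y,\hat V)}(\tau_\infty=\infty)$, the strong Markov property at $\hat\tau_1$ gives the tower relation $p(x)=\mathbb{E}_{(x,\hat V)}[p(\hat\xi_1)]$, so $(p(\hat\xi_n))_n$ is a bounded $[0,1]$-valued martingale taking values in the compact set $U$; combined with the irreducibility assumption~\eqref{A4} and the compactness of $U$, this should force $p\equiv 1$. The principal obstacle of the proof is precisely this last upgrade: translating the per-$T$ positive lower bound into the full-measure conclusion $\{\tau_\infty=\infty\}$ via the Markov/irreducibility structure on the discrete impulse chain $(\hat\xi_n)$ is the most delicate step, and it is where assumption~\eqref{A4} enters decisively.
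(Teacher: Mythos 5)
Your first two steps are fine as far as they go (the martingale identity $e^{w(x)}=\mathbb{E}_{(x,\hat V)}[M_{\hat\tau_n\wedge T}]$ and the limit $e^{w(x)}=\mathbb{E}_{(x,\hat V)}[M_T\mathbf{1}_{\{\tau_\infty>T\}}]$ are essentially the computation in the proof of Theorem~\ref{th:ver_th}), but the argument does not close, and the gap is exactly where you flag it. What you obtain is $\mathbb{P}_{(x,\hat V)}(\tau_\infty>T)\ge e^{w(x)-\Vert w\Vert-(\Vert f\Vert+|\lambda|)T}$, a lower bound that decays exponentially in $T$; since $\mathbb{P}_{(x,\hat V)}(\tau_\infty=\infty)=\lim_{T\to\infty}\mathbb{P}_{(x,\hat V)}(\tau_\infty>T)$, this is compatible with $\mathbb{P}_{(x,\hat V)}(\tau_\infty=\infty)=0$ and proves nothing about admissibility. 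The proposed upgrade via $p(y):=\mathbb{P}_{(y,\hat V)}(\tau_\infty=\infty)$ is not a proof: the tower relation only says $p$ is harmonic for the impulse chain $(\hat\xi_n)$, and a bounded harmonic function on a compact set need not be identically $1$ (e.g.\ $p\equiv 1/2$ satisfies the same relation). Assumption~\eqref{A4} concerns the \emph{uncontrolled} process visiting open subsets of $B_m$ and gives no handle on the controlled chain $(\hat\xi_n)$; even a L\'evy $0$--$1$ law argument only identifies $\mathbf{1}_{\{\tau_\infty=\infty\}}$ as the a.s.\ limit of $p(\hat\xi_n)$ and cannot rule out $p<1$. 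A secondary issue: the martingale property you invoke rests on Proposition~\ref{pr:v_stop}, whose standing hypothesis is $r(f-\lambda)<0$, i.e.\ $\lambda>r(f)$, which Proposition~\ref{pr:adm} does not assume.

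The paper's proof is entirely different and does not use the Bellman/martingale structure at all beyond the identity $w(Y_{\hat\tau_i^-})=c(Y_{\hat\tau_i^-},\hat\xi_i)+w(\hat\xi_i)$ at impulse times. The key quantitative observation is that each impulse \emph{raises} $w$ by at least $-c_0>0$, while $w$ is bounded on the compact set $U$ with oscillation at most some $K$. Hence over $m$ consecutive impulses with $-(m-1)c_0/2>K$, the uncontrolled evolution between impulses must dissipate at least $-(m-1)c_0/2$ of $w$-value, which (by uniform continuity of $w$ near $U$) forces at least one inter-impulse displacement of size $\ge r$ for some fixed $r>0$. If the $\hat\tau_i$ accumulated at a finite time, the inter-impulse durations would tend to $0$, and Assumption~\eqref{A3} makes a displacement of size $\ge r$ in vanishing time an event of vanishing probability --- a contradiction. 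If you want to salvage your route, you would have to replace the final $0$--$1$ step with an argument of this kind; the multiplicative martingale by itself cannot detect the accumulation of impulses because the factor $e^{\sum c}$ simply tends to $0$ there.
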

\begin{proof}
To ease the notation, we define $N(0,T):=\sum_{i=1}^\infty 1_{\{\hat\tau_i\leq T\}}$, $T\geq 0$. We fix some $T> 0$ and $x\in E$, and show that we get
\begin{equation}\label{eq:lm:adm:0}
\mathbb{P}_{(x,\hat{V})}[N(0,T)=\infty]=0. 
\end{equation}

Recalling~\eqref{eq:ver_th:strategy}, on the event $A:=\{\lim_{i\to\infty}\hat\tau_i<+\infty\}$, for any $n\in \mathbb{N}$, $n\geq 1$, we get $w(X_{\hat\tau_n}^n)=Mw(X_{\hat\tau_n}^n)=c(X_{\hat\tau_n}^n,X_{\hat\tau_n}^{n+1})+w(X_{\hat\tau_n}^{n+1})$. Also, recalling that $c(x,\xi)\leq c_0<0$, $x\in E$, $\xi\in U$, for any $n\in \mathbb{N}$, $n\geq 1$, we have $w(X_{\hat\tau_n}^{n+1})-w(X_{\hat\tau_n}^{n})=-c(X_{\hat\tau_n}^{n},X_{\hat\tau_n}^{n+1})\geq -c_0>0$. Using this observation and Assumption~\eqref{A3}, we estimate the distance between consecutive impulses which will be used to prove~\eqref{eq:lm:adm:0}. More specifically, for any $k,m\in \mathbb{N}$, $k,m\geq 1$, we get
\begin{multline}\label{eq:lm:adm:1}
\sum_{n=k}^{k+m-2}(w(X_{\hat\tau_n}^{n+1})-w(X_{\hat\tau_{n+1}}^{n+1}))+(w(X_{\hat\tau_{k+m-1}}^{k+m})-w(X_{\hat\tau_k}^{k+1})) \\
= w(X_{\hat\tau_k}^{k+1})+\sum_{n=k+1}^{k+m-1}(w(X_{\hat\tau_n}^{n+1})-w(X_{\hat\tau_n}^n))-w(X_{\hat\tau_k}^{k+1})\\
= \sum_{n=k+1}^{k+m-1}(w(X_{\hat\tau_n}^{n+1})-w(X_{\hat\tau_n}^n))\geq -(m-1)c_0;
\end{multline}
it should be noted that the specific values for $k$ and $m$ will be determined later.
Using the continuity of $w$ we may find $K>0$ such that $\sup_{x,y\in U}(w(x)-w(y))\leq K$. Let $m\in \mathbb{N}$ be big enough to get $-(m-1)\frac{c_0}{2}>K$. Thus, noting that $X_{\hat\tau_{k+m-1}}^{k+m}, X_{\hat\tau_k}^{k+1}\in U$, we have $(w(X_{\hat\tau_{k+m-1}}^{k+m})-w(X_{\hat\tau_k}^{k+1}))\leq K<-(m-1)\frac{c_0}{2}$. Consequently, recalling~\eqref{eq:lm:adm:1}, on $A$, we get
\begin{equation}\label{eq:lm:adm:2}
\sum_{n=k}^{k+m-2}(w(X_{\hat\tau_n}^{n+1})-w(X_{\hat\tau_{n+1}}^{n+1}))\geq -(m-1)\frac{c_0}{2}. 
\end{equation}

Recalling the compactness of $U$ and the continuity of $w$ we may find $r>0$ such that for any $x\in U$ and $y\in E$ satisfying $\rho(x,y)<r$ we get $|w(x)-w(y)|<-\frac{c_0}{2}$. Let us now consider the family of events
\begin{equation}\label{eq:lm:adm:3}
B_k:=\bigcap_{n=k}^{k+m-2}\{\rho(X_{\hat\tau_n}^{n+1},X_{\hat\tau_{n+1}}^{n+1})<r\}, \quad k\in \mathbb{N},\, k\geq 1,
\end{equation}
and note that, for any $k\in \mathbb{N}$, $k\geq 1$, on $B_k\cap A$ we have $\sum_{n=k}^{k+m-2}(w(X_{\hat\tau_n}^{n+1})-w(X_{\hat\tau_{n+1}}^{n+1}))< -(m-1)\frac{c_0}{2}$. Thus, recalling~\eqref{eq:lm:adm:2}, for any $k\in \mathbb{N}$, $k\geq 1$, we get $\mathbb{P}_{(x_0,\hat{V})}[ B_k\cap A]=0$ and, in particular, we have 
\begin{equation}\label{eq:lm:adm:3.1}
\mathbb{P}_{(x_0,\hat{V})}[B_k\cap\{N(0,T)=\infty\}]=0.
\end{equation}

Let us now show that $\limsup_{k\to\infty}\mathbb{P}_{(x_0,\hat{V})}[B_k^c\cap\{N(0,T)=\infty\}]=0$. 
Noting that $\{N(0,T)=\infty\}=\{\lim_{i\to\infty}\hat\tau_i\leq T\}$, for any $t_0>0$ and $k\in \mathbb{N}$, $k\geq 1$, we get
\begin{align}\label{eq:lm:admis:1}
&\mathbb{P}_{(x_0,\hat{V})}\left[B_k^c\cap\{N(0,T)=\infty\}\right] \nonumber\\
&\phantom{,}\leq \mathbb{P}_{(x_0,\hat{V})}\left[\left(\bigcup_{n=k}^{k+m-2}\{\rho(X_{\hat\tau_n}^{n+1},X_{\hat\tau_{n+1}}^{n+1})\geq r\}\cap \{\hat\tau_{n+1}-\hat\tau_n\leq t_0\}\right) \cap \{\lim_{i\to\infty}\hat\tau_i\leq T\}\right]\nonumber\\
&\phantom{,}\phantom{=}+\mathbb{P}_{(x_0,\hat{V})}\left[\left(\bigcup_{n=k}^{k+m-2}\{\rho(X_{\hat\tau_n}^{n+1},X_{\hat\tau_{n+1}}^{n+1})\geq r\}\cap \{\hat\tau_{n+1}-\hat\tau_n> t_0\}\right) \cap \{\lim_{i\to\infty}\hat\tau_i\leq T\}\right]\nonumber\\
 &\phantom{,}\leq \mathbb{P}_{(x_0,\hat{V})}\left[\bigcup_{n=k}^{k+m-2}\{\sup_{t\in [0,t_0]}\rho(X_{\hat\tau_{n}}^{n+1},X_{\hat\tau_n+t}^{n+1})\geq r\}\cap \{\lim_{i\to\infty}\hat\tau_i\leq T\}\right]\nonumber\\
&\phantom{,}\phantom{=}+\mathbb{P}_{(x_0,\hat{V})}\left[\bigcup_{n=k}^{k+m-2}\{\hat\tau_{n+1}-\hat\tau_n> t_0\}\cap \{\lim_{i\to\infty}\hat\tau_i\leq T\}\right].
\end{align}
Using Assumption~\eqref{A3}, for any $\varepsilon>0$, we may find $t_0>0$, such that
\begin{equation}\label{eq:lm:adm:3.5}
\sup_{x\in U} \mathbb{P}_x\left[\sup_{t\in [0,t_0]}\rho(X_0,X_t)\geq r\right]\leq \frac{\varepsilon}{m-1}.
\end{equation}
Thus, using the strong Markov property and noting that $X_{\hat\tau_n}^{n+1}\in U$, for any $k\in \mathbb{N}$, $k\geq 1$, we get
\begin{multline}\label{eq:lm:adm:4}
\mathbb{P}_{(x_0,\hat{V})}\left[\bigcup_{n=k}^{k+m-2}\{\sup_{t\in [0,t_0]}\rho(X_{\hat\tau_{n}}^{n+1},X_{\hat\tau_n+t}^{n+1})\geq r\}\cap \{\lim_{i\to\infty}\hat\tau_i\leq T\}\right]\\
\leq \sum_{n=k}^{k+m-2}\mathbb{P}_{(x_0,\hat{V})}\left[\{\sup_{t\in [0,t_0]}\rho(X_{\hat\tau_{n}}^{n+1},X_{\hat\tau_n+t}^{n+1})\geq r\}\cap \{\hat\tau_n\leq T\}\right]\\
=\sum_{n=k}^{k+m-2}\mathbb{P}_{(x_0,\hat{V})}\left[\{\hat\tau_n\leq T\} \mathbb{P}_{X_{\hat\tau_n}^{n+1}}\left[\sup_{t\in [0,t_0]}\rho(X_{0},X_{t})\geq r \right]\right]
\leq \varepsilon.
\end{multline}
Recalling that $\varepsilon>0$ was arbitrary, for any $k\in\mathbb{N}$, $k\geq 1$, we get
\begin{equation}\label{eq:lm:adm:5}
\mathbb{P}_{(x_0,\hat{V})}\left[\bigcup_{n=k}^{k+m-2}\{\sup_{t\in [0,t_0]}\rho(X_{\hat\tau_{n}}^{n+1},X_{\hat\tau_n+t}^{n+1})\geq r\}\cap \{\lim_{i\to\infty}\hat\tau_i\leq T\}\right]=0.
\end{equation}

Now, to ease the notation, let $C_k:=\bigcup_{n=k}^{\infty}\{\hat\tau_{n+1}-\hat\tau_n> t_0\}\cap \{\lim_{i\to\infty}\hat\tau_i\leq T\}$, $k\in \mathbb{N}$, $k\geq 1$, and note that $C_{k+1}\subset C_k$, $k\in \mathbb{N}$, $k\geq 1$. We show that
\[
\lim_{k\to\infty}\mathbb{P}_{(x_0,\hat{V})}\left[C_k\right]=0.
\]
For the contradiction, assume that $\lim_{k\to\infty}\mathbb{P}_{(x_0,\hat{V})}\left[C_k\right]>0$. Consequently, we get $\mathbb{P}_{(x_0,\hat{V})}\left[\bigcap_{k=1}^\infty C_k\right]>0$. Note that for any $\omega \in \bigcap_{k=1}^\infty C_k$ we have $\lim_{i\to\infty}\hat\tau_i(\omega)\leq T$. In particular, we may find $i_0\in \mathbb{N}$ such that for any $n\geq i_0$ we get $\hat\tau_{n+1}(\omega)-\hat\tau_n(\omega)\leq \frac{t_0}{2}$. This leads to the contradiction as from the fact that $\omega \in \bigcap_{k=1}^\infty C_k$ we also get
\[
\omega \in \bigcap_{k=1}^\infty \bigcup_{n=k}^{\infty}\{\hat\tau_{n+1}-\hat\tau_n> t_0\}\subset \bigcup_{n=i_0}^{\infty}\{\hat\tau_{n+1}-\hat\tau_n> t_0\}.
\]
Consequently, we get $\lim_{k\to\infty}\mathbb{P}_{(x_0,\hat{V})}\left[C_k\right]=0$ and, in particular, we get
\[
\limsup_{k\to\infty}\mathbb{P}_{(x_0,\hat{V})}\left[\bigcup_{n=k}^{k+m-2}\{\hat\tau_{n+1}-\hat\tau_n> t_0\}\cap \{\lim_{i\to\infty}\hat\tau_i\leq T\}\right]\leq \lim_{k\to\infty}\mathbb{P}_{(x_0,\hat{V})}\left[C_k\right]=0.
\]
Hence, recalling~\eqref{eq:lm:admis:1} and~\eqref{eq:lm:adm:5}, we get
\[
\limsup_{k\to\infty}\mathbb{P}_{(x_0,\hat{V})}\left[B_k^c\cap\{N(0,T)=\infty\}\right]=0.
\]
Thus, recalling~\eqref{eq:lm:adm:3.1}, for any $k\in \mathbb{N}$, $k\geq 1$, we obtain
\[
\mathbb{P}_{(x_0,\hat{V})}\left[N(0,T)=\infty\right]=\mathbb{P}_{(x_0,\hat{V})}\left[B_k^c\cap\{N(0,T)=\infty\}\right],
\]
and letting $k\to\infty$, we conclude the proof of~\eqref{eq:lm:adm:0}.
\end{proof}

Now, we show the verification result linking~\eqref{eq:Bellman_long_run4} with the optimal value and an optimal strategy for~\eqref{eq:imp_control_long_run3}.

\begin{theorem}\label{th:ver_th}
Let $(w,\lambda)$ be a solution to~\eqref{eq:Bellman_long_run4} with $\lambda>r(f)$. Then, we get
\[
\lambda = \sup_{V\in \mathbb{V}} J(x,V) = J(x,\hat{V}), \quad x\in E,
\]
where the strategy $\hat{V}$ is given by~\eqref{eq:ver_th:strategy}.
\end{theorem}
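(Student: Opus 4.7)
My plan is to split the verification into an upper bound $J(x,V)\le \lambda$ for every admissible $V\in\mathbb{V}$ and a matching lower bound $J(x,\hat V)\ge \lambda$. Combined with $\hat V\in\mathbb{V}$ from Proposition~\ref{pr:adm}, these two bounds yield the theorem.

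For the upper bound I would iterate the Bellman inequality along the impulse times of $V=(\tau_i,\xi_i)\in\mathbb{V}$. Applying \eqref{eq:Bellman_long_run4} at $x$ with the bounded stopping time $\tau_1\wedge T$ gives
\[
e^{w(x)} \ge \mathbb{E}_x\bigl[e^{\int_0^{\tau_1\wedge T}(f(X_s)-\lambda)ds + Mw(X_{\tau_1\wedge T})}\bigr].
\]
On $\{\tau_1\le T\}$ I would use the definition of $M$ together with $\hat\xi_1\in U$ to estimate $Mw(Y_{\tau_1^-})\ge c(Y_{\tau_1^-},\xi_1)+w(\xi_1)=c(Y_{\tau_1^-},\xi_1)+w(Y_{\tau_1})$, condition on $\mathcal{F}_{\tau_1}$, and apply \eqref{eq:Bellman_long_run4} at $Y_{\tau_1}=\xi_1$ with the bounded stopping time $(\tau_2-\tau_1)\wedge(T-\tau_1)$. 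Iterating this procedure $n$ times and sending $n\to\infty$, which is legitimate since admissibility of $V$ ensures $\tau_n\to\infty$ $\mathbb{P}_{(x,V)}$-almost surely, I would arrive at
\[
e^{w(x)} \ge \mathbb{E}_{(x,V)}\bigl[e^{\int_0^T(f(Y_s)-\lambda)ds + \sum_{i=1}^\infty 1_{\{\tau_i\le T\}}c(Y_{\tau_i^-},\xi_i) + Mw(Y_T)}\bigr].
\]
Since $Mw$ is bounded (by boundedness of $w$ and $c$), taking logarithms, dividing by $T$, and passing to $\liminf_{T\to\infty}$ delivers $J(x,V)\le \lambda$.

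For the lower bound along $\hat V$ I would run the same iteration, but now with equality at each step. The equality in the $M$-operator comes for free from the choice $\hat\xi_i\in\argmax_{\xi\in U}(c(Y_{\hat\tau_i^-},\xi)+w(\xi))$ in \eqref{eq:ver_th:strategy}. The equality in the Bellman relation is precisely where I would call on the optimal stopping material of Appendix~\ref{S:stopping}: the hitting time $\hat\tau_1=\inf\{t\ge 0\colon w(X_t)=Mw(X_t)\}$ is an optimal stopping time in \eqref{eq:Bellman_long_run4} at $x$, and the exponential $e^{w(X_{t\wedge\hat\tau_1})+\int_0^{t\wedge\hat\tau_1}(f(X_s)-\lambda)ds}$ is a true $\mathbb{P}_x$-martingale. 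The standing assumption $\lambda>r(f)$ enters at this step by precluding the degenerate regime in which never stopping would be preferable, so that the hitting time is genuinely optimal. Iterating the resulting equality across $\hat\tau_2,\hat\tau_3,\ldots$ by the strong Markov property and admissibility of $\hat V$, I would obtain
\[
e^{w(x)} = \mathbb{E}_{(x,\hat V)}\bigl[e^{\int_0^T(f(Y_s)-\lambda)ds + \sum_{i=1}^\infty 1_{\{\hat\tau_i\le T\}}c(Y_{\hat\tau_i^-},\hat\xi_i) + \phi(Y_T)}\bigr]
\]
for a bounded boundary function $\phi$ (equal to $w$ on $\{w>Mw\}$ and to $Mw$ on $\{w=Mw\}$). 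Dividing by $T$ and letting $T\to\infty$ then produces $J(x,\hat V)\ge \lambda$, which combines with the upper bound to close the argument.

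The main obstacle I anticipate is the careful bookkeeping in the truncated iteration: each finite-$n$ step of the recursion produces a residual boundary term of the form $Mw(Y_T)$ (or $\phi(Y_T)$) on $\{\tau_{n-1}\le T<\tau_n\}$, and one has to verify that sending $n\to\infty$ merges these residuals into the single boundary term above. Admissibility of $V$ and $\hat V$ (together with the dominated convergence theorem applied to the non-negative exponential integrands) is the right tool, because it rules out the accumulation of infinitely many impulses before $T$; the uniform boundedness of $w$, $Mw$, $c$, and $f$ then absorbs the boundary contributions into an additive constant that is killed after division by $T$. Once this bookkeeping is in place, the rest of the argument is routine.
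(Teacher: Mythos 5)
Your proposal is correct and follows essentially the same route as the paper: iterate the Bellman relation along the impulse times (inequality with the supermartingale bound for arbitrary admissible $V$, equality via the optimality of the hitting time $\hat\tau_1$ and the martingale property from Proposition~\ref{pr:v_stop} for $\hat V$), pass to the limit in $n$ using admissibility, then take logarithms, divide by $T$, and let $T\to\infty$ so the bounded boundary terms vanish. The only cosmetic differences are that you keep $Mw(Y_T)$ rather than $w(Y_T)$ as the residual boundary term and invoke dominated convergence where the paper uses Fatou for the general-$V$ direction; neither affects the argument.
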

\begin{proof}
The proof is based on the argument from Theorem 4.4 in~\cite{JelPitSte2019b} thus we show only an outline. First, we show that $\lambda=J(x,\hat{V})$, $x\in E$, where the strategy $\hat{V}$ is given by~\eqref{eq:ver_th:strategy}. Let us fix $x\in E$. Then, combining the argument used in Lemma 7.1 in~\cite{BasSte2018} and Proposition~\ref{pr:v_stop}, we  get that the process 
\[
e^{\int_0^{\hat\tau_1\wedge T} (f(X_s^1)-\lambda )ds  +w(X^1_{\hat\tau_1\wedge T})}, \quad T\geq 0,
\]
is a $\mathbb{P}_{(x,\hat{V})}$-martingale. Noting that on the event $\{\hat\tau_{k+1}<T\}$ we get $w(X^{k+1}_{\hat\tau_{k+1}})=Mw(X^{k+1}_{\hat\tau_{k+1}})=c(X^{k+1}_{\hat\tau_{k+1}},\hat\xi_{k+1})+w(\hat\xi_{k+1})$, $k\in \mathbb{N}$, for any $n\in \mathbb{N}$ we recursively get
\begin{align}\label{eq:th:ver_th:1}
e^{w(x)} & = \E_{(x,\hat{V})} \left[ e^{\int_0^{\hat\tau_1\wedge T} (f(Y_s)-\lambda )ds  +w(X^1_{\hat\tau_1\wedge T})}\right] \nonumber\\
 & =\E_{(x,\hat{V})} \left[ e^{\int_0^{\hat\tau_1\wedge T} (f(Y_s)-\lambda )ds  +1_{\{\hat\tau_1< T\}} c(X^1_{\hat\tau_1},X^2_{\hat\tau_1})+1_{\{\hat\tau_1< T\}}w(X^2_{\hat\tau_1})+1_{\{\hat\tau_1\geq T\}} w(X_T^1) }\right] \nonumber\\
 & = \E_{(x,\hat{V})} \left[ e^{\int_0^{\hat\tau_n\wedge T} (f(Y_s)-\lambda )ds  +\sum_{i=1}^n 1_{\{\hat\tau_i< T\}} c(X^{i}_{\hat\tau_i},X^{i+1}_{\hat\tau_i})}\times\right. \nonumber\\
 & \phantom{=\E_{(x,\hat{V})} = =} \left. \times e^{\sum_{i=1}^n 1_{\{\hat\tau_{i-1}< T\leq \hat\tau_{i}\}} w(X_T^{i})+1_{\{\hat\tau_n<T\}} w(X_{\hat\tau_n}^{n+1})}\right].
\end{align}
Recalling Proposition~\ref{pr:adm} we get $\hat\tau_n\to\infty$ as $n\to\infty$. Thus, letting $n\to\infty$ in~\eqref{eq:th:ver_th:1} and using Lebesgue’s dominated convergence theorem we get
\[
e^{w(x)}=\E_{(x,\hat{V})} \left[ e^{\int_0^{T} (f(Y_s)-\lambda )ds  +\sum_{i=1}^\infty 1_{\{\hat\tau_i< T\}} c(X^{i}_{\hat\tau_i},X^{i+1}_{\hat\tau_i})+\sum_{i=1}^\infty 1_{\{\hat\tau_{i-1}< T\leq \hat\tau_{i}\}} w(X_T^{i})}\right].
\]
Thus, recalling the boundedness of $w$, taking the logarithm of both sides, dividing by $T$, and letting $T\to\infty$ we obtain
\[
\lambda = \liminf_{T\to\infty}\E_{(x,\hat{V})} \left[ e^{\int_0^{T} f(Y_s)ds  +\sum_{i=1}^\infty 1_{\{\hat\tau_i< T\}} c(X^{i}_{\hat\tau_i},X^{i+1}_{\hat\tau_i})}\right].
\]

Second, let us fix some $x\in E$ and an admissible strategy $V=(\xi_i,\tau_i)_{i=1}^\infty\in \mathbb{V}$. We show that $\lambda\geq J(x,V)$. Using the argument from Lemma 7.1 in~\cite{BasSte2018} and Proposition~\ref{pr:v_stop}, we get that the process 
\[
e^{\int_0^{\tau_1\wedge T} (f(X_s^1)-\lambda )ds  +w(X^1_{\tau_1\wedge T})}, \quad T\geq 0,
\]
is a $\mathbb{P}_{(x,V)}$-supermartingale. Noting that on the event $\{\tau_{k+1}<T\}$ we have
\[
w(X^{k+1}_{\tau_{k+1}})\geq Mw(X^{k+1}_{\tau_{k+1}})\geq c(X^{k+1}_{\tau_{k+1}},\xi_{k+1})+w(\xi_{k+1}), \quad k\in \mathbb{N},
\]
for any $n\in \mathbb{N}$ we recursively get
\begin{align}\label{eq:th:ver_th:2}
e^{w(x)} & \geq \E_{(x,{V})} \left[ e^{\int_0^{\tau_1\wedge T} (f(Y_s)-\lambda )ds  +w(X^1_{\tau_1\wedge T})}\right] \nonumber\\
 & \geq\E_{(x,{V})} \left[ e^{\int_0^{\tau_1\wedge T} (f(Y_s)-\lambda )ds  +1_{\{\tau_1< T\}} c(X^1_{\tau_1},X^2_{\tau_1})+1_{\{\tau_1< T\}}w(X^2_{\tau_1})+1_{\{\tau_1\geq T\}} w(X_T^1) }\right] \nonumber\\
 & \geq \E_{(x,{V})} \left[ e^{\int_0^{\tau_n\wedge T} (f(Y_s)-\lambda )ds  +\sum_{i=1}^n 1_{\{\tau_i< T\}} c(X^{i}_{\tau_i},X^{i+1}_{\tau_i})}\times\right. \nonumber\\
 & \phantom{=\E_{(x,\hat{V})} = =} \left. \times e^{\sum_{i=1}^n 1_{\{\tau_{i-1}< T\leq \tau_{i}\}} w(X_T^{i})+1_{\{\tau_n<T\}} w(X_{\tau_n}^{n+1})}\right].
\end{align}
Recalling the admissibility of $V$, we get $\tau_n\to\infty$ as $n\to\infty$. Thus, letting $n\to\infty$ in~\eqref{eq:th:ver_th:2} and using Fatou's lemma, we get
\[
e^{w(x)}\geq \E_{(x,{V})} \left[ e^{\int_0^{T} (f(Y_s)-\lambda )ds  +\sum_{i=1}^\infty 1_{\{\tau_i< T\}} c(X^{i}_{\tau_i},X^{i+1}_{\tau_i})+\sum_{i=1}^n 1_{\{\tau_{i-1}< T\leq \tau_{i}\}} w(X_T^{i})}\right].
\]
Thus, taking the logarithm of both sides, dividing by $T$, and letting $T\to\infty$, we get
\[
\lambda \geq \liminf_{T\to\infty}\E_{(x,{V})} \left[ e^{\int_0^{T} f(Y_s)ds  +\sum_{i=1}^\infty 1_{\{\tau_i< T\}} c(X^{i}_{\tau_i},X^{i+1}_{\tau_i})}\right],
\]
which concludes the proof.
\end{proof}

In the following sections we construct a solution to~\eqref{eq:Bellman_long_run4}. In the construction we approximate the underlying problem using the dyadic time-grid. Also, we consider a version of the problem in the bounded domain.

\section{Dyadic impulse control in a bounded set}\label{S:dyadic_bounded}

In this section we consider a version of~\eqref{eq:imp_control_long_run3} with a dyadic-time-grid and obligatory impulses when the process leaves some compact set. In this way, we construct a solution to the bounded-domain dyadic counterpart of~\eqref{eq:Bellman_long_run4}. More specifically, let us fix some $\delta>0$ and $m\in \mathbb{N}$. We show the existence of a map $w_{\delta}^m\in \mathcal{C}_b(B_m)$ and a constant $\lambda_\delta^m\in \mathbb{R}$ satisfying
\begin{equation}\label{eq:w_delta_m:stopping_1step}
    w_{\delta}^m(x)=\sup_{\tau\in \mathcal{T}_{x,b}^{\delta}}\ln\mathbb{E}_x\left[e^{\int_0^{\tau\wedge \tau_{B_m}} (f(X_s)-\lambda_\delta^m )ds+Mw_\delta^m(X_{\tau\wedge \tau_{B_m}})}\right], \quad x\in B_m.
\end{equation}
In fact, we start with the analysis of an associated one-step equation. More specifically, we show the existence of a constant $\lambda_{\delta}^m\in \mathbb{R}$ and a map $w_{\delta}^m\in \mathcal{C}_b(B_m)$ satisfying
\begin{align}\label{eq:w_delta_m:2}
    w_{\delta}^m(x) &= \max\left(\ln\mathbb{E}_x\left[e^{\int_0^\delta (f(X_s)-\lambda_\delta^m)ds+1_{\{X_{\delta}\in B_m\}}w_{\delta}^m(X_{\delta})+1_{\{X_{\delta}\notin B_m\}}Mw_{\delta}^m(X_{\delta})}\right]\right.,\nonumber\\
    &\phantom{==} M w_{\delta}^m(x)\Big), \quad x\in B_m,\nonumber\\
    w_{\delta}^m(x) &= Mw_{\delta}^m(x), \quad x\notin B_m;
\end{align}
see Theorem~\ref{th:Krein_Rutman_use} for details. Also, note that we link~\eqref{eq:w_delta_m:2} with~\eqref{eq:w_delta_m:stopping_1step} in Theorem~\ref{th:w_delta_m:stopping}.

\begin{theorem}\label{th:Krein_Rutman_use}
There exists a constant $\lambda_{\delta}^m>0$ and a map $w_{\delta}^m\in \mathcal{C}_b(B_m)$ such that~\eqref{eq:w_delta_m:2} is satisfied and we get $\sup_{\xi\in U} w^m_\delta(\xi)=0$.
\end{theorem}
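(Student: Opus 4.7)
The strategy is to reformulate \eqref{eq:w_delta_m:2} as a nonlinear eigenvalue problem and obtain the pair $(\lambda_\delta^m, w_\delta^m)$ via a Krein--Rutman argument for a parameterised operator, combined with an intermediate value step to fix the scaling.

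\emph{Step 1 (Multiplicative reformulation).} Set $v := e^{w_\delta^m}$, $\alpha := e^{\lambda_\delta^m \delta}$, and $Nv(x) := \sup_{\xi\in U} e^{c(x,\xi)}v(\xi)$. For $v\in \mathcal{C}(B_m)_+$ extend it to $E$ by $\tilde v = v$ on $B_m$ and $\tilde v = Nv$ on $E \setminus B_m$, and let $(Lu)(x) := \mathbb{E}_x[e^{\int_0^\delta f(X_s)\,ds}u(X_\delta)]$. Then \eqref{eq:w_delta_m:2} is equivalent to the nonlinear fixed-point equation
\[
v(x) = \max\bigl(\alpha^{-1}(L\tilde v)(x),\ Nv(x)\bigr), \quad x\in B_m,
\]
and the normalisation $\sup_{\xi\in U}w_\delta^m(\xi)=0$ becomes $\sup_{\xi\in U}v(\xi)=1$.

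\emph{Step 2 (Krein--Rutman for $F_\alpha$).} For each $\alpha>0$ consider the nonlinear operator $(F_\alpha v)(x) := \max\bigl(\alpha^{-1}(L\tilde v)(x),\, Nv(x)\bigr)$ on the positive cone $\mathcal{C}(B_m)_+$. It is positively $1$-homogeneous, monotone, continuous, and sends norm-bounded sets into equicontinuous ones: equi-continuity of $\{L\tilde v\}$ follows from the total-variation continuity of $P_\delta$ in \eqref{A2} (which yields the strong Feller property of $L$), and equi-continuity of $\{Nv\}$ from the continuity of $c$ and compactness of $U$ in \eqref{A1}. Thus $F_\alpha$ is compact, and a nonlinear Krein--Rutman theorem supplies a principal eigenpair $\mu(\alpha)>0$ and $v_\alpha \in \mathcal{C}(B_m)_+\setminus\{0\}$ with $F_\alpha v_\alpha = \mu(\alpha)v_\alpha$. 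Iterating the defining equation and invoking the irreducibility \eqref{A4} upgrades $v_\alpha$ to be strictly positive on $B_m$.

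\emph{Step 3 (Fixing the scaling).} Comparison arguments show that $\alpha \mapsto \mu(\alpha)$ is continuous and non-increasing, with $\lim_{\alpha \downarrow 0}\mu(\alpha)=+\infty$ (because $\alpha^{-1}L\tilde v \to \infty$ and $L$ is strictly positive) and $\limsup_{\alpha \uparrow \infty}\mu(\alpha) \leq e^{c_0}<1$ (because $F_\alpha v \to Nv$ pointwise and $\|Nv\|_\infty \leq e^{c_0}\|v\|_\infty$ by \eqref{A1}). The intermediate value theorem therefore delivers $\alpha^*>0$ with $\mu(\alpha^*)=1$, and a further comparison against a supersolution built from the obligatory impulse at $\tau_{B_m}$ together with \eqref{A4} pushes this past $\alpha^*>1$. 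Setting $w_\delta^m := \ln v_{\alpha^*}$, with $v_{\alpha^*}$ normalised so that $\sup_{\xi\in U}v_{\alpha^*}(\xi)=1$, and $\lambda_\delta^m := \delta^{-1}\ln\alpha^*>0$, the identity $F_{\alpha^*}v_{\alpha^*}=v_{\alpha^*}$ is precisely the multiplicative form of \eqref{eq:w_delta_m:2}.

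\emph{Main obstacle.} The crux of the argument is verifying the hypotheses of the nonlinear Krein--Rutman theorem in this functional setting---compactness and continuity of $F_\alpha$ on $\mathcal{C}(B_m)$, and most importantly strict positivity of $v_\alpha$ throughout $B_m$ (so that $\ln v_\alpha$ is bounded and continuous), which requires carefully propagating positivity from the impulse set $U$ using \eqref{A4}. A second delicate point is the strict lower bound $\alpha^* > 1$, which encodes that the scaling rate imposed by the obligatory impulses at $\tau_{B_m}$ strictly exceeds that of the un-impulsed evolution under $L$.
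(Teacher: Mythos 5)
Your route is genuinely different from the paper's. The paper applies the nonlinear Krein--Rutman theorem \emph{once}, to the $\alpha$-free operator $\tilde T_\delta^m h=\max(\tilde P_\delta^m h,\tilde M\tilde P_\delta^m h)$, reads $\lambda_\delta^m$ off from the resulting eigenvalue $\tilde\lambda_\delta^m$, and then converts the eigen-equation into the Bellman form~\eqref{eq:w_delta_m:2} using the triangle inequality for $c$ (which gives $\tilde M\tilde M g\le\tilde M g$, hence $\tilde M h=\tilde M(\tilde\lambda^{-1}\tilde P h)$). You instead parameterise by $\alpha$, extract a principal eigenvalue $\mu(\alpha)$ of $F_\alpha$ for each $\alpha$, and solve $\mu(\alpha^*)=1$ by the intermediate value theorem. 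Your formulation lands directly on the Bellman form (no triangle-inequality conversion needed), but it buys this at the cost of the IVT machinery, and that is where the real gap sits: Bonsall's theorem gives \emph{existence} of an eigenvalue of the nonlinear operator $F_\alpha$, not uniqueness, so ``$\mu(\alpha)$'' is not a well-defined function without choosing, say, the Collatz--Wielandt value $\mu(\alpha)=\sup\{t>0:\ F_\alpha v\ge tv\ \text{for some}\ v\ge0,\ v\ne0\}$; and its continuity, which you assert via unspecified ``comparison arguments,'' is exactly what the IVT needs --- a monotone $\mu$ alone could jump over $1$. (It is fillable: from $F_{\alpha'}v\le F_\alpha v\le(\alpha'/\alpha)F_{\alpha'}v$ for $\alpha\le\alpha'$ one gets $\mu(\alpha')\le\mu(\alpha)\le(\alpha'/\alpha)\mu(\alpha')$, but you must supply this and verify that the Collatz--Wielandt value is attained as an eigenvalue.)

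The second problem is your Step 3 claim $\alpha^*>1$, i.e.\ $\lambda_\delta^m>0$. This is false in general: since $f\le0$ and every obligatory impulse at $\tau_{B_m}$ (which is a.s.\ finite by~\eqref{A4}) incurs a cost $\le c_0<0$, the controlled value identified in Theorem~\ref{th:lambda_delta_m} satisfies $\lambda_\delta^m\le0$, so $\alpha^*=e^{\delta\lambda_\delta^m}\le1$. The ``$\lambda_\delta^m>0$'' in the theorem statement corresponds to what the paper actually proves, namely positivity of the \emph{multiplicative} eigenvalue $\tilde\lambda_\delta^m$ (so that $\lambda_\delta^m=\delta^{-1}\ln\tilde\lambda_\delta^m$ is a well-defined real number); in your scheme the analogous and sufficient fact is simply $\alpha^*>0$, which is automatic, whereas no ``supersolution comparison'' can deliver $\alpha^*>1$. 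The remaining ingredients of your proposal --- compactness of the operator via~\eqref{A2} and Arzel\`a--Ascoli, strict positivity of the eigenfunction on $B_m$ by iterating the equation and invoking~\eqref{A4}, and the normalisation $\sup_{\xi\in U}v(\xi)=1$ --- coincide with the paper's and are fine.
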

\begin{proof}
The idea of the proof is to use the Krein-Rutman theorem to get a positive eigenvalue with a non-negative eigenvector of the suitable operator. More specifically, we consider a cone of non-negative continuous and bounded functions $\mathcal{C}^+_b(B_m)\subset \mathcal{C}_b(B_m)$ and, for any $h\in \mathcal{C}^+_b(B_m)$, we define the operators
\begin{align*}
\tilde{M}h(x)&:=\sup_{\xi\in U} e^{c(x,\xi)}h(\xi), \quad x\in E,\\
\tilde{P}_{\delta}^m h(x)&:=\mathbb{E}_x\left[e^{\int_0^\delta f(X_s)ds}\left(1_{\{X_{\delta}\in B_m\}}h(X_{\delta})+1_{\{X_{\delta}\notin B_m\}}\tilde{M}h(X_{\delta})\right)\right], \quad x\in B_m, \\
\tilde{T}_{\delta}^m h(x)&:=\max\left(\tilde{P}_{\delta}^m h(x), \tilde{M} \tilde{P}_{\delta}^m h(x)\right), \quad x\in B_m.
\end{align*}
Now, we use the Krein-Rutman theorem to show that $\tilde{T}_{\delta}^m$ admits a positive eigenvalue and a non-negative eigenfunction; see Theorem 4.3 in~\cite{Bon1962} for details. We start with verifying the assumptions. First, note that $\tilde{T}_{\delta}^m$ is positively homogeneous, monotonic increasing, and we have
\[
\tilde{T}_{\delta}^m \mathbbm{1}(x)\geq e^{-\delta\Vert f\Vert-\Vert c\Vert }\mathbbm{1}(x), \quad x\in B_m,
\]
where $\mathbbm{1}$ denotes the function identically equal to $1$ on $B_m$. Also, using Assumption~\eqref{A2}, we get that $\tilde{T}_{\delta}^m$ transforms $\mathcal{C}^+_b(B_m)$ into itself and it is continuous with respect to the supremum norm. Let us now show that $\tilde{T}_{\delta}^m$ is in fact completely continuous. To see this, let $(h_n)_{n\in \mathbb{N}}\subset \mathcal{C}^+_b(B_m)$ be a bounded (by some constant $K>0$) sequence; using Arzel\`a-Ascoli Theorem we show that it is possible to find a convergent subsequence of $(\tilde{T}_{\delta}^m h_n)_{n\in \mathbb{N}}$. Note that, for any $n\in \mathbb{N}$, we get
\[
\Vert \tilde{T}_{\delta}^m h_n\Vert \leq e^{\delta\Vert f\Vert}K,
\]
hence $(\tilde{T}_{\delta}^m h_n)$ is uniformly bounded. Next, let us fix some $\varepsilon>0$, $x\in B_m$, and $(x_k)\subset B_m$ such that $x_k\to x$ as $k\to\infty$. Also, to ease the notation, for any $n\in \mathbb{N}$, we set $H_n(x):=1_{\{x\in B_m\}}h_n(x)+1_{\{x\notin B_m\}}\tilde{M}h_n(x)$, $x\in E$ and note that $H_n$ are measurable functions bounded by $2K$ uniformly in $n\in \mathbb{N}$. Then, for any $n,k\in \mathbb{N}$, we get
\begin{align}\label{eq:th:Krein-Rutman:1}
   | \tilde{T}_{\delta}^m h_n(x)-\tilde{T}_{\delta}^m h_n(x_k)|
   & \leq \left|\mathbb{E}_x\left[e^{\int_0^\delta f(X_s)ds}H_n(X_\delta)\right]  - \mathbb{E}_{x_k}\left[e^{\int_0^\delta f(X_s)ds}H_n(X_\delta)\right]\right| \nonumber\\
   &\phantom{=}+ |\tilde{M}\tilde{P}_{\delta}^m h_n(x)-\tilde{M}\tilde{P}_{\delta}^m h_n({x_k})|.
\end{align}
Also, using Assumption~\eqref{A1}, we may find $k\in \mathbb{N}$ big enough such that, for any  $n\in \mathbb{N}$, we obtain
\begin{equation}\label{eq:th:Krein-Rutman:2}
    |\tilde{M}\tilde{P}_{\delta}^m h_n(x)-\tilde{M}\tilde{P}_{\delta}^m h_n({x_k})|\leq e^{\delta\Vert f\Vert}K \sup_{\xi\in U} |e^{c(x,\xi)}-e^{c(x_k,\xi)}|\leq \frac{\varepsilon}{2}.
\end{equation}
Next, note that for any $u\in (0,\delta)$ and $n,k\in \mathbb{N}$, we get
\begin{align}\label{eq:th:Krein-Rutman:3}
    &\left|\mathbb{E}_x\left[e^{\int_0^\delta f(X_s)ds}H_n(X_\delta)\right]  - \mathbb{E}_{x_k}\left[e^{\int_0^\delta f(X_s)ds}H_n(X_\delta)\right]\right|  \nonumber\\
    & \phantom{\quad\quad}\leq \left|\mathbb{E}_x\left[\left(e^{\int_0^\delta f(X_s)ds}-e^{\int_u^\delta f(X_s)ds}\right)H_n(X_\delta)\right]\right| \nonumber\\ 
    & \phantom{\quad\quad =}+ \left|\mathbb{E}_{x_k}\left[\left(e^{\int_0^\delta f(X_s)ds}-e^{\int_u^\delta f(X_s)ds}\right)H_n(X_\delta)\right]\right|\nonumber\\
    & \phantom{\quad\quad =}+ \left|\mathbb{E}_{x_k}\left[e^{\int_u^\delta f(X_s)ds}H_n(X_\delta)\right]-\mathbb{E}_{x}\left[e^{\int_u^\delta f(X_s)ds}H_n(X_\delta)\right]\right|.
\end{align}
Also, using the inequality $|e^{y}-e^{z}|\leq e^{\max(y,z)}|y-z|$, $y,z\in \mathbb{R}$, we may find $u>0$ small enough such that, for any  $n,k\in \mathbb{N}$, we get
\begin{align}\label{eq:th:Krein-Rutman:4}
   \left|\mathbb{E}_{x_k}\left[\left(e^{\int_0^\delta f(X_s)ds}-e^{\int_u^\delta f(X_s)ds}\right)H_n(X_\delta)\right]\right|\leq 2Ke^{\delta \Vert f\Vert}u\Vert f\Vert\leq \frac{\varepsilon}{6}. 
\end{align}
Next, setting $F_n^u(x):=\mathbb{E}_x\left[e^{\int_0^{\delta-u}f(X_s)ds}H_n(X_{\delta-u})\right]$, $n\in \mathbb{N}$, $x\in E$, and using the Markov property combined with Assumption~\eqref{A2}, we may find $k\in \mathbb{N}$ big enough such that for any $n\in \mathbb{N}$, we get
\begin{multline*}
\left|\mathbb{E}_{x_k}\left[e^{\int_u^\delta f(X_s)ds}H_n(X_\delta)\right]-\mathbb{E}_{x}\left[e^{\int_u^\delta f(X_s)ds}H_n(X_\delta)\right]\right|\\
= |\mathbb{E}_{x_k}[F_n^u(X_u)]-\mathbb{E}_{x}[F_n^u(X_u)]|\\
\leq 2K e^{\delta \Vert f\Vert} \sup_{A\in \mathcal{E}}|P_u(x_k,A)-P_u(x,A)|\leq \frac{\varepsilon}{6}.
\end{multline*}
Thus, recalling~\eqref{eq:th:Krein-Rutman:3}--\eqref{eq:th:Krein-Rutman:4}, we get that for $k\in \mathbb{N}$ big enough and any $n\in \mathbb{N}$, we get $\left|\mathbb{E}_x\left[e^{\int_0^\delta f(X_s)ds}H_n(X_\delta)\right]  - \mathbb{E}_{x_k}\left[e^{\int_0^\delta f(X_s)ds}H_n(X_\delta)\right]\right| \leq \frac{\varepsilon}{2}$.
This combined with~\eqref{eq:th:Krein-Rutman:1}--\eqref{eq:th:Krein-Rutman:2} shows $
| \tilde{T}_{\delta}^m h_n(x)-\tilde{T}_{\delta}^m h_n(x_k)|\leq \varepsilon$
for $k\in \mathbb{N}$ big enough and any $n\in \mathbb{N}$, which proves the equicontinuity of the family $(\tilde{T}_{\delta}^m h_n)_{n\in \mathbb{N}}$. Consequently, using Arzel\`a-Ascoli, we may find a uniformly (in $x\in B_m$) convergent subsequence of $(\tilde{T}_{\delta}^m h_n)_{n\in \mathbb{N}}$ and the operator $\tilde{T}_{\delta}^m$ is  completely continuous. Thus, using the Krein-Rutman theorem we conclude that there exists a constant $\tilde\lambda_{\delta}^m>0$ and a non-zero map $h_{\delta}^m\in \mathcal{C}^+_b(B_m)$ such that
\begin{equation}\label{eq:th:Krein_Rutman_use:1}
    \tilde{T}_{\delta}^m h_{\delta}^m(x)=\tilde\lambda_{\delta}^m h_{\delta}^m(x), \quad x\in B_m.
\end{equation}
After a possible normalisation, we assume that $\sup_{\xi\in U}h_{\delta}^m(\xi)=1$. 

Let us now show that $h_{\delta}^m(x)>0$, $x\in B_m$. To see this, let us define $D:=e^{-\delta\Vert f\Vert} \frac{1}{\tilde\lambda_{\delta}^m}$ and let $\mathcal{O}_h\subset B_m$ be an open set such that
\begin{equation}\label{eq:O_h}
    \inf_{x\in O_h}h^m_{\delta}(x)>0;
\end{equation}
note that this set exists thanks to the continuity of $h_{\delta}^m$ and the fact that $h_{\delta}^m$ is non-zero. Next, using~\eqref{eq:th:Krein_Rutman_use:1}, we have
\[
h_{\delta}^m(x)\geq D\mathbb{E}_x\left[1_{\{X_\delta \in \mathcal{O}_h\}}h_{\delta}^m(X_\delta)+1_{\{X_\delta \in B_m\setminus\mathcal{O}_h\}}h_{\delta}^m(X_\delta)\right], \quad x\in B_m.
\]
Then, for any $n\in \mathbb{N}$, we inductively get
\begin{align*}
    h_{\delta}^m(x)&\geq D\mathbb{E}_x[1_{\{X_{\delta} \in \mathcal{O}_h \}}h_{\delta}^m(X_{\delta})] \\
    &\phantom{=}+\sum_{i=2}^n D^i\mathbb{E}_x\left[1_{\{X_{\delta} \in B_m\setminus \mathcal{O}_h,X_{2\delta} \in B_m\setminus \mathcal{O}_h, \ldots, X_{(i-1)\delta} \in B_m\setminus \mathcal{O}_h, X_{i\delta} \in \mathcal{O}_h \}}h_{\delta}^m(X_{i\delta})\right]\\
    &\phantom{=}+ D^n\mathbb{E}_x\left[1_{\{X_{\delta} \in B_m\setminus \mathcal{O}_h,X_{2\delta} \in B_m\setminus \mathcal{O}_h, \ldots, X_{i\delta} \in B_m\setminus\mathcal{O}_h\}}h_{\delta}^m(X_{n\delta})\right], \, x\in B_m.
\end{align*}
Thus, letting $n\to\infty$ and using Assumption~\eqref{A4} combined with~\eqref{eq:O_h}, we show $h_{\delta}^m(x)>0$ for any $x\in B_m$.

Next, we define $w^m_\delta(x):=\ln h^m_\delta(x)$, $x\in B_m$, and $\lambda^m_\delta:=\frac{1}{\delta}\ln \tilde\lambda^m_\delta$. Thus, from~\eqref{eq:th:Krein_Rutman_use:1}, we get that
the pair $(w^m_\delta, \lambda^m_\delta)$ satisfies 
\[
\tilde{T}_{\delta}^m e^{w_{\delta}^m}(x)=e^{\delta \lambda_{\delta}^m} e^{w_{\delta}^m(x)}, \quad x\in B_m, \quad \text{and}\quad \sup_{\xi\in U}w_{\delta}^m(\xi)=0.
\]
In fact, using Assumption~\eqref{A1} and the argument from Theorem 3.1 in~\cite{JelPitSte2019b}, we have 
\begin{align*}
    w_{\delta}^m(x) &= \max\left(\ln\mathbb{E}_x\left[e^{\int_0^\delta (f(X_s)-\lambda_\delta^m)ds+1_{\{X_{\delta}\in B_m\}}w_{\delta}^m(X_{\delta})+1_{\{X_{\delta}\notin B_m\}}Mw_{\delta}^m(X_{\delta})}\right]\right.,\nonumber\\
    &\phantom{==} M w_{\delta}^m(x)\Big), \quad x\in B_m.
\end{align*}
Finally, we extend the definition of $w^m_{\delta}$ to the full space $E$ by setting
\[
w^m_{\delta}(x):=Mw^m_{\delta}(x), \quad x\notin B_m;
\]
note that the definition is correct since, at the right-hand side, we need to evaluate $w^m_{\delta}$ only at the points from $U\subset B_0\subset B_m$ and this map is already defined there.
\end{proof}

As we show now, Equation~\eqref{eq:w_delta_m:2} may be linked to a specific martingale characterisation.

\begin{proposition}\label{pr:_delta_m:martingale}
Let $(w_{\delta}^m, \lambda^m_\delta)$ be a solution to~\eqref{eq:w_delta_m:2}. Then, for any $x\in B_m$, we get that the process
\[
z_{\delta}^m(n):=e^{\int_0^{(n\delta)\wedge \tau_{B_m}} (f(X_s)-\lambda_\delta^m )ds+w_\delta^m(X_{(n\delta)\wedge \tau_{B_m}})}, \quad n\geq 0,
\]
is a $\mathbb{P}_x$-supermartingale. Also, the process 
\[
z_{\delta}^m(n\wedge (\hat\tau^m_\delta/\delta)), \quad n\in \mathbb{N},
\]
is a $\mathbb{P}_x$-martingale, where $\hat\tau^m_\delta:=\delta\inf\{k\in \mathbb{N}\colon w_\delta^m(X_{k\delta})=M w_\delta^m(X_{k\delta})\}$.
\end{proposition}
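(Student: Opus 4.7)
The plan is to verify both martingale-type statements by a direct one-step computation of $\mathbb{E}_x[z_\delta^m(n+1)\mid \mathcal{F}_{n\delta}]$ via the Markov property at time $n\delta$, and then to compare with $z_\delta^m(n)$ using the Bellman relation~\eqref{eq:w_delta_m:2}. A useful preliminary observation is that because of the extension $w_\delta^m=Mw_\delta^m$ on $E\setminus B_m$, the integrand $1_{\{X_\delta\in B_m\}}w_\delta^m(X_\delta)+1_{\{X_\delta\notin B_m\}}Mw_\delta^m(X_\delta)$ appearing in~\eqref{eq:w_delta_m:2} is identically equal to $w_\delta^m(X_\delta)$. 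Writing
\[
G(y):=\mathbb{E}_y\bigl[e^{\int_0^\delta (f(X_s)-\lambda_\delta^m)ds+w_\delta^m(X_\delta)}\bigr],
\]
equation~\eqref{eq:w_delta_m:2} becomes $e^{w_\delta^m(x)}=\max(G(x),e^{Mw_\delta^m(x)})$ for $x\in B_m$. Thus $G(x)\le e^{w_\delta^m(x)}$ on $B_m$, with equality on the continuation region $\{w_\delta^m>Mw_\delta^m\}$.

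For the supermartingale claim, I split on the $\mathcal{F}_{n\delta}$-measurable event $\{\tau_{B_m}\le n\delta\}$. On that event both $z_\delta^m(n)$ and $z_\delta^m(n+1)$ are frozen at the value $e^{\int_0^{\tau_{B_m}}(f(X_s)-\lambda_\delta^m)ds+w_\delta^m(X_{\tau_{B_m}})}$, so there is nothing to check. On $\{\tau_{B_m}>n\delta\}$, since $\tau_{B_m}$ is dyadic we have $X_{n\delta}\in B_m$ and $(n+1)\delta\wedge\tau_{B_m}=(n+1)\delta$; the $\mathcal{F}_{n\delta}$-measurable prefix then factors out, and the Markov property at $n\delta$ applied to the remaining piece gives
\[
\mathbb{E}_x[z_\delta^m(n+1)\mid\mathcal{F}_{n\delta}]=e^{\int_0^{n\delta}(f(X_s)-\lambda_\delta^m)ds}\cdot G(X_{n\delta})=z_\delta^m(n)\cdot e^{-w_\delta^m(X_{n\delta})}G(X_{n\delta})\le z_\delta^m(n),
\]
as needed. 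Integrability is immediate from the boundedness of $f$ and $w_\delta^m$.

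For the martingale property of the stopped process $z_\delta^m(n\wedge(\hat\tau_\delta^m/\delta))$, note that on $\{n<\hat\tau_\delta^m/\delta\}$ we have $w_\delta^m(X_{k\delta})>Mw_\delta^m(X_{k\delta})$ for $k=0,\ldots,n$. Since $w_\delta^m=Mw_\delta^m$ off $B_m$, this forces $X_{k\delta}\in B_m$, hence $\tau_{B_m}>n\delta$, and $X_{n\delta}$ lies in the continuation region, so that the Bellman equation yields $G(X_{n\delta})=e^{w_\delta^m(X_{n\delta})}$. Repeating the calculation of the previous paragraph then gives exact equality $\mathbb{E}_x[z_\delta^m((n+1)\wedge(\hat\tau_\delta^m/\delta))\mid\mathcal{F}_{n\delta}]=z_\delta^m(n\wedge(\hat\tau_\delta^m/\delta))$, while on the complement $\{n\ge\hat\tau_\delta^m/\delta\}$ both stopped values trivially coincide. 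The main technical care is purely bookkeeping with the dyadic stopping times $\tau_{B_m}$ and $\hat\tau_\delta^m$ and the indicator identities that convert~\eqref{eq:w_delta_m:2} into a statement about the one-step kernel $G$; no subtle analytic obstacle is expected.
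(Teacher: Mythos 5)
Your proposal is correct and follows essentially the same route as the paper: a one-step computation of $\mathbb{E}_x[z_\delta^m(n+1)\mid\mathcal{F}_{n\delta}]$ via the Markov property, splitting on whether $\tau_{B_m}$ has already occurred, and invoking the inequality (resp.\ equality on the continuation region) implicit in~\eqref{eq:w_delta_m:2}. Your preliminary observation that the indicator expression collapses to $w_\delta^m(X_\delta)$ via the extension $w_\delta^m=Mw_\delta^m$ off $B_m$, together with the consequence $\hat\tau_\delta^m\le\tau_{B_m}$, is a neat streamlining of the same bookkeeping the paper carries out with the indicators written out explicitly.
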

\begin{proof}
To ease the notation, we show the proof only for $\delta=1$; the general case follows the same logic. Let us fix $m,n\in \mathbb{N}$ and $x\in B_m$. Then, using the fact $w_{1}^m(y)=Mw_{1}^m(y)$, $x\notin B_m$, and the inequality
\[
e^{w_{1}^m(y)} \geq \mathbb{E}_y\left[e^{\int_0^1 (f(X_s)-\lambda_1^m)ds+1_{\{X_{1}\in B_m\}}w_{1}^m(X_{1})+1_{\{X_{1}\notin B_m\}}Mw_{1}^m(X_{1})}\right], \quad y\in B_m,
\]
we have
\begin{align*}
    \mathbb{E}_x[z_{1}^m&(n+1)|\mathcal{F}_n]\\
    &=1_{\{\tau_{B_m}\leq n\}}e^{\int_0^{\tau_{B_m}} (f(X_s)-\lambda_1^m )ds+w_1^m(X_{ \tau_{B_m}})}\\
    &\phantom{=}+1_{\{\tau_{B_m}> n\}}e^{\int_0^{n}  (f(X_s)-\lambda_1^m )ds}\times\\
    &\phantom{=}\times\mathbb{E}_{X_n}[e^{\int_n^{n+1} (f(X_s)-\lambda_1^m )ds+1_{\{X_1\in B_m\}}w_1^m(X_{1})+1_{\{X_1\notin B_m\}}w_1^m(X_{1})}|\mathcal{F}_n]\\
    &=1_{\{\tau_{B_m}\leq n\}}e^{\int_0^{n\wedge\tau_{B_m}} (f(X_s)-\lambda_1^m )ds+w_1^m(X_{n\wedge \tau_{B_m}})}\\
    &\phantom{=}+1_{\{\tau_{B_m}> n\}}e^{\int_0^{n\wedge\tau_{B_m}}  (f(X_s)-\lambda_1^m )ds}\times\\
    &\phantom{=}\times\mathbb{E}_{X_n}[e^{\int_n^{n+1} (f(X_s)-\lambda_1^m )ds+1_{\{X_1\in B_m\}}w_1^m(X_{1})+1_{\{X_1\notin B_m\}}Mw_1^m(X_{1})}|\mathcal{F}_n]\\
    &\leq e^{\int_0^{n\wedge\tau_{B_m}}  (f(X_s)-\lambda_1^m )ds+w_1^m(X_{n\wedge \tau_{B_m}})} = z_1^m(n),
\end{align*}
which shows the supermartingale property of $(z_{1}^m(n))$. Next, note that on the set $\{\tau_{B_m}\wedge\hat\tau^m_1>n\}$ we get 
\[
e^{w_1^m(X_n)}=\mathbb{E}_{X_n}\left[e^{\int_0^1 (f(X_s)-\lambda_1^m)ds+1_{\{X_{1}\in B_m\}}w_{1}^m(X_{1})+1_{\{X_{1}\notin B_m\}}Mw_{1}^m(X_{1})}\right].
\]
Thus, we have
\begin{align*}
    \mathbb{E}_x[z_{1}^m((n+1)&\wedge \hat\tau^m_1)|\mathcal{F}_n]\\
    &=1_{\{\tau_{B_m}\wedge \hat\tau^m_1\leq n\}}e^{\int_0^{\tau_{B_m}\wedge \hat\tau^m_1} (f(X_s)-\lambda_1^m )ds+w_1^m(X_{ \tau_{B_m}\wedge \hat\tau^m_1})}\\
    &\phantom{=}+1_{\{\tau_{B_m}\wedge \hat\tau^m_1> n\}}e^{\int_0^{n}  (f(X_s)-\lambda_1^m )ds}\times\\
    &\phantom{=}\times\mathbb{E}_{X_n}[e^{\int_n^{n+1} (f(X_s)-\lambda_1^m )ds+1_{\{X_1\in B_m\}}w_1^m(X_{1})+1_{\{X_1\notin B_m\}}Mw_1^m(X_{1})}|\mathcal{F}_n]\\
    &= e^{\int_0^{n\wedge\tau_{B_m}\wedge\hat\tau^m_1}  (f(X_s)-\lambda_1^m )ds+w_1^m(X_{n\wedge \tau_{B_m}\wedge\hat\tau^m_1})} = z_1^m(n\wedge \hat\tau^m_1),
\end{align*}
which concludes the proof.
\end{proof}

Let us denote by $\mathbb{V}_{\delta,m}$ the family of impulse control strategies with impulse times in the time-grid $\{0,\delta, 2\delta, \ldots\}$ and obligatory impulses when the controlled process exits the set $B_m$ at some multiple of $\delta$.
Using a martingale characterisation of~\eqref{eq:w_delta_m:2}, we get that $\lambda^m_\delta$ is the optimal value of the impulse control problem with impulse strategies from $\mathbb{V}_{\delta,m}$. To show this result, we introduce a strategy $\hat{V}:=(\hat\tau_i,\hat\xi_i)_{i=1}^\infty\in \mathbb{V}_{\delta,m}$ defined recursively, for $i=1, 2,\ldots$, by
\begin{align}\label{eq:w^m_delta_strategy}
    \hat\tau_i&:=\hat\sigma_i\wedge \tau_{B_m}^i,\nonumber\\
    \hat\sigma_i&:=\delta \inf\{n\geq \hat\tau_{i-1}/\delta\colon n\in \mathbb{N}, \, w^m_\delta(X_{n\delta}^i)=Mw^m_\delta(X_{n\delta}^i)\},\nonumber\\
    \tau_{B_m}^i&:=\delta \inf \{n\geq \hat\tau_{i-1}/\delta\colon n\in \mathbb{N},\, X_{n\delta}^i\notin B_m\},\nonumber\\
    \hat\xi_i&:=\argmax_{\xi\in U}(c(X_{\hat\tau_i}^i,\xi)+w^m_\delta(\xi))1_{\{\hat\tau_{i}<\infty\}}+\xi_0 1_{\{\hat\tau_{i}=\infty\}},
\end{align}
where $\hat\tau_0:=0$ and $\xi_0\in U$ is some fixed point.

\begin{theorem}\label{th:lambda_delta_m}
Let $(w_{\delta}^m, \lambda^m_\delta)$ be a solution to~\eqref{eq:w_delta_m:2}. Then, for any $x\in B_m$, we get
\[
\lambda^m_\delta=\sup_{V\in \mathbb{V}_{\delta,m}}\liminf_{n\to\infty}\frac{1}{   n\delta}\ln \mathbb{E}_{(x,V)}\left[e^{\int_0^{n\delta} f(Y_s)ds+\sum_{i=1}^\infty 1_{\{\tau_i\leq n\delta\}}c(Y_{\tau_i^-},\xi_i)}\right].
\]
Also, the strategy $\hat{V}$ defined in~\eqref{eq:w^m_delta_strategy} is optimal. 
\end{theorem}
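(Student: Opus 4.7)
The plan is to adapt the continuous-time verification argument of Theorem~\ref{th:ver_th} to the dyadic bounded-domain setting, substituting the discrete supermartingale/martingale characterisation of Proposition~\ref{pr:_delta_m:martingale} for the continuous martingale used there. Before doing so, I would first check that the candidate $\hat{V}$ defined in~\eqref{eq:w^m_delta_strategy} actually lies in $\mathbb{V}_{\delta,m}$. Since all impulse times are dyadic, admissibility reduces to ruling out $\hat\tau_{i+1}=\hat\tau_i$. Using the triangle inequality from~\eqref{A1} together with $c\leq c_0<0$, one shows that at any after-impulse state $\hat\xi_i\in U$ one has $w^m_\delta(\hat\xi_i)>Mw^m_\delta(\hat\xi_i)$ strictly; hence $\hat\tau_{i+1}\geq \hat\tau_i+\delta$ whenever $\hat\tau_i<\infty$, and $\hat\tau_n\to\infty$ follows automatically.

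For the upper bound, fix $x\in B_m$ and an arbitrary $V=(\tau_i,\xi_i)\in \mathbb{V}_{\delta,m}$. Between consecutive impulses the controlled process $Y$ coincides with a copy $X^i$ of the uncontrolled process, and the $\mathbb{V}_{\delta,m}$ structure guarantees that each $\tau_i$ is either a dyadic time or the obligatory exit time $\tau_{B_m}$, precisely matching the stopping rule embedded in $z^m_\delta$. Applying the optional stopping theorem to the supermartingale from Proposition~\ref{pr:_delta_m:martingale} at $\tau_1/\delta$, then invoking $w^m_\delta(Y_{\tau_1^-})\geq Mw^m_\delta(Y_{\tau_1^-})\geq c(Y_{\tau_1^-},\xi_1)+w^m_\delta(\xi_1)$, and iterating this estimate over $i=1,\dots,n$ via the strong Markov property, I would recursively derive
\[
e^{w^m_\delta(x)}\geq \mathbb{E}_{(x,V)}\!\left[e^{\int_0^{n\delta}(f(Y_s)-\lambda^m_\delta)\,ds+\sum_{i=1}^\infty 1_{\{\tau_i\leq n\delta\}}c(Y_{\tau_i^-},\xi_i)+w^m_\delta(Y_{n\delta})}\right].
\]
Taking logarithms, dividing by $n\delta$, and letting $n\to\infty$, the boundedness of $w^m_\delta$ on $B_m$ together with $w^m_\delta=Mw^m_\delta$ outside $B_m$ makes the $w^m_\delta(Y_{n\delta})$ contribution vanish and yields $\lambda^m_\delta \geq J(x,V)$.

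The matching lower bound $\lambda^m_\delta \leq J(x,\hat{V})$ follows by repeating the same computation with $V$ replaced by $\hat{V}$ and invoking the \emph{martingale} part of Proposition~\ref{pr:_delta_m:martingale} applied to $z^m_\delta(\cdot\wedge \hat\tau^m_\delta/\delta)$. At each impulse time $\hat\tau_i$ one of two alternatives occurs: either $\hat\tau_i=\hat\sigma_i$, in which case $w^m_\delta(Y_{\hat\tau_i^-})=Mw^m_\delta(Y_{\hat\tau_i^-})$ holds by the very definition of $\hat\sigma_i$; or $\hat\tau_i=\tau_{B_m}^i$, in which case $Y_{\hat\tau_i^-}\notin B_m$ and the same equality holds by the boundary clause of~\eqref{eq:w_delta_m:2}. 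Since $\hat\xi_i$ realises the maximum defining $Mw^m_\delta(Y_{\hat\tau_i^-})$, every inequality in the preceding derivation becomes an equality, and iteration gives equality throughout, yielding $\lambda^m_\delta=J(x,\hat{V})$.

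The main obstacle I anticipate is the careful bookkeeping of the terminal term $w^m_\delta(Y_{n\delta})$, in particular on events where $n\delta$ falls inside an interval $[\tau_{k-1},\tau_k)$ or coincides with an impulse time. I would handle this by decomposing the expectation along the partition $\{\tau_{k-1}<n\delta\leq \tau_k\}$, $k\in \mathbb{N}$, exactly as in the proof of Theorem~\ref{th:ver_th}, and then absorbing every finite-horizon residual into a term that vanishes after division by $n\delta$ thanks to the uniform boundedness of $w^m_\delta$ on $B_m$ and of $Mw^m_\delta$ on $U$.
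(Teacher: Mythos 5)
Your proposal takes essentially the same route as the paper: the paper omits this proof entirely, stating only that it "follows the lines of the proof of Theorem~\ref{th:ver_th}", and your adaptation — supermartingale part of Proposition~\ref{pr:_delta_m:martingale} plus $w^m_\delta\geq Mw^m_\delta$ for the upper bound over arbitrary $V\in\mathbb{V}_{\delta,m}$, martingale part plus the two equality alternatives at $\hat\sigma_i$ and at $\tau^i_{B_m}$ for optimality of $\hat V$, terminal terms absorbed after dividing by $n\delta$ — is exactly the intended argument. One small caveat on admissibility: the claim that $w^m_\delta(\hat\xi_i)>Mw^m_\delta(\hat\xi_i)$ holds \emph{strictly} at every after-impulse state does not follow from the triangle inequality for an arbitrary measurable selection of the $\argmax$; what the triangle inequality gives is that if an argmax $\hat\xi$ of $c(x,\cdot)+w^m_\delta(\cdot)$ satisfies $w^m_\delta(\hat\xi)=Mw^m_\delta(\hat\xi)$, then its own argmax $\eta$ is again an argmax for $x$ with $w^m_\delta(\eta)\geq w^m_\delta(\hat\xi)-c_0$. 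So either choose the maximiser of $w^m_\delta$ among the argmaxes (which then does satisfy the strict inequality, since $w^m_\delta$ is bounded on the compact $U$), or note that any chain of simultaneous impulses has length at most $\sup_{\xi,\eta\in U}(w^m_\delta(\xi)-w^m_\delta(\eta))/(-c_0)+1$, so $\hat\tau_n\to\infty$ in either case; with this repair the argument is complete.
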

\begin{proof}
The proof follows the lines of the proof of Theorem~\ref{th:ver_th} and is omitted for brevity.
\end{proof}

Next, we link~\eqref{eq:w_delta_m:2} with an infinite horizon optimal stopping problem under the non-degeneracy assumption.

\begin{theorem}\label{th:w_delta_m:stopping}
Let $(w_{\delta}^m, \lambda^m_\delta)$ be a solution to~\eqref{eq:w_delta_m:2} with $\lambda^m_\delta>r(f)$. Then, we get that $(w_{\delta}^m, \lambda^m_\delta)$ satisfies~\eqref{eq:w_delta_m:stopping_1step}.
\end{theorem}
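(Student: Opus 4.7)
The plan is to show the two inequalities in \eqref{eq:w_delta_m:stopping_1step} separately, exploiting the super/martingale characterisation established in Proposition~\ref{pr:_delta_m:martingale} together with the spectral-type condition $\lambda_\delta^m > r(f)$.

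\textbf{Upper bound.} Fix $x\in B_m$ and $\tau\in\mathcal{T}_{x,b}^{\delta}$, say $\tau\le N\delta$. By Proposition~\ref{pr:_delta_m:martingale} the process $z_\delta^m(n)$ is a $\mathbb{P}_x$-super\-martin\-gale, so optional stopping applied at the bounded dyadic time $\tau/\delta$ yields
\[
e^{w_\delta^m(x)}=z_\delta^m(0)\ \ge\ \mathbb{E}_x\bigl[z_\delta^m(\tau/\delta)\bigr]
=\mathbb{E}_x\!\left[e^{\int_0^{\tau\wedge\tau_{B_m}}(f(X_s)-\lambda_\delta^m)ds+w_\delta^m(X_{\tau\wedge\tau_{B_m}})}\right].
\]
Since \eqref{eq:w_delta_m:2} gives $w_\delta^m\ge Mw_\delta^m$ on $B_m$, while $w_\delta^m=Mw_\delta^m$ outside $B_m$ by construction, we have $w_\delta^m\ge Mw_\delta^m$ everywhere. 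Replacing $w_\delta^m(X_{\tau\wedge\tau_{B_m}})$ by $Mw_\delta^m(X_{\tau\wedge\tau_{B_m}})$ and taking logarithms yields the ``$\ge$'' direction in~\eqref{eq:w_delta_m:stopping_1step}.

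\textbf{Lower bound via $\hat\tau_\delta^m\wedge N\delta$.} Set $\sigma_N:=\hat\tau_\delta^m\wedge N\delta\in\mathcal{T}_{x,b}^\delta$. By the martingale property of $z_\delta^m(n\wedge(\hat\tau_\delta^m/\delta))$,
\[
e^{w_\delta^m(x)}=\mathbb{E}_x\!\left[e^{\int_0^{\sigma_N\wedge\tau_{B_m}}(f(X_s)-\lambda_\delta^m)ds+w_\delta^m(X_{\sigma_N\wedge\tau_{B_m}})}\right].
\]
On $\{\hat\tau_\delta^m\le N\delta\}$ the defining property of $\hat\tau_\delta^m$ gives $w_\delta^m(X_{\sigma_N\wedge\tau_{B_m}})=Mw_\delta^m(X_{\sigma_N\wedge\tau_{B_m}})$ (either at $\hat\tau_\delta^m$ by definition, or earlier at $\tau_{B_m}$ where $w_\delta^m=Mw_\delta^m$ outside $B_m$); similarly on $\{\tau_{B_m}\le N\delta\}$. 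On the remaining event $A_N:=\{\hat\tau_\delta^m>N\delta,\tau_{B_m}>N\delta\}$ we have $\sigma_N\wedge\tau_{B_m}=N\delta$, and using $w_\delta^m\ge Mw_\delta^m$ together with the boundedness of $w_\delta^m$,
\[
0\le e^{w_\delta^m(x)}-\mathbb{E}_x\!\left[e^{\int_0^{\sigma_N\wedge\tau_{B_m}}(f(X_s)-\lambda_\delta^m)ds+Mw_\delta^m(X_{\sigma_N\wedge\tau_{B_m}})}\right]
\le 2e^{\Vert w_\delta^m\Vert}\,\mathbb{E}_x\!\left[\mathbf{1}_{A_N}\,e^{\int_0^{N\delta}(f(X_s)-\lambda_\delta^m)ds}\right].
\]

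\textbf{Vanishing tail via $\lambda_\delta^m>r(f)$.} This is the key step. By definition of $r(f)$ in~\eqref{eq:r(f)}, for any $\varepsilon\in(0,\lambda_\delta^m-r(f))$ there is $t_0$ such that $\sup_{y\in E}\mathbb{E}_y[e^{\int_0^{t}f(X_s)ds}]\le e^{(r(f)+\varepsilon)t}$ for all $t\ge t_0$. Hence
\[
\mathbb{E}_x\!\left[e^{\int_0^{N\delta}(f(X_s)-\lambda_\delta^m)ds}\right]
\le e^{-(\lambda_\delta^m-r(f)-\varepsilon)N\delta}\xrightarrow[N\to\infty]{}0,
\]
which bounds the tail estimate above and yields
\[
\mathbb{E}_x\!\left[e^{\int_0^{\sigma_N\wedge\tau_{B_m}}(f(X_s)-\lambda_\delta^m)ds+Mw_\delta^m(X_{\sigma_N\wedge\tau_{B_m}})}\right]\xrightarrow[N\to\infty]{}e^{w_\delta^m(x)}.
\]
Taking $\tau=\sigma_N$ in the right-hand side of~\eqref{eq:w_delta_m:stopping_1step} and letting $N\to\infty$ provides the reverse inequality, completing the proof.

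The bulk of the work is the upper bound, which is essentially a direct application of Proposition~\ref{pr:_delta_m:martingale}; the main obstacle is the lower bound, where one must prove that the ``non-stopping'' tail $\mathbf{1}_{A_N}e^{\int_0^{N\delta}(f-\lambda_\delta^m)ds}$ dies out. Without the non-degeneracy assumption $\lambda_\delta^m>r(f)$ there is no reason for this integrand to decay, and the optimal stopping characterisation could fail; here this assumption is used exactly once, to control that tail via the definition of $r(f)$.
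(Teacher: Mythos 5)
Your proof is correct, and it reaches the conclusion by a somewhat leaner route than the paper. Both arguments rest on the same engine — the super/martingale characterisation of Proposition~\ref{pr:_delta_m:martingale} plus the non-degeneracy condition $\lambda^m_\delta>r(f)$ — but the limiting arguments are organised differently. The paper first establishes the inequality and the equality at $\hat\tau^m_\delta\wedge\tau_{B_m}$ for \emph{a.s.\ finite} dyadic stopping times, passing to the limit $n\to\infty$ via the uniform integrability result of Proposition~\ref{pr:UI} (which is where $\lambda^m_\delta>r(f)$ enters, through Lemma~\ref{lm:bound_r(f)}), and only at the very end invokes Proposition~\ref{pr:stop_families} to replace $\mathcal{T}^\delta_x$ by $\mathcal{T}^\delta_{x,b}$. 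You instead stay inside $\mathcal{T}^\delta_{x,b}$ throughout: the upper bound is immediate from optional stopping at a bounded time (no integrability issues at all), and for the lower bound you truncate the optimal time to $\sigma_N=\hat\tau^m_\delta\wedge N\delta$ and kill the residual term on $A_N$ by the direct estimate $\mathbb{E}_x[e^{\int_0^{N\delta}(f-\lambda^m_\delta)\,ds}]\le e^{-(\lambda^m_\delta-r(f)-\varepsilon)N\delta}\to 0$, which is essentially a hands-on version of what Lemma~\ref{lm:bound_r(f)}(2) packages. The trade-off: your version is self-contained and avoids two appendix propositions, whereas the paper's version additionally yields the characterisation over the larger class $\mathcal{T}^\delta_x$ of a.s.\ finite stopping times, which is reused elsewhere. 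Your closing remark correctly identifies where the hypothesis $\lambda^m_\delta>r(f)$ is indispensable, matching the paper's Remark~3.6.
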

\begin{proof}
As in the proof of Proposition~\ref{pr:_delta_m:martingale}, we consider only $\delta=1$; the general case follows the same logic.

First, note that for any $x\in B_m$,  $n\in \mathbb{N}$, and $\tau\in \mathcal{T}^\delta_x$, using Proposition~\ref{pr:_delta_m:martingale} and Doob’s optional stopping theorem, we have
\[
e^{w_{1}^m(x)}\geq \mathbb{E}_x\left[e^{\int_0^{n\wedge\tau\wedge \tau_{B_m}} (f(X_s)-\lambda_1^m )ds+w_1^m(X_{n\wedge \tau\wedge \tau_{B_m}})}\right].
\]
Also, recalling the boundedness of $w^m_1$ and Proposition~\ref{pr:UI}, and letting $n\to\infty$, we get
\[
e^{w_{1}^m(x)}\geq \mathbb{E}_x\left[e^{\int_0^{\tau\wedge \tau_{B_m}} (f(X_s)-\lambda_1^m )ds+w_1^m(X_{ \tau\wedge \tau_{B_m}})}\right].
\]
Next, noting that $w_1^m(X_{\tau\wedge \tau_{B_m}})\geq Mw_1^m(X_{\tau\wedge \tau_{B_m}})$, and taking the supremum over $\tau\in \mathcal{T}^\delta_x$, we get
\[
e^{w_{1}^m(x)}\geq \sup_{\tau\in \mathcal{T}^\delta_x}\mathbb{E}_x\left[e^{\int_0^{\tau\wedge \tau_{B_m}} (f(X_s)-\lambda_1^m )ds+Mw_1^m(X_{ \tau\wedge \tau_{B_m}})}\right].
\]

Second, using again Proposition~\ref{pr:_delta_m:martingale}, for any $x\in B_m$ and  $n\in \mathbb{N}$, we get
\[
w_{1}^m(x)= \ln\mathbb{E}_x\left[e^{\int_0^{n\wedge\hat\tau^m_\delta \wedge \tau_{B_m}} (f(X_s)-\lambda_1^m )ds+w_1^m(X_{n\wedge \hat\tau^m_\delta \wedge \tau_{B_m}})}\right].
\]
Using again the boundedness of $w^m_1$ and Proposition~\ref{pr:UI}, and letting $n\to\infty$, we get
\[
w_{1}^m(x)= \ln\mathbb{E}_x\left[e^{\int_0^{\hat\tau^m_\delta \wedge \tau_{B_m}} (f(X_s)-\lambda_1^m )ds+w_1^m(X_{ \hat\tau^m_\delta \wedge \tau_{B_m}})}\right].
\]
In fact, noting that $w_1^m(X_{ \hat\tau^m_\delta \wedge \tau_{B_m}})=Mw_1^m(X_{ \hat\tau^m_\delta \wedge \tau_{B_m}})$, we obtain
\[
w_{1}^m(x)= \ln\mathbb{E}_x\left[e^{\int_0^{\hat\tau^m_\delta \wedge \tau_{B_m}} (f(X_s)-\lambda_1^m )ds+Mw_1^m(X_{ \hat\tau^m_\delta \wedge \tau_{B_m}})}\right],
\]
thus we get
\[
e^{w_{1}^m(x)}= \sup_{\tau\in \mathcal{T}^\delta_x}\mathbb{E}_x\left[e^{\int_0^{\tau\wedge \tau_{B_m}} (f(X_s)-\lambda_1^m )ds+Mw_1^m(X_{ \tau\wedge \tau_{B_m}})}\right].
\]
Finally, using Proposition~\ref{pr:stop_families}, we have
\[
e^{w_{1}^m(x)}= \sup_{\tau\in \mathcal{T}^\delta_{x,b}}\mathbb{E}_x\left[e^{\int_0^{\tau\wedge \tau_{B_m}} (f(X_s)-\lambda_1^m )ds+Mw_1^m(X_{ \tau\wedge \tau_{B_m}})}\right],
\]
which concludes the proof.
\end{proof}
\begin{remark}
In Theorem~\ref{th:w_delta_m:stopping} we showed that, if $\lambda^m_\delta>r(f)$, a solution to the one-step equation~\eqref{eq:w_delta_m:2} is uniquely characterised by the optimal stopping value function~\eqref{eq:w_delta_m:stopping_1step}. If $\lambda^m_\delta\leq r(f)$, the problem is degenerate and, in particular, we cannot use the uniform integrability result from Proposition~\ref{pr:UI}. In fact, in this case it is even possible that the one-step Bellman equation admits multiple solutions and the optimal stopping characterisation does not hold; see e.g. Theorem 1.13 in~\cite{PesShi2006} for details.
\end{remark}

\section{Dyadic impulse control}\label{S:dyadic}

In this section we consider a dyadic full-domain version of~\eqref{eq:imp_control_long_run3}. We construct a solution to the associated Bellman equation which will be later used to find a solution to~\eqref{eq:Bellman_long_run4}. The argument uses a bounded domain approximation from Section~\ref{S:dyadic_bounded}. More specifically, throughout this section we fix some $\delta>0$ and show the existence of a function $w_\delta\in \mathcal{C}_b(E)$ and a constant $\lambda_\delta\in \mathbb{R}$, which are a solution to the dyadic Bellman equation of the form
\begin{equation}\label{eq:Bellman_delta}
w_{\delta}(x)=\sup_{\tau\in \mathcal{T}_{x,b}^{\delta}}\ln\mathbb{E}_x\left[e^{\int_0^{\tau} (f(X_s)-\lambda_\delta )ds+Mw_\delta(X_{\tau})}\right], \quad x\in E.    
\end{equation}
In fact, we set
\begin{equation}\label{eq:lambda_delta}
    \lambda_{\delta}:=\lim_{m\to\infty} \lambda^m_\delta;
\end{equation}
note that this constant is well-defined as, from Theorem~\ref{th:lambda_delta_m}, recalling that $B_m\subset B_{m+1}$, we get $\lambda^m_\delta\leq \lambda^{m+1}_\delta$, $m\in \mathbb{N}$.

First, we state the lower bound for $\lambda_\delta$. 
\begin{lemma}\label{lm:lambda_delta_bound}
Let $(w_\delta,\lambda_\delta)$ be a solution to~\eqref{eq:Bellman_delta}. Then, we get $\lambda_\delta\geq r(f)$.
\end{lemma}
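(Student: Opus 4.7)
The plan is to mimic the proof of Lemma~\ref{lm:lambda_bound}, with the deterministic times there replaced by dyadic ones. For each $n \in \mathbb{N}$ the constant time $\tau \equiv n\delta$ belongs to $\mathcal{T}^\delta_{x,b}$ for every $x \in E$, so plugging it into the Bellman equation~\eqref{eq:Bellman_delta} yields the pointwise lower bound
\[
w_\delta(x) \geq \ln \mathbb{E}_x\left[e^{\int_0^{n\delta} (f(X_s) - \lambda_\delta)\,ds + Mw_\delta(X_{n\delta})}\right], \quad x \in E.
\]

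Using the boundedness of $w_\delta \in \mathcal{C}_b(E)$ together with the boundedness of $Mw_\delta$ (which follows from~\eqref{A1} since $\|Mw_\delta\| \leq \|c\| + \sup_{\xi\in U}|w_\delta(\xi)|$), I would detach the terminal term to obtain
\[
\|w_\delta\| + \|Mw_\delta\| \geq \sup_{x \in E} \ln \mathbb{E}_x\left[e^{\int_0^{n\delta} (f(X_s) - \lambda_\delta)\,ds}\right].
\]
Dividing by $n\delta$, letting $n \to \infty$, and invoking the definition~\eqref{eq:r(f)} of $r(\cdot)$ along the dyadic subsequence, I arrive at $0 \geq r(f - \lambda_\delta) = r(f) - \lambda_\delta$, which is the claim.

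I do not expect any real obstacle: the argument is only a few lines and essentially repeats Lemma~\ref{lm:lambda_bound} line by line, the sole difference being that the candidate stopping time in~\eqref{eq:Bellman_delta} must be drawn from the dyadic family $\mathcal{T}^\delta_{x,b}$, which the deterministic times $n\delta$ already satisfy. The only minor point worth checking is that the limit in~\eqref{eq:r(f)} evaluated along the subsequence $t = n\delta$ coincides with the full limit; this is immediate from the existence of the limit (cf.~Proposition~1 in~\cite{Ste1989}), or can be established directly from the submultiplicativity of $t \mapsto \sup_x \mathbb{E}_x[e^{\int_0^t f(X_s)\,ds}]$ guaranteed by the Markov property of $X$.
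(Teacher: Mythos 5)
Your proof is correct and is exactly the argument the paper intends: the paper omits this proof, stating only that it follows the lines of Lemma~\ref{lm:lambda_bound}, and your adaptation — substituting the deterministic dyadic times $n\delta \in \mathcal{T}^\delta_{x,b}$ and noting that the subsequential limit in~\eqref{eq:r(f)} agrees with the full limit by submultiplicativity — is precisely the required modification.
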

\begin{proof}
    The proof follows the lines of the proof of Lemma~\ref{lm:lambda_bound} and is omitted for brevity.
\end{proof}

Next, we show the existence of a solution to~\eqref{eq:Bellman_delta} under the non-degeneracy assumption $\lambda_\delta>r(f)$.

\begin{theorem}\label{th:w_delta_existence}
Let $\lambda_\delta$ be given by~\eqref{eq:lambda_delta} and  assume that $\lambda_\delta>r(f)$. Then, there exists $w_\delta\in \mathcal{C}_b(E)$ such that~\eqref{eq:Bellman_delta} is satisfied and we get $\sup_{\xi\in U} w_{\delta}(\xi)=0$.
\end{theorem}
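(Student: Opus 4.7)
The strategy is to pass to the limit $m\to\infty$ in the bounded-domain solutions $(w^m_\delta,\lambda^m_\delta)$ constructed in Theorem~\ref{th:Krein_Rutman_use}, extended to all of $E$ via $w^m_\delta(x):=Mw^m_\delta(x)$ for $x\notin B_m$, with the normalisation $\sup_{\xi\in U}w^m_\delta(\xi)=0$. Since $\lambda^m_\delta\uparrow\lambda_\delta$ by~\eqref{eq:lambda_delta} and $\lambda_\delta>r(f)$ by assumption, we have $\lambda^m_\delta>r(f)$ for all sufficiently large $m$, so the optimal-stopping characterisation from Theorem~\ref{th:w_delta_m:stopping} is available.

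First I establish uniform estimates. Using the compactness of $U$, pick $\xi_m^*\in U$ attaining $w^m_\delta(\xi_m^*)=0$; then for any $x\in E$,
\[
w^m_\delta(x)\;\geq\; Mw^m_\delta(x)\;\geq\; c(x,\xi_m^*)+w^m_\delta(\xi_m^*)\;\geq\;-\Vert c\Vert,
\]
and the uniform upper bound $\Vert w^m_\delta\Vert\leq K$ follows from~\eqref{eq:w_delta_m:2} together with the control of $\lambda^m_\delta$ (bounded above by $\lambda_\delta$) and the non-positivity of $f$. Next, I prove equicontinuity of $(w^m_\delta)$ on every compact $\Gamma\subset E$: the map $Mw^m_\delta$ has a uniform modulus of continuity on $\Gamma$ via $|Mw^m_\delta(x)-Mw^m_\delta(y)|\leq\sup_{\xi\in U}|c(x,\xi)-c(y,\xi)|$ and uniform continuity of $c$ on $\Gamma\times U$; the continuation value $\ln\mathbb{E}_x[e^{\int_0^\delta(f-\lambda^m_\delta)ds+H^m(X_\delta)}]$, with $H^m:=1_{B_m}w^m_\delta+1_{B_m^c}Mw^m_\delta$ uniformly bounded, is equicontinuous by the Feynman-Kac/strong-Feller estimate already carried out inside the proof of Theorem~\ref{th:Krein_Rutman_use} --- crucially, the constants there depend only on $\Vert f\Vert,\Vert c\Vert,\lambda_\delta,\delta$ and the strong-Feller modulus from Assumption~\eqref{A2}, all of which are controlled uniformly in $m$. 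Arzel\`a-Ascoli together with a diagonal extraction along the exhaustion $(B_k)_{k\in\mathbb{N}}$ then produces a subsequence $(w^{m_j}_\delta)$ converging uniformly on compacts to some $w_\delta\in\mathcal{C}_b(E)$ with $\sup_{\xi\in U}w_\delta(\xi)=0$.

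The next step is to pass to the limit in~\eqref{eq:w_delta_m:2}. Fix $x\in E$ and take $j$ large enough that $x\in B_{m_j}$. Uniform convergence on the compact set $U$ yields $Mw^{m_j}_\delta(x)\to Mw_\delta(x)$. For the continuation value, splitting the expectation under $\mathbb{P}_x$ over $\{X_\delta\in B_k\}$ and $\{X_\delta\notin B_k\}$, using uniform convergence on each $B_k$ together with the uniform bound on $\Vert w^{m_j}_\delta\Vert$, and letting $k\to\infty$ (noting $\mathbb{P}_x[X_\delta\in B_k]\to 1$), I obtain $H^{m_j}(X_\delta)\to w_\delta(X_\delta)$ in $L^1(\mathbb{P}_x)$, so bounded convergence gives
\[
w_\delta(x)=\max\Bigl(\ln\mathbb{E}_x\bigl[e^{\int_0^\delta(f(X_s)-\lambda_\delta)ds+w_\delta(X_\delta)}\bigr],\; Mw_\delta(x)\Bigr).
\]
To lift this one-step identity to the stopping-time form~\eqref{eq:Bellman_delta}, I iterate it exactly as in the proof of Theorem~\ref{th:w_delta_m:stopping}: the dyadic process $n\mapsto e^{\int_0^{n\delta}(f-\lambda_\delta)ds+w_\delta(X_{n\delta})}$ is a $\mathbb{P}_x$-supermartingale which becomes a martingale when stopped at $\hat\tau_\delta:=\delta\inf\{k\in\mathbb{N}\colon w_\delta(X_{k\delta})=Mw_\delta(X_{k\delta})\}$. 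Doob's optional stopping combined with the uniform integrability from Proposition~\ref{pr:UI} (whose hypothesis is precisely $\lambda_\delta>r(f)$) allows sending $n\to\infty$; the replacement of $w_\delta$ by $Mw_\delta$ at the terminal time uses $w_\delta\geq Mw_\delta$ globally with equality at $\hat\tau_\delta$; and Proposition~\ref{pr:stop_families} reduces the supremum from $\mathcal{T}^\delta_x$ to $\mathcal{T}^\delta_{x,b}$.

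\textbf{Main obstacle.} The most delicate point is making the equicontinuity estimates genuinely uniform in $m$: one must revisit the Feynman-Kac continuity argument from Theorem~\ref{th:Krein_Rutman_use} and verify that none of the constants silently depends on $m$, which ultimately rests on combining Assumption~\eqref{A2}, the uniform bound on $H^m$, and the uniform control of $\lambda^m_\delta$. A secondary subtlety is the $L^1$-passage in the continuation value, where the $m$-dependent indicator $1_{B_m}$ appears; handling it requires the tail exhaustion of $E$ by $(B_k)$ together with the uniform boundedness of $(w^m_\delta)$.
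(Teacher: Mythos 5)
Your argument is correct in its essentials, but it follows a genuinely different route from the paper's. The paper never tries to compactify the family $(w^m_\delta)$ itself: it applies Arzel\`a--Ascoli only to $(Mw^m_\delta)$, whose equicontinuity is free from $|Mw^m_\delta(x)-Mw^m_\delta(y)|\leq\sup_{\xi\in U}|c(x,\xi)-c(y,\xi)|$ and whose convergence is upgraded to be uniform on all of $E$ via~\eqref{eq:unif_c2}; it then \emph{defines} $w_\delta$ as the optimal stopping value~\eqref{eq:w_delta} with terminal reward $\phi_\delta=\lim_m Mw^m_\delta$, and closes the loop ($\phi_\delta\equiv Mw_\delta$) by proving $w^m_\delta\to w_\delta$ uniformly on compacts through a three-step comparison of stopping values (the intermediate functions~\eqref{eq:w_delta_m_1}--\eqref{eq:w_delta_m_2}, using the characterisation~\eqref{eq:w_delta_m:stopping_1step}, Lemma~\ref{lm:bound_r(f)}, Proposition~\ref{pr:UI}, quasi-left-continuity and Dini). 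You instead extract a limit of $(w^m_\delta)$ directly, pass to the limit in the one-step equation~\eqref{eq:w_delta_m:2}, and then re-derive the stopping characterisation from the limiting one-step equation via the supermartingale/martingale argument. Your route is viable: the strong-Feller estimate inside Theorem~\ref{th:Krein_Rutman_use} indeed depends only on the uniform bound of the terminal data $H^m$ and on Assumption~\eqref{A2}, your uniform bounds on $\Vert w^m_\delta\Vert$ are correct (the upper bound uses $\lambda^m_\delta\geq\lambda^0_\delta>r(f)$ and Lemma~\ref{lm:bound_r(f)}), and your handling of the $1_{B_m}$ indicator is right. What the paper's choice buys is that the only equicontinuity it needs is the trivial one coming from $c$, and the only limit object it must identify is $\phi_\delta$ on the compact set $U$; what your choice buys is that the identification $\phi\equiv Mw_\delta$ is automatic and the Bellman equation is obtained intrinsically rather than by comparing families of value functions. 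Two small points of care in your version: (i) the extension $w^m_\delta:=Mw^m_\delta$ off $B_m$ need not be continuous across $\partial B_m$, so equicontinuity on a compact $\Gamma$ is only available for the tail $m$ with $\Gamma\subset B_m$ --- harmless for the diagonal extraction, but worth stating; (ii) Proposition~\ref{pr:stop_families} as written concerns the bounded-domain stopping problem, so the reduction from $\mathcal{T}^\delta_x$ to $\mathcal{T}^\delta_{x,b}$ for your limiting equation should instead go through truncation at $\hat\tau_\delta\wedge n\delta$ together with $\sup_y\mathbb{E}_y[e^{\int_0^{n\delta}(f(X_s)-\lambda_\delta)ds}]\to0$ from $r(f-\lambda_\delta)<0$.
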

\begin{proof}
We start with some general comments and an outline of the argument. First, note that from Theorem~\ref{th:Krein_Rutman_use}, for any $m\in \mathbb{N}$, we get a solution $(w_\delta^m,\lambda_\delta^m)$ to~\eqref{eq:w_delta_m:2} satisfying $\sup_{\xi\in U}w^m_\delta(\xi)=0$. Also, from the assumption $\lambda_\delta>r(f)$ we get $\lambda^m_\delta>r(f)$ for $m\in \mathbb{N}$ sufficiently big (for simplicity, we assume that $\lambda^0_\delta>r(f)$). Thus, using Theorem~\ref{th:w_delta_m:stopping}, we get that, for any $m\in \mathbb{N}$, the pair $(w_\delta^m,\lambda_\delta^m)$ satisfies~\eqref{eq:w_delta_m:stopping_1step}.

Second, to construct a function $w_\delta$, we use Arzel\`a-Ascoli Theorem. More specifically, recalling that $\sup_{\xi\in U}w^m_\delta(\xi)=0$ and using the fact that $-\Vert c\Vert\leq c(x,\xi)\leq 0 $, $x\in E$, $\xi\in U$, for any $m\in \mathbb{N}$ and $x\in E$, we get
\[
-\Vert c\Vert\leq Mw^m_\delta(x)\leq 0.
\]
Also, note that, for any $m\in \mathbb{N}$ and $x,y\in E$, we have
\[
|Mw^m_\delta(x)-Mw^m_\delta(y)|\leq \sup_{\xi \in U} |c(x,\xi)-c(y,\xi)|.
\]
Consequently, the sequence $(Mw^m_\delta)_{m\in \mathbb{N}}$ is uniformly bounded and equicontinuous. Thus, using Arzel\`a-Ascoli Theorem combined with a diagonal argument, we may find a subsequence (for brevity still denoted by $(Mw^m_\delta)_{m\in \mathbb{N}}$) and a map $\phi_\delta\in \mathcal{C}_b(E)$ such that $Mw^m_\delta(x)$ converges to $\phi_\delta(x)$ as $m\to\infty$ uniformly in $x$ from any compact set. In fact, using Assumption~\eqref{A1} and the argument from the first step of the proof of Theorem 4.1 in~\cite{JelPitSte2019b}, we get that the convergence is uniform in $x\in E$. Then, we define
\begin{equation}\label{eq:w_delta}
  w_{\delta}(x):=\sup_{\tau\in \mathcal{T}_{x,b}^{\delta}}\ln\mathbb{E}_x\left[e^{\int_0^{\tau} (f(X_s)-\lambda_\delta )ds+\phi_\delta(X_{\tau})}\right], \quad x\in E.
\end{equation}
To complete the construction, we show that $w^m_\delta$ converges to $w_\delta$ uniformly on compact sets. Indeed, in this case we have
\[
|Mw^m_\delta(x)-Mw_\delta(x)|\leq \sup_{\xi\in U}|w^m_\delta(\xi)-w_\delta(\xi)|\to 0, \quad m\to\infty,
\]
thus $\phi_\delta\equiv Mw_\delta$ and from~\eqref{eq:w_delta} we get that~\eqref{eq:Bellman_delta} is satisfied. Also, recalling that from Theorem~\ref{th:Krein_Rutman_use} we get $\sup_{\xi\in U} w^m_\delta(\xi)=0$, $m\in \mathbb{N}$, we also get $\sup_{\xi\in U} w_\delta(\xi)=0$.

Finally, to show the convergence, we define the auxiliary functions
\begin{align}
w_{\delta}^{m,1}(x)&:=\sup_{\tau\in \mathcal{T}_{x,b}^{\delta}}\ln\mathbb{E}_x\left[e^{\int_0^{\tau\wedge \tau_{B_m}} (f(X_s)-\lambda^m_\delta )ds+\phi_\delta(X_{\tau\wedge \tau_{B_m}})}\right], \quad x\in E,\label{eq:w_delta_m_1} \\
w_{\delta}^{m,2}(x)&:=\sup_{\tau\in \mathcal{T}_{x,b}^{\delta}}\ln\mathbb{E}_x\left[e^{\int_0^{\tau\wedge \tau_{B_m}} (f(X_s)-\lambda_\delta )ds+\phi_\delta(X_{\tau\wedge \tau_{B_m}})}\right], \quad x\in E \label{eq:w_delta_m_2}.
\end{align}
We split the rest of the proof into three steps: (1) proof that $|w_{\delta}^{m}(x)-w_{\delta}^{m,1}(x)|\to 0$ as $m\to\infty$ uniformly in $x\in E$; (2) proof that $|w_{\delta}^{m,1}(x)-w_{\delta}^{m,2}(x)|\to 0$ as $m\to\infty$ uniformly in $x\in E$; (3) proof that $|w_{\delta}^{m,2}(x)-w_{\delta}(x)|\to 0$ as $m\to\infty$ uniformly in $x$ from compact sets.

\medskip 

\textit{Step 1.} We show $|w_{\delta}^{m}(x)-w_{\delta}^{m,1}(x)|\to 0$ as $m\to\infty$ uniformly in $x\in E$. Note that, for any $x\in E$ and $m\in \mathbb{N}$, we have
\begin{align*}
w_\delta^{m,1}(x) 
& \leq  \sup_{\tau\in \mathcal{T}_{x,b}^\delta}\ln\left( \mathbb{E}_x\left[e^{\int_0^{\tau\wedge \tau_{B_m}} (f(X_s)-\lambda_\delta^m)ds +Mw_\delta^m(X_{\tau\wedge \tau_{B_m}})}\right]e^{\Vert \phi_\delta-Mw_\delta^m\Vert}\right)\\
& = w_\delta^m(x)+\Vert \phi_\delta-Mw_\delta^m\Vert.
\end{align*}
Similarly, we get $w_\delta^m(x)\leq w_\delta^{m,1}(x)+\Vert \phi_\delta-Mw_\delta^m\Vert$, thus
\[
\sup_{x\in E} |w_\delta^m(x)-w_\delta^{m,1}(x)|\leq \Vert \phi_\delta-Mw_\delta^m\Vert.
\]
Recalling the fact that $\phi_\delta$ is a uniform limit of $Mw_\delta^m$ as $m\to\infty$, we conclude the proof of this step.

\medskip 

\textit{Step 2.} We show that $|w_{\delta}^{m,1}(x)-w_{\delta}^{m,2}(x)|\to 0$ as $m\to\infty$ uniformly in $x\in E$. Recalling that $\lambda_\delta^m\uparrow \lambda_\delta$, we get $w_\delta^{m,1}(x)\geq w_\delta^{m,2}(x)\geq -\Vert \phi_\delta\Vert$, $x\in E$. Thus, using the inequality $|\ln y-\ln z|\leq \frac{1}{\min(y,z)}|y-z|$, $y,z>0$, we get
\begin{equation}\label{eq:lm:22:1}
0\leq w_\delta^{m,1}(x)-w_\delta^{m,2}(x)\leq e^{\Vert \phi_\delta\Vert} (e^{w_\delta^{m,1}(x)}-e^{w_\delta^{m,2}(x)}), \quad x\in E.
\end{equation}
Then, noting that $\phi_\delta(\cdot)\leq 0$, for any $m\in \mathbb{N}$ and $x\in E$, we obtain
\begin{align}\label{eq:lm:22:2}
0 \leq e^{w_\delta^{m,1}(x)}-e^{w_\delta^{m,2}(x)} &\leq \sup_{\tau\in \mathcal{T}_{x,b}^\delta}\left( \mathbb{E}_x\left[e^{\int_0^{\tau\wedge \tau_{B_m}} (f(X_s)-\lambda_\delta^m)ds +\phi_\delta(X_{\tau\wedge \tau_{B_m}})}\right]  \right.\nonumber\\
& \left.\phantom{=} - \mathbb{E}_x\left[e^{\int_0^{\tau\wedge \tau_{B_m}} (f(X_s)-\lambda_\delta)ds +\phi_\delta(X_{\tau\wedge \tau_{B_m}})}\right] \right)\nonumber\\
& \leq \sup_{\tau\in \mathcal{T}_{x,b}^\delta}\mathbb{E}_x\left[e^{\int_0^{\tau} f(X_s)ds}\left(e^{-\lambda_\delta^m\tau}- e^{-\lambda_\delta \tau}\right)\right].
\end{align}
Also, recalling that $\lambda_\delta^0\leq \lambda_\delta^m\leq \lambda_\delta$, $m\in \mathbb{N}$, for any $x\in E$ and $T\geq 0$, we get
\begin{align}\label{eq:lm:22:3}
0& \leq \sup_{\tau\in \mathcal{T}_{x,b}}\mathbb{E}_x\left[e^{\int_0^{\tau} f(X_s)ds}\left(e^{-\lambda_\delta^m \tau}- e^{\lambda_\delta\tau}\right)\right] \nonumber\\
& \leq \sup_{\substack{\tau\in \mathcal{T}_{x,b}}}\mathbb{E}_x\left[\left(1_{\{\tau\leq T\}}+1_{\{\tau>T\}}\right)e^{\int_0^{\tau} f(X_s)ds}\left(e^{-\lambda_\delta^m \tau}- e^{-\lambda_\delta \tau}\right)\right] \nonumber\\
& \leq \sup_{\substack{\tau< T\\ \tau\in \mathcal{T}_{x,b}}}e^{T\Vert f\Vert}\mathbb{E}_x\left[\left(e^{-\lambda_\delta^m \tau}- e^{-\lambda_\delta \tau}\right)\right]+\sup_{\substack{\tau\geq T\\ \tau\in \mathcal{T}_{x,b}}}\mathbb{E}_x\left[e^{\int_0^{\tau} (f(X_s)-\lambda_\delta^0)ds}\right].
\end{align}
Recalling  $\lambda_\delta^0>r(f)$ and using Lemma~\ref{lm:bound_r(f)}, for any $\varepsilon>0$, we may find $T\geq 0$, such that 
\[
0\leq \sup_{x\in E} \sup_{\substack{\tau\geq T\\ \tau\in \mathcal{T}_{x,b}}}\mathbb{E}_x\left[e^{\int_0^{\tau} (f(X_s)-\lambda_\delta^0)ds}\right]\leq \varepsilon .
\]
Also, using the inequality $|e^x-e^y|\leq e^{\max(x,y)}|x-y|$, $x,y\geq 0$, we obtain
\begin{multline}
    \sup_{\tau< T}\mathbb{E}_x\left[\left(e^{-\lambda_\delta^m \tau}- e^{-\lambda_\delta \tau}\right)\right]\\
    \leq \sup_{\tau< T} \mathbb{E}_x\left[ e^{\max(-\lambda_\delta^m \tau, -\lambda_\delta \tau)}\tau (\lambda_\delta-\lambda_\delta^m)\right]\\
    \leq e^{|\lambda_\delta^m | T} T(\lambda_\delta-\lambda_\delta^m).
\end{multline}
Thus, for fixed $T\geq 0$, we find $m\geq 0$, such that $e^{|\lambda_\delta^m | T} T(\lambda_\delta-\lambda_\delta^m)\leq \varepsilon $. Hence, recalling~\eqref{eq:lm:22:1}--\eqref{eq:lm:22:3}, for any $x\in E$ and $T,m$ big enough, we get
\[
0\leq w_\delta^{m,1}(x)-w_\delta^{m,2}(x)\leq  e^{\Vert \phi_\delta\Vert} 2\varepsilon.
\]
Recalling that $\varepsilon>0$ was arbitrary, we conclude the proof of this step.

\medskip 

\textit{Step 3.} We show that $|w_{\delta}^{m,2}(x)-w_{\delta}(x)|\to 0$ as $m\to\infty$ uniformly in $x$ from compact sets. First, we show that $w_\delta^{m,2}(x)\leq w_\delta(x)$ for any $m\in \mathbb{N}$ and $x\in E$. Let $\varepsilon>0$ and $\tau_m^\varepsilon\in \mathcal{T}_{x,b}^\delta$ be an $\varepsilon$-optimal stopping time for $w_\delta^{m,2}(x)$. Then, we get
\begin{align*}
w_\delta(x) & \geq \ln \mathbb{E}_x\left[e^{\int_0^{\tau_m^\varepsilon\wedge \tau_{B_m}} (f(X_s)-\lambda_\delta)ds +\phi_\delta(X_{\tau_m^\varepsilon\wedge \tau_{B_m}})}\right] \geq w_\delta^{m,2}(x)-\varepsilon .
\end{align*}
As $\varepsilon>0$ was arbitrary, we get $w_\delta^{m,2}(x)\leq w_\delta(x)$, $m\in \mathbb{N}$, $x\in E$. In fact, using a similar argument, for any $x\in E$, we may show that the map $m\mapsto w_{\delta}^{m,2}(x)$ is non-decreasing.

Second, let $\varepsilon>0$ and $\tau_\varepsilon\in \mathcal{T}_{x,b}^\delta$ be an $\varepsilon$-optimal stopping time for $w_\delta(x)$. 
Then, we obtain
\begin{align}\label{eq:th:dyadic:1}
0\leq w_\delta(x) - w_\delta^{m,2}(x)&\leq \ln \mathbb{E}_x\left[e^{\int_0^{\tau_\varepsilon} (f(X_s)-\lambda_\delta)ds +\phi_\delta(X_{\tau_\varepsilon})}\right]+\varepsilon \nonumber\\
&\phantom{=}  - \ln \mathbb{E}_x\left[e^{\int_0^{\tau_\varepsilon\wedge \tau_{B_m}} (f(X_s)-\lambda_\delta)ds +\phi_\delta(X_{\tau_\varepsilon\wedge \tau_{B_m}})}\right].
\end{align}
Noting that $\tau_{B_m}\uparrow +\infty$ as $m\to \infty$ and using the quasi left-continuity of $X$ combined with Lemma~\ref{pr:UI} and the boundedness of $\phi_\delta$, we get 
\[
\lim_{m\to\infty}  \mathbb{E}_x\left[e^{\int_0^{\tau_\varepsilon\wedge \tau_{B_m}} (f(X_s)-\lambda_\delta)ds +\phi_\delta(X_{\tau_\varepsilon\wedge \tau_{B_m}})}\right] = \mathbb{E}_x\left[e^{\int_0^{\tau_\varepsilon} (f(X_s)-\lambda_\delta)ds +\phi_\delta(X_{\tau_\varepsilon})}\right].
\]
Thus, using~\eqref{eq:th:dyadic:1} and recalling that $\varepsilon>0$ was arbitrary, we get $\lim_{m\to\infty} w_\delta^{m,2}(x) = w_\delta(x)$. Also, noting that by Proposition~\ref{pr:v_stop} and Proposition~\ref{pr:stop_families}, the maps $x\mapsto w_\delta(x)$ and $x\mapsto w_\delta^{m,2}(x)$ are continuous, and using the monotonicity of $m\mapsto w_\delta^{m,2}(x)$, from Dini's Theorem we get that $w_\delta^{m,2}(x)$ converges to $w_\delta(x)$ uniformly in $x$ from compact sets, which concludes the proof.
\end{proof}

We conclude this section with a verification result related to~\eqref{eq:Bellman_delta}.

\begin{theorem}\label{th:ver_th_delta}
Let $(w_\delta,\lambda_\delta)$ be a solution to~\eqref{eq:Bellman_delta} with $\lambda_\delta>r(f)$. Then, we get
\[
\lambda_\delta:=\sup_{V\in \mathbb{V}^\delta}\liminf_{n\to\infty} \frac{1}{n} \ln \mathbb{E}_{(x,V)}\left[e^{\int_0^{n\delta} f(Y_s)ds+\sum_{i=1}^\infty 1_{\{\tau_i\leq n\delta\}}c(Y_{\tau_i^-},\xi_i)}\right],
\]
where $\mathbb{V}^\delta$ is a family of impulse control strategies with impulse times on the dyadic time-grid $\{0,\delta, 2\delta, \ldots\}$.
\end{theorem}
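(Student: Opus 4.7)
The plan is to follow the template of Theorem~\ref{th:ver_th}, replacing the continuous-time Bellman equation~\eqref{eq:Bellman_long_run4} with its dyadic counterpart~\eqref{eq:Bellman_delta} and restricting all stopping times to the grid $\{0,\delta,2\delta,\ldots\}$. The non-degeneracy hypothesis $\lambda_\delta > r(f)$ plays the same role as in Theorem~\ref{th:ver_th}: it legitimises the passage to the limit under iterated applications of the strong Markov property via the uniform integrability provided by Proposition~\ref{pr:UI}.

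For the upper bound, fix $x \in E$ and an arbitrary $V = (\tau_i, \xi_i)_{i=1}^\infty \in \mathbb{V}^\delta$. Since the right-hand side of~\eqref{eq:Bellman_delta} is a supremum over $\tau \in \mathcal{T}^\delta_{x,b}$, Doob's optional stopping argument (along the lines of the proof of Theorem~\ref{th:w_delta_m:stopping}) shows that
\[
n\mapsto e^{\int_0^{\tau_1 \wedge n\delta} (f(X_s^1) - \lambda_\delta)\,ds + w_\delta(X^1_{\tau_1 \wedge n\delta})}
\]
is a $\mathbb{P}_{(x,V)}$-supermartingale. Using $w_\delta \geq M w_\delta$ together with $Mw_\delta(y) \geq c(y,\xi_1) + w_\delta(\xi_1)$ on $\{\tau_1 < n\delta\}$, applying the strong Markov property at each $\tau_i$, and iterating exactly as in~\eqref{eq:th:ver_th:2}, admissibility of $V$ combined with Fatou's lemma yields
\[
e^{w_\delta(x)} \geq \mathbb{E}_{(x,V)}\left[e^{\int_0^{n\delta} (f(Y_s) - \lambda_\delta)\,ds + \sum_{i=1}^\infty 1_{\{\tau_i < n\delta\}} c(Y_{\tau_i^-}, \xi_i) + \sum_{i=1}^\infty 1_{\{\tau_{i-1} < n\delta \leq \tau_i\}} w_\delta(X^i_{n\delta})}\right].
\]
Taking logarithms, dividing by $n\delta$, and using the boundedness of $w_\delta$ before letting $n \to \infty$ yields $\lambda_\delta \geq \liminf_{n\to\infty} \frac{1}{n\delta}\ln \mathbb{E}_{(x,V)}[\cdots]$ for the functional appearing in the statement.

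For the matching lower bound, I would construct the dyadic analog of~\eqref{eq:ver_th:strategy}: set
\[
\hat\tau_i := \delta\inf\{k \in \mathbb{N} : k\delta \geq \hat\tau_{i-1},\, w_\delta(X^i_{k\delta}) = Mw_\delta(X^i_{k\delta})\},
\]
together with $\hat\xi_i := \argmax_{\xi \in U}(c(X^i_{\hat\tau_i},\xi) + w_\delta(\xi))$ on $\{\hat\tau_i < \infty\}$ (and a fixed default otherwise). Since~\eqref{eq:Bellman_delta} is an optimal stopping equation, general theory (Proposition~\ref{pr:v_stop}) shows that $\hat\tau_1$ attains its supremum, so the corresponding exponential process becomes a genuine $\mathbb{P}_{(x,\hat{V}^\delta)}$-martingale in analogy with Proposition~\ref{pr:_delta_m:martingale}. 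All the inequalities in the previous paragraph then become equalities and one concludes $\lambda_\delta = \liminf_{n\to\infty} \frac{1}{n\delta}\ln\mathbb{E}_{(x,\hat{V}^\delta)}[\cdots]$, giving the reverse direction.

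The one technical point requiring care is admissibility of $\hat{V}^\delta$. Here the dyadic setting is markedly cleaner than in Proposition~\ref{pr:adm}: if $\hat\tau_i = \hat\tau_{i+1}$ for some $i$, then $w_\delta(\hat\xi_i) = Mw_\delta(\hat\xi_i) = c(\hat\xi_i,\hat\xi_{i+1}) + w_\delta(\hat\xi_{i+1})$, and the strict upper bound $c \leq c_0 < 0$ forces $w_\delta(\hat\xi_{i+1}) \geq w_\delta(\hat\xi_i) + |c_0|$. Since $w_\delta$ is continuous on the compact set $U$ it is bounded there, so any maximal chain of impulses at a single grid point has length at most a fixed finite constant, whence impulses occur at infinitely many distinct grid points and $\hat\tau_n \to \infty$. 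Combining this with the two bounds above completes the argument.
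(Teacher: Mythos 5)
Your proposal is correct and matches the paper's (omitted) proof, which simply defers to the argument of Theorem~\ref{th:ver_th} adapted to the dyadic grid; your upper and lower bounds via the supermartingale/martingale characterisation coming from~\eqref{eq:Bellman_delta} are exactly that adaptation (modulo the cosmetic point that the normalisation should be by the time horizon $n\delta$, consistent with Theorem~\ref{th:lambda_delta_m}). Your observation that admissibility of the dyadic strategy reduces to bounding the length of chains of simultaneous impulses via $c\leq c_0<0$ and the boundedness of $w_\delta$ on $U$ is a correct and genuinely simpler substitute for the continuous-time argument of Proposition~\ref{pr:adm}, a step the paper leaves implicit.
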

\begin{proof}
    The proof follows the lines of the proof of Theorem~\ref{th:ver_th} and is omitted for brevity.
\end{proof}

\section{Existence of a solution to the Bellman equation}\label{S:asymptotics}
In this section we construct a solution $(w,\lambda)$ to~\eqref{eq:Bellman_long_run4}, which together with Theorem~\ref{th:ver_th} provides a solution to~\eqref{eq:imp_control_long_run3}. The argument uses a dyadic approximation and the results from Section~\ref{S:dyadic}. More specifically, we consider a family of dyadic time steps $\delta_k:=\frac{1}{2^k}$, $k\in \mathbb{N}$. First, we specify the value of $\lambda$. In fact, we define
\begin{equation}\label{eq:lambda}
  \lambda:=\liminf_{k\to\infty}\lambda_{\delta_k},  
\end{equation}
where $\lambda_{\delta_k}$ is a constant given by~\eqref{eq:lambda_delta}, corresponding to $\delta_k$. Note that, if for some $k_0\in \mathbb{N}$ we get $\lambda_{\delta_{k_0}}>r(f)$, then using Theorem~\ref{th:ver_th_delta}, we get that $\lambda_{\delta_k}\leq \lambda_{\delta_{k+1}}$, $k\geq k_0$, and the limit inferior could be replaced by the usual limit.

\begin{theorem}\label{th:Bellman_existence}
    Let $\lambda$ be given by~\eqref{eq:lambda} and   assume that $\lambda>r(f)$. Then, there exists $w\in \mathcal{C}_b(E)$ such that~\eqref{eq:Bellman_long_run4} is satisfied.
\end{theorem}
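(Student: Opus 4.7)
The plan is to realise $w$ as the value function of an auxiliary optimal stopping problem driven by the limit of the sequence $(Mw_{\delta_k})_{k\in\mathbb{N}}$ from Theorem~\ref{th:w_delta_existence}. Since $\sup_{\xi\in U}w_{\delta_k}(\xi)=0$ and $-\Vert c\Vert\leq c(x,\xi)\leq 0$, the family $(Mw_{\delta_k})$ is uniformly bounded by $\Vert c\Vert$. The pointwise estimate $|Mw_{\delta_k}(x)-Mw_{\delta_k}(y)|\leq \sup_{\xi\in U}|c(x,\xi)-c(y,\xi)|$ combined with~\eqref{eq:unif_c2} yields uniform equicontinuity of $(Mw_{\delta_k})$ on $E$. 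By Arzel\`a-Ascoli together with a diagonal argument, as in the first step of the proof of Theorem~4.1 in~\cite{JelPitSte2019b}, I extract a subsequence (still denoted $(Mw_{\delta_k})$) converging uniformly on $E$ to some $\phi\in \mathcal{C}_b(E)$.

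I then define
\[
w(x):=\sup_{\tau\in \mathcal{T}_{x,b}}\ln\mathbb{E}_x\left[\exp\left(\int_0^\tau (f(X_s)-\lambda)ds+\phi(X_\tau)\right)\right],\quad x\in E,
\]
and aim to show that $w_{\delta_k}\to w$ uniformly on compact subsets of $E$. Once this is achieved, the compactness of $U$ gives $Mw_{\delta_k}\to Mw$ uniformly on $E$, hence $\phi\equiv Mw$, and the Bellman equation~\eqref{eq:Bellman_long_run4} follows directly from the definition of $w$.

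To justify the convergence, I follow the three-step pattern of Theorem~\ref{th:w_delta_existence}. First, inside the optimal stopping representation of $w_{\delta_k}$ the terminal reward $Mw_{\delta_k}$ is replaced by $\phi$, with error bounded by $\Vert \phi-Mw_{\delta_k}\Vert\to 0$. Second, the eigenvalue $\lambda_{\delta_k}$ is replaced by $\lambda$; since $\mathbb{V}^{\delta_k}\subset \mathbb{V}^{\delta_{k+1}}$ together with Theorem~\ref{th:ver_th_delta} gives $\lambda_{\delta_k}\uparrow\lambda$, the non-degeneracy assumption $\lambda>r(f)$ allows one to invoke Lemma~\ref{lm:bound_r(f)} exactly as in Step~2 of Theorem~\ref{th:w_delta_existence} to control the error uniformly. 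After these two steps the problem reduces to comparing
\[
w_{\delta_k}^\flat(x):=\sup_{\tau\in \mathcal{T}_{x,b}^{\delta_k}}\ln\mathbb{E}_x\left[e^{\int_0^\tau (f(X_s)-\lambda)ds+\phi(X_\tau)}\right]
\]
with $w(x)$, i.e.\ lifting the dyadic restriction on the stopping-time family.

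This last step is the main obstacle and constitutes the genuinely new ingredient relative to Theorem~\ref{th:w_delta_existence}. The bound $w_{\delta_k}^\flat(x)\leq w(x)$ is immediate from $\mathcal{T}_{x,b}^{\delta_k}\subset \mathcal{T}_{x,b}$. For the reverse direction I take an $\varepsilon$-optimal $\tau\in \mathcal{T}_{x,b}$ for $w(x)$ and approximate it by $\tau^k:=\delta_k\lceil \tau/\delta_k\rceil\in \mathcal{T}_{x,b}^{\delta_k}$, which decreases to $\tau$ pointwise. Right-continuity of the Feller paths of $X$ gives $X_{\tau^k}\to X_\tau$ almost surely, and since $\phi$ and $f$ are bounded and $\tau$ is essentially bounded, dominated convergence yields
\[
\mathbb{E}_x\left[e^{\int_0^{\tau^k}(f(X_s)-\lambda)ds+\phi(X_{\tau^k})}\right]\longrightarrow \mathbb{E}_x\left[e^{\int_0^{\tau}(f(X_s)-\lambda)ds+\phi(X_{\tau})}\right],
\]
so $w(x)\leq \liminf_k w_{\delta_k}^\flat(x)+\varepsilon$. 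Finally, the continuity of $x\mapsto w_{\delta_k}^\flat(x)$ and $x\mapsto w(x)$ ensured by Proposition~\ref{pr:v_stop} and Proposition~\ref{pr:stop_families}, combined with the monotonicity of $k\mapsto w_{\delta_k}^\flat(x)$ (since $\mathcal{T}_{x,b}^{\delta_k}\subset\mathcal{T}_{x,b}^{\delta_{k+1}}$) and Dini's theorem, upgrade pointwise convergence to uniform convergence on compact sets, thereby closing the argument.
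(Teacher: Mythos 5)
Your proposal is correct and follows essentially the same route as the paper: extract $\phi$ via Arzel\`a--Ascoli from the uniformly bounded, equicontinuous family $(Mw_{\delta_k})$, define $w$ as the optimal stopping value with terminal reward $\phi$, and prove $w_{\delta_k}\to w$ uniformly on compacts by the same three-error decomposition (terminal reward swap bounded by $\Vert\phi-Mw_{\delta_k}\Vert$, eigenvalue swap via Lemma~\ref{lm:bound_r(f)}, and lifting the dyadic restriction via the ceiling approximation $\tau^k\downarrow\tau$ with right-continuity), finishing with monotonicity and Dini. The only difference is the order in which the three comparisons are performed, which is immaterial.
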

\begin{proof}
The argument is partially based on the one used in Theorem~\ref{th:w_delta_existence} thus we discuss only the main points. From the fact that $\lambda>r(f)$ we get $\lambda_{\delta_k}>r(f)$ for sufficiently big $k\in \mathbb{N}$; to simplify the notation, we assume $\lambda_{\delta_0}>r(f)$. Thus, using Theorem~\ref{th:w_delta_existence}, for any $k\in \mathbb{N}$, we get the existence of a map $w_{\delta_k}\in \mathcal{C}_b(E)$ satisfying
    \[
w_{\delta_k}(x)=\sup_{\tau\in \mathcal{T}_{x,b}^{\delta_k}}\ln\mathbb{E}_x\left[e^{\int_0^{\tau} (f(X_s)-\lambda_{\delta_k} )ds+Mw_{\delta_k}(X_{\tau})}\right], \quad x\in E
\]
and such that $\sup_{\xi\in U} w_{\delta_k}(\xi)=0$. Thus, we get
\[
-\Vert c\Vert\leq Mw_{\delta_k}(x)\leq 0, \quad k\in \mathbb{N},\, x\in E,
\]
and the family $(Mw_{\delta_k})_{k\in \mathbb{N}}$ is uniformly bounded. Also, it is equicontinuous as we have
\[
|Mw_{\delta_k}(x)-Mw_{\delta_k}(y)|\leq\sup_{x\in U} |c(x,\xi)-c(y,\xi)|, \quad x,y\in E.
\]
Thus, using Arzel\`a-Ascoli theorem, we may choose a subsequence (for brevity still denoted by $(Mw_{\delta_k})$), such that $(Mw_{\delta_k})$ converges uniformly on compact sets to some map $\phi$. In fact, using Assumption~\eqref{A1} and the argument from the first step of the proof of Theorem 4.1 from~\cite{JelPitSte2019b}, we get that $Mw_{\delta_k}(x)$ converges to $\phi(x)$ as $k\to\infty$ uniformly in $x\in E$. Next, let us define
\begin{equation}
    w(x):=\sup_{\tau\in \mathcal{T}_{x,b}}\ln\mathbb{E}_x\left[e^{\int_0^{\tau} (f(X_s)-\lambda )ds+\phi(X_{\tau})}\right], \quad x\in E.
\end{equation}
In the following, we show that $w_{\delta_k}$ converges to $w$ uniformly in compact sets as $k\to\infty$. Then, we get that $Mw_{\delta_k}$ converges to $Mw$, hence $Mw\equiv \phi$ and~\eqref{eq:Bellman_long_run4} is satisfied.

To show the convergence, we define
\[
w_{\delta_k}^1(x):=\sup_{\tau\in \mathcal{T}_{x,b}^{\delta_k}}\ln\mathbb{E}_x\left[e^{\int_0^\tau (f(X_s)-\lambda_{\delta_k})ds+\phi(X_\tau)}\right], \quad k\in \mathbb{N}, \, x\in E.
\]
In the following, we show that $|w(x)-w_{\delta_k}^1(x)|\to 0$ and $|w_{\delta_k}^1(x)-w_{\delta_k}(x)|\to 0$ as $k\to\infty$ uniformly in $x$ from compact sets. In fact, to show the first convergence, we note that
\[
w_{\delta_k}^0(x)\leq w_{\delta_k}^1(x)\leq w_{\delta_k}^2(x), \quad k\in \mathbb{N}, \,x\in E,
\]
where
\begin{align*}
    w_{\delta_k}^0(x)&:=\sup_{\tau\in \mathcal{T}_{x,b}^{\delta_k}}\ln\mathbb{E}_x\left[e^{\int_0^\tau (f(X_s)-\lambda)ds+\phi(X_\tau)}\right], \quad k\in \mathbb{N}, \, x\in E,\\
    w_{\delta_k}^2(x)&:=\sup_{\tau\in \mathcal{T}_{x,b}}\ln\mathbb{E}_x\left[e^{\int_0^\tau (f(X_s)-\lambda_{\delta_k})ds+\phi(X_\tau)}\right], \quad k\in \mathbb{N}, \, x\in E.
\end{align*}
Thus, to prove $|w(x)-w_{\delta_k}^1(x)|\to 0$ it is enough to show $|w(x)-w_{\delta_k}^0(x)|\to 0$ and $|w(x)-w_{\delta_k}^2(x)|\to 0$ as $k\to\infty$.

For transparency, we split the rest of the proof into three parts: (1) proof that $|w(x)-w_{\delta_k}^0(x)|\to 0$ as $k\to\infty$ uniformly in $x$ from compact sets; (2) proof that $|w(x)-w_{\delta_k}^2(x)|\to 0$ as $k\to\infty$ uniformly in $x\in E$; (3) proof that $|w_{\delta_k}^1(x)-w_{\delta_k}(x)|\to 0$ as $k\to\infty$ uniformly in $x\in E$. 

\medskip 

\textit{Step 1.} We show that $|w(x)-w_{\delta_k}^0(x)|\to 0$ as $k\to\infty$ as $k\to\infty$ uniformly in $x$ from compact sets. First, note that we have $w_{\delta_k}^0(x)\leq w(x)$, $k\in \mathbb{N}$, $x\in E$. Next, for any $x\in E$ and $\varepsilon>0$, let $\tau_\varepsilon\in \mathcal{T}_{x,b}$ be an $\varepsilon$-optimal stopping time for $w(x)$ and let $\tau_\varepsilon^k$ be its $\mathcal{T}^{\delta_k}_{x,b}$ approximation given by
\[
\tau_\varepsilon^k:=\inf \left\{\tau \in \mathcal{T}^{\delta_k}_{x,b}: \tau \geq \tau_\varepsilon\right\}=\sum_{j=1}^{\infty} 1_{\left\{\frac{j-1}{2^k}<\tau_\varepsilon \leq \frac{j}{2^m}\right\}} \frac{j}{2^k}.
\]
Then, we get
\begin{align*}
    0&\leq w(x)-w_{\delta_k}^0(x)\\
    &\leq \mathbb{E}_x\left[e^{\int_0^{\tau_\varepsilon} (f(X_s)-\lambda)ds+\phi(X_{\tau_\varepsilon})}\right]-\mathbb{E}_x\left[e^{\int_0^{\tau_\varepsilon^k} (f(X_s)-\lambda)ds+\phi(X_{\tau_\varepsilon^k})}\right]+\varepsilon.
\end{align*}
Also, using Proposition~\ref{pr:UI} and letting $k\to\infty$, we have
\[
\lim_{k\to\infty}\mathbb{E}_x\left[e^{\int_0^{\tau_\varepsilon^k} (f(X_s)-\lambda)ds+\phi(X_{\tau_\varepsilon^k})}\right]=\mathbb{E}_x\left[e^{\int_0^{\tau_\varepsilon} (f(X_s)-\lambda)ds+\phi(X_{\tau_\varepsilon})}\right].
\]
Consequently, recalling that $\varepsilon>0$ was arbitrary, we obtain $\lim_{k\to\infty}w_{\delta_k}^0(x)=w(x)$ for any $x\in E$. In fact, using the monotonicity of the sequence $(w_{\delta_k}^0)_{k\in \mathbb{N}}$ combined with Proposition~\ref{pr:v_stop}, Proposition~\ref{pr:stop_families}, and Dini's theorem, we get that the convergence is uniform on compact sets, which concludes the proof of this step.
\medskip 

\textit{Step 2.} We show that $|w(x)-w_{\delta_k}^2(x)|\to 0$ as $k\to\infty$ uniformly in $x\in E$. First, note that $-\Vert \phi\Vert\leq w(x)\leq w_{\delta_k}^2(x)$, $k\in \mathbb{N}$, $x\in E$. Thus, using the inequality $|\ln y-\ln z|\leq \frac{1}{\min(y,z)}|y-z|$, $y,z>0$, we get 
\[
0\leq w_{\delta_k}^2(x)-w(x)\leq e^{\Vert \phi\Vert}(e^{w_{\delta_k}^2(x)}-e^{w(x)}), \quad k\in \mathbb{N}, \, x\in E.
\]
Also, recalling that $\phi(\cdot)\leq 0$, for any $k\in \mathbb{N}$ and $x\in E$, we obtain
\[
0\leq e^{w_{\delta_k}^2(x)}-e^{w(x)}\leq \sup_{\tau\in \mathcal{T}_{x,b}}\mathbb{E}_x\left[e^{\int_0^\tau f(X_s) ds}\left(e^{-\lambda_{\delta_k}\tau}-e^{-\lambda\tau} \right) \right].
\]
Thus, repeating the argument from the second step of the proof of Theorem~\ref{th:w_delta_existence}, we get $w_{\delta_k}^2(x)\to w(x)$ as $k\to\infty$ uniformly in $x\in E$, which concludes the proof of this step.

\medskip 

\textit{Step 3.} We show that $|w_{\delta_k}^1(x)-w_{\delta_k}(x)|\to 0$ as $k\to\infty$ uniformly in $x\in E$. In fact, recalling that $\Vert Mw_{\delta_k}-\phi\Vert\to 0$ as $k\to\infty$, the argument follows the lines of the one used in the first step of the proof of Theorem~\ref{th:w_delta_existence}. This concludes the proof.
\end{proof}


\appendix

\section{Properties of optimal stopping problems}\label{S:stopping}

In this section we discuss some properties of the optimal stopping problems that are used in this paper. Throughout this section we consider $g,G \in \mathcal{C}_b(E)$ and assume $G(\cdot)\leq 0$ and $r(g)<0$, where $r(g)$ is the type of the semigroup given by~\eqref{eq:r(f)} corresponding to the map $g$. We start with a useful result related to the asymptotic behaviour of the running cost function $g$.

\begin{lemma}\label{lm:bound_r(f)}
Let $a$ be such that $r(g)<a<0$. Then,
\begin{enumerate}
\item The map $x\mapsto U_0^{g-a} 1(x):=\mathbb{E}_x\left[\int_0^\infty e^{\int_0^t (g(X_s) -a)ds}dt\right]$ is continuous and bounded.
\item We get 
\[
\lim_{T\to\infty}\sup_{x\in E}\sup_{\substack{\tau\geq T\\ \tau\in \mathcal{T}_x}}\mathbb{E}_x\left[e^{\int_0^\tau g(X_s)ds}\right]= 0.
\]
\end{enumerate}
\end{lemma}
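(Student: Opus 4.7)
The plan is to handle the two parts separately, exploiting the gap $r(g)<a<0$ to produce an exponential cutoff that makes the infinite-horizon quantities finite.

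For part (1), I would first establish boundedness. Choose $b\in(r(g-a),0)$; such a $b$ exists because $r(g-a)=r(g)-a<0$. By definition of $r(g-a)$ there exists $t_0>0$ with $\sup_{x\in E}\mathbb{E}_x[e^{\int_0^t(g(X_s)-a)ds}]\leq e^{bt}$ for every $t\geq t_0$, while for $t\in[0,t_0]$ the same expectation is trivially bounded by $e^{t_0(\Vert g\Vert+|a|)}$. Splitting $\int_0^\infty\!\cdots dt$ at $t_0$ and using $\int_{t_0}^\infty e^{bt}dt<\infty$ (since $b<0$) yields $\Vert U_0^{g-a}1\Vert_\infty<\infty$. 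For continuity, I would rely on the standard fact that, for each fixed $t>0$, the Feynman--Kac functional $x\mapsto\mathbb{E}_x[e^{\int_0^t(g(X_s)-a)ds}]$ belongs to $\mathcal{C}_b(E)$---the Feynman--Kac perturbation of a Feller semigroup by a bounded continuous potential preserves bounded continuous functions---and then invoke dominated convergence inside $\int_0^\infty\!\cdots dt$ using the uniform exponential bound from the previous sentence.

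For part (2), I would use the decomposition $\int_0^\tau g(X_s)\,ds=a\tau+\int_0^\tau(g(X_s)-a)\,ds$ to obtain, on $\{\tau\geq T\}$ and since $a<0$, the pointwise estimate
\[
e^{\int_0^\tau g(X_s)ds}\leq e^{aT}\,e^{\int_0^\tau(g(X_s)-a)ds}.
\]
The remaining task is to bound $\sup_{x,\tau}\mathbb{E}_x[e^{\int_0^\tau(g(X_s)-a)ds}]$ by a finite constant using part (1). Setting $K:=\Vert g\Vert+|a|$, for every $t\in[\tau,\tau+1]$ the increment $\int_\tau^t(g-a)\,ds$ lies in $[-K,K]$, so $e^{\int_0^t(g-a)ds}\geq e^{-K}e^{\int_0^\tau(g-a)ds}$. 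Integrating this inequality over $t\in[\tau,\tau+1]$, extending the integration range to $[0,\infty)$, and taking $\mathbb{E}_x$, I obtain
\[
\mathbb{E}_x[e^{\int_0^\tau(g(X_s)-a)ds}]\leq e^{K}\,\mathbb{E}_x\!\left[\int_0^\infty e^{\int_0^t(g(X_s)-a)ds}dt\right]=e^{K}U_0^{g-a}1(x)\leq e^{K}\Vert U_0^{g-a}1\Vert_\infty,
\]
which is finite by part (1). Multiplying the two bounds and letting $T\to\infty$ concludes the proof, since $e^{aT}\to 0$.

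The only genuinely non-trivial point is verifying that $x\mapsto\mathbb{E}_x[e^{\int_0^t(g-a)ds}]$ lies in $\mathcal{C}_b(E)$; this is the step I expect to be the main obstacle, and I would discharge it either by a direct Trotter-type time-discretisation argument using the strong Feller property from Assumption~\eqref{A2}, or by citing the standard Feynman--Kac perturbation result for Feller semigroups. Everything else reduces to elementary inequalities and dominated convergence.
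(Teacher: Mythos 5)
Your part (1) matches the paper's argument essentially step by step: pick an intermediate rate $b\in(r(g-a),0)$, split the time integral at some $t_0$ past which the exponential-decay estimate kicks in to get boundedness, and establish continuity by combining the Feller/strong-Feller continuity of $x\mapsto\mathbb{E}_x[e^{\int_0^t(g(X_s)-a)ds}]$ (the paper cites Lemma 4 of Section II.5 in Gikhman--Skorokhod, which rests on Assumption \eqref{A2}) with dominated convergence under the uniform integrable majorant. Nothing to add there.

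For part (2) your route is genuinely different, and a little more elementary than the paper's. Both proofs begin by peeling off $e^{a\tau}\leq e^{aT}$ and then need to dominate $\sup_{x,\tau}\mathbb{E}_x\bigl[e^{\int_0^\tau(g(X_s)-a)ds}\bigr]$ by $\Vert U_0^{g-a}1\Vert$ up to a constant. The paper does this by first observing $U_0^{g-a}1(\cdot)\geq d>0$, inserting $U_0^{g-a}1(X_\tau)/d\geq 1$ under the expectation, and then invoking the strong Markov property to rewrite $e^{\int_0^\tau(g-a)ds}\,U_0^{g-a}1(X_\tau)$ as $\mathbb{E}_x\bigl[\int_\tau^\infty e^{\int_0^t(g-a)ds}dt\mid\mathcal{F}_\tau\bigr]$, which after taking $\mathbb{E}_x$ is bounded by $\Vert U_0^{g-a}1\Vert$. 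You instead use the purely pathwise inequality
\[
\int_0^\infty e^{\int_0^t(g(X_s)-a)ds}\,dt\;\geq\;\int_\tau^{\tau+1} e^{\int_0^t(g(X_s)-a)ds}\,dt\;\geq\;e^{-K}\,e^{\int_0^\tau(g(X_s)-a)ds},
\qquad K=\Vert g\Vert+|a|,
\]
valid for every a.s.\ finite $\tau$, and simply take $\mathbb{E}_x$. This sidesteps the strong Markov property altogether and trades it for a one-line worst-case bound on the increment over $[\tau,\tau+1]$. The resulting constant ($e^{K}$ versus the paper's $1/d$) is different but equally harmless, since both estimates degrade only by a $T$-independent factor in front of the vanishing $e^{aT}$. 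Your version is somewhat more self-contained; the paper's is marginally sharper in that it uses the full mass $U_0^{g-a}1(X_\tau)$ rather than only the contribution from the first unit of time after $\tau$, but in this context either way works. The proof is correct.
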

\begin{proof} For transparency, we prove the claims point by point.

\medskip
\textsc{Proof of (1).} First, we show the boundedness of $x\mapsto U_0^{g-a} 1(x)$. Let $\varepsilon<a-r(g)$. Using the definition of $r(g-a)$ we may find $t_0\geq 0$, such that for any $t\geq t_0$ we get $\sup_{x\in E} \mathbb{E}_x\left[e^{\int_0^t (g(X_s)-a)ds}\right]\leq e^{t(r(g)-a+\varepsilon)}$. Then, using Fubini's theorem and noting that $r(g)-a+\varepsilon<0$, for any $x_0\in E$, we get
\begin{align*}
0\leq U_0^{g-a} 1(x_0)&\leq \int_0^\infty\sup_{x\in E}\mathbb{E}_x\left[ e^{\int_0^t (g(X_s) -a)ds}\right]dt\\& = \int_0^{t_0}\sup_{x\in E}\mathbb{E}_x\left[ e^{\int_0^t (g(X_s) -a)ds}\right]dt+\int_{t_0}^\infty\sup_{x\in E}\mathbb{E}_x\left[ e^{\int_0^t (g(X_s) -a)ds}\right]dt\\
&\leq \int_0^{t_0}e^{t(\Vert g\Vert-a)}dt+\int_{t_0}^\infty e^{t(r(g)-a+\varepsilon)}dt<\infty,
\end{align*}
which concludes the proof of the boundedness of $x\mapsto U_0^{g-a} 1(x)$.

For the continuity, note that using Assumption~\eqref{A2} and repeating the argument used in Lemma 4 in Section II.5 of~\cite{GikSko2004}, we get that $x\mapsto \mathbb{E}_x\left[ e^{\int_0^t (g(X_s) -a)ds}dt\right]$ is continuous for any $t\geq 0$. Also, as in the proof of the boundedness, we may show 
\[
0\leq \sup_{x\in E} \mathbb{E}_x\left[ e^{\int_0^t (g(X_s) -a)ds}\right] \leq e^{t(\Vert g\Vert-a)}1_{\{t\in [0,t_0]\}}+ e^{t(r(g)-a+\varepsilon)}1_{\{t>t_0\}}
\]
and the upper bound is integrable (with respect to $t$). Thus, using Lebesgue’s dominated convergence theorem, we get the continuity of the map $x\mapsto U_0^{g-a} 1(x)=\int_0^\infty\mathbb{E}_x\left[ e^{\int_0^t (g(X_s) -a)ds}\right]dt$, which concludes the proof of this step.

\medskip
\textsc{Proof of (2).} Noting that $U_0^{g-a} 1(x)\geq \int_0^1 e^{-t(\Vert g\Vert -a)}dt$, $x\in E$, we may find $d>0$, such that $U_0^{g-a}1(x)\geq d>0$, $x\in E$. Thus, recalling that $a<0$, we get
\begin{align*}
0&\leq \sup_{\substack{\tau\geq T \\ \tau\in \mathcal{T}_x}}\mathbb{E}_x\left[e^{\int_0^\tau g(X_s)ds}\right]\\
&\leq \sup_{\substack{\tau\geq T \\ \tau\in \mathcal{T}_x}}\mathbb{E}_x\left[e^{\int_0^\tau (g(X_s)-a)ds} e^{a\tau} U_0^{g-a} 1(X_{\tau})\frac{1}{d}\right]\\
& \leq \frac{e^{aT}}{d}\sup_{\substack{\tau\geq T \\ \tau\in \mathcal{T}_x}}\mathbb{E}_x\left[e^{\int_0^\tau (g(X_s)-a)ds}  U_0^{g-a} 1(X_{\tau})\right]\\
& =\frac{e^{aT}}{d}\sup_{\substack{\tau\geq T \\ \tau\in \mathcal{T}_x}}\mathbb{E}_x\left[\int_0^\infty e^{\int_0^{t+\tau} (g(X_s) -a)ds}dt\right]\\
& = \frac{e^{aT}}{d}\sup_{\substack{\tau\geq T \\ \tau\in \mathcal{T}_x}}\mathbb{E}_x\left[\int_{\tau}^\infty e^{\int_0^{t} (g(X_s) -a)ds}dt\right]\\
& \leq \frac{e^{aT}}{d}\mathbb{E}_x\left[\int_{0}^\infty e^{\int_0^{t} (g(X_s) -a)ds}dt\right]\leq \frac{e^{aT}}{d} \Vert U_0^{g-a} 1\Vert \to 0, \quad T\to \infty,
\end{align*}
which concludes the proof.
\end{proof}

Using Lemma~\ref{lm:bound_r(f)} we get the uniform integrability of a suitable family of random variables. This result is extensively used throughout the paper as it simplifies numerous limiting arguments.

\begin{proposition}\label{pr:UI}
For any $x\in E$, the family $\{e^{\int_0^\tau g(X_s)ds}\}_{\tau\in \mathcal{T}_x}$ is $\mathbb{P}_x$-uniformly integrable.
\end{proposition}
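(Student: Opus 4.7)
The plan is to verify the standard uniform integrability criterion: given $\varepsilon>0$, I will exhibit $K>0$ for which
\[
\sup_{\tau\in \mathcal{T}_x}\mathbb{E}_x\left[e^{\int_0^\tau g(X_s)ds}\,1_{\{e^{\int_0^\tau g(X_s)ds}>K\}}\right]<\varepsilon.
\]
The idea is to couple the tail estimate of Lemma~\ref{lm:bound_r(f)}(2) with the trivial deterministic bound $e^{\int_0^\tau g(X_s)ds}\leq e^{T\Vert g\Vert}$ on $\{\tau\leq T\}$, which follows from the boundedness of $g$.

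Fix $x\in E$ and $\varepsilon>0$. The first step is to invoke Lemma~\ref{lm:bound_r(f)}(2) to choose $T=T(\varepsilon)>0$ so large that $\sup_{\sigma\in \mathcal{T}_x,\,\sigma\geq T}\mathbb{E}_x[e^{\int_0^\sigma g(X_s)ds}]<\varepsilon$. Setting $K:=e^{T\Vert g\Vert}$, the deterministic upper bound above yields the inclusion $\{e^{\int_0^\tau g(X_s)ds}>K\}\subseteq\{\tau>T\}$ for every $\tau\in \mathcal{T}_x$, which reduces the task to estimating $\mathbb{E}_x[e^{\int_0^\tau g(X_s)ds}\,1_{\{\tau>T\}}]$ uniformly in $\tau\in \mathcal{T}_x$.

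To place the expression under the scope of Lemma~\ref{lm:bound_r(f)}(2), the second step is to introduce the auxiliary stopping time $\tilde\tau:=\tau\vee T$. Since $\tau$ is $\mathbb{P}_x$-a.s.\ finite and $T$ is deterministic, $\tilde\tau\in \mathcal{T}_x$ with $\tilde\tau\geq T$; moreover, $\tilde\tau=\tau$ on $\{\tau>T\}$ while the indicator vanishes on $\{\tau\leq T\}$, giving $e^{\int_0^\tau g(X_s)ds}\,1_{\{\tau>T\}}\leq e^{\int_0^{\tilde\tau} g(X_s)ds}$. Taking $\mathbb{P}_x$-expectations and invoking the choice of $T$ applied to $\tilde\tau$ closes the estimate.

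I do not expect a genuine obstacle here: all the analytic content sits inside Lemma~\ref{lm:bound_r(f)}, and the remainder is a standard truncation argument. The only point of care is that the replacement $\tau\mapsto \tau\vee T$ keeps us inside $\mathcal{T}_x$, which is automatic because $T$ is a deterministic constant.
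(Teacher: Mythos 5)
Your proof is correct and follows essentially the same route as the paper's: split according to $\{\tau\leq T\}$ versus $\{\tau>T\}$, use the deterministic bound $e^{T\Vert g\Vert}$ on the first event, and invoke Lemma~\ref{lm:bound_r(f)}(2) on the second. Your replacement $\tau\mapsto\tau\vee T$ is in fact a slightly more careful justification of the step the paper writes as a supremum over $\{\tau>T\}$, so nothing is missing.
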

\begin{proof}
Let us fix some $x\in E$ and, for any $\tau\in \mathcal{T}_x$ and $n\in \mathbb{N}$, define the event $A_n^\tau:=\{\int_0^\tau g(X_s)ds\geq n\}$. Note that for any $T\geq 0$, we get
\begin{align*}
    \sup_{\tau\in \mathcal{T}_x} \mathbb{E}_x[1_{A_n^\tau} e^{\int_0^\tau g(X_s)ds}] & \leq \sup_{\substack{\tau \leq T \\ \tau\in \mathcal{T}_x}} \mathbb{E}_x[1_{A_n^\tau} e^{\int_0^\tau g(X_s)ds}] + \sup_{\substack{\tau> T \\ \tau\in \mathcal{T}_x}} \mathbb{E}_x[1_{A_n^\tau} e^{\int_0^\tau g(X_s)ds}]\\
    & \leq \sup_{\substack{\tau \leq T \\ \tau\in \mathcal{T}_x}}e^{T \Vert g\Vert} \mathbb{P}_x[A_n^\tau ] + \sup_{\substack{\tau> T \\ \tau\in \mathcal{T}_x}} \mathbb{E}_x[ e^{\int_0^\tau g(X_s)ds}].
\end{align*}
Next, for any $\varepsilon>0$, using Lemma~\ref{lm:bound_r(f)}, we may find $T>0$ big enough to get 
\[
\sup_{\substack{\tau> T \\ \tau\in \mathcal{T}_x}} \mathbb{E}_x[ e^{\int_0^\tau g(X_s)ds}]<\varepsilon.
\]
Also, noting that for $\tau\leq T$, we get $A_n^\tau \subset \{T\Vert g\Vert \geq n\}$, for any $n>T\Vert g\Vert$, we also get 
\[
\sup_{\substack{\tau \leq T \\ \tau\in \mathcal{T}_x}} \mathbb{P}_x[A_n^\tau ]=0.
\]
Consequently, recalling that $\varepsilon>0$ was arbitrary, we obtain
\[\lim_{n\to\infty} \sup_{\tau\in \mathcal{T}_x} \mathbb{E}_x[A_n^\tau e^{\int_0^\tau g(X_s)ds}]=0,
\]
which concludes the proof.
\end{proof}

Next, we consider an optimal stopping problem of the form
\begin{align}
u(x)&:=\sup_{\tau\in \mathcal{T}_{x,b}}\ln \mathbb{E}_x\left[\exp\left(\int_0^\tau g(X_s)ds +G(X_{\tau})\right)\right], \quad x\in E; \label{eq:v_stop}
\end{align}
note that here the non-positivity assumption for $G$ is only a normalisation as for a generic $\Tilde{G}$ we may set $G(\cdot)=\Tilde{G}(\cdot)-\Vert \Tilde{G}\Vert$ to get $G(\cdot)\leq 0$.

The properties of the map~\eqref{eq:v_stop} are summarised in the following proposition.

\begin{proposition}\label{pr:v_stop}
Let the map $u$ be given by~\eqref{eq:v_stop}. Then, $x\mapsto u(x)$ is continuous and bounded. Also, we get
\begin{equation}\label{eq:pr:v_stop:id}
u(x)=\sup_{\tau\in \mathcal{T}_x}\ln \mathbb{E}_x\left[\exp\left(\int_0^\tau g(X_s)ds +G(X_{\tau})\right)\right], \quad x\in E.
\end{equation}
Moreover, the process
\[
z(t):=e^{\int_0^t g(X_s)+u(X_t)}, \quad t\geq 0,
\]
is a supermartingale and the process $z(t\wedge \hat\tau)$, $t\geq 0$, is a martingale, where
\begin{equation}\label{eq:pr:v_stop:optstop}
\hat\tau:=\inf\{t\geq 0: u(X_t)\leq G(X_t)\}.
\end{equation}
\end{proposition}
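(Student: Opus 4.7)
The plan attacks the four assertions---boundedness, identity~\eqref{eq:pr:v_stop:id}, continuity, and the (super)martingale characterisations---in that order, leveraging Lemma~\ref{lm:bound_r(f)}, Proposition~\ref{pr:UI}, the strong Feller property~\eqref{A2}, and dynamic programming.

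Boundedness is immediate: taking $\tau=0$ yields $u(x)\ge G(x)\ge -\Vert G\Vert$, while, because $G\le 0$, each expectation in~\eqref{eq:v_stop} is dominated by $\sup_{\tau\in\mathcal{T}_x}\mathbb{E}_x[e^{\int_0^\tau g(X_s)ds}]$, which is uniformly finite by splitting at a large $T$ and invoking Lemma~\ref{lm:bound_r(f)}(2) on the tail. For~\eqref{eq:pr:v_stop:id}, the inclusion $\mathcal{T}_{x,b}\subset\mathcal{T}_x$ yields one direction; for the other, approximate any $\tau\in\mathcal{T}_x$ by $\tau_n:=\tau\wedge n\in\mathcal{T}_{x,b}$, note $\tau_n\to\tau$ a.s.\ since $\tau<\infty$ $\mathbb{P}_x$-a.s., and combine quasi-left continuity of $X$, continuity of $G$, and the uniform integrability of $\{e^{\int_0^\tau g(X_s)ds}\}_{\tau\in\mathcal{T}_x}$ (Proposition~\ref{pr:UI}) to pass the expectation to the limit.

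Continuity is the main obstacle. I would proceed by a two-level approximation. Define $u_T(x):=\sup_{\tau\in\mathcal{T}_{x,b},\,\tau\le T}\ln\mathbb{E}_x[e^{\int_0^\tau g+G(X_\tau)}]$. Since $G\le 0$,
\[
0\le e^{u(x)}-e^{u_T(x)}\le \sup_{\tau>T,\,\tau\in\mathcal{T}_x}\mathbb{E}_x\bigl[e^{\int_0^\tau g(X_s)ds}\bigr],
\]
and the right-hand side vanishes uniformly in $x$ by Lemma~\ref{lm:bound_r(f)}(2); combined with the uniform lower bound on $u$, this gives uniform convergence $u_T\to u$ on $E$, so it suffices to prove each $u_T$ is continuous. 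For that, approximate $u_T$ further by its dyadic restriction $u_T^\delta$ (taking $\tau\in\mathcal{T}_{x,b}^\delta\cap[0,T]$): a backward induction on the finite grid $\{0,\delta,\ldots,\lfloor T/\delta\rfloor\delta\}$ produces the Bellman recursion $V_k(x)=\max\bigl(e^{G(x)},\mathbb{E}_x[e^{\int_0^\delta g(X_s)ds}V_{k+1}(X_\delta)]\bigr)$, and continuity propagates from $V_{\lfloor T/\delta\rfloor}=e^G$ back to $V_0=e^{u_T^\delta}$ by essentially the equicontinuity estimates used in the proof of complete continuity of $\tilde{T}_\delta^m$ in Theorem~\ref{th:Krein_Rutman_use}. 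Dyadic approximation of any $\tau\in\mathcal{T}_{x,b}\cap[0,T]$ by $\tau^\delta:=\delta\lceil\tau/\delta\rceil$, together with Proposition~\ref{pr:UI}, then yields $u_T^\delta\to u_T$ locally uniformly, finishing the continuity claim.

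The (super)martingale claims follow from the strong Markov property together with~\eqref{eq:pr:v_stop:id}: shifting an arbitrary $\tau'\in\mathcal{T}_{X_t}$ by $t$ and taking the supremum produces the dynamic programming inequality $e^{u(x)}\ge \mathbb{E}_x\bigl[e^{\int_0^t g(X_s)ds+u(X_t)}\bigr]$ for every $t\ge 0$, and the Markov property at time $s$ upgrades this to $\mathbb{E}_x[z(t)\mid\mathcal{F}_s]\le z(s)$. For the martingale identity up to $\hat\tau$, on $\{t<\hat\tau\}$ we have $u(X_t)>G(X_t)$, so in the subproblem starting at $X_t$ it is strictly suboptimal to stop immediately; this forces the dynamic programming inequality to hold with equality at $t$, and a standard gluing argument along the filtration delivers the martingale property of $z(\cdot\wedge\hat\tau)$.
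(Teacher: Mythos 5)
Your treatment of boundedness, the identity~\eqref{eq:pr:v_stop:id}, and the first half of the continuity argument matches the paper's: both routes truncate at a finite horizon $T$, use Lemma~\ref{lm:bound_r(f)}(2) to get $u_T\to u$ uniformly on $E$, and reduce to the continuity of $u_T$. Where the paper simply invokes external results (Propositions 10--11 of the cited dyadic paper) for the joint continuity of $(T,x)\mapsto u_T(x)$, you propose a further dyadic backward induction. That is a legitimate alternative, but note that the step ``$u_T^\delta\to u_T$ locally uniformly'' is exactly where Assumption~\eqref{A3} enters (one must control $G(X_{\tau^\delta})-G(X_\tau)$ uniformly over $\tau\le T$ via the short-time distance estimate $M_\Gamma(t,r_0)\to0$); Proposition~\ref{pr:UI} alone does not deliver this, and you should cite \eqref{A3} explicitly there, as the paper does.

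The genuine gap is in the martingale part. First, your derivation of the supermartingale inequality $e^{u(x)}\ge\mathbb{E}_x[e^{\int_0^t g(X_s)ds+u(X_t)}]$ by ``shifting an arbitrary $\tau'\in\mathcal{T}_{X_t}$ by $t$'' requires a \emph{measurable} $\varepsilon$-optimal selection $y\mapsto\tau'_y$ so that the concatenated time is a stopping time; this is not automatic and is precisely what the paper avoids by instead taking limits of the finite-horizon supermartingales $z_T(t)=e^{\int_0^{t\wedge T}g\,ds+u_{T-t\wedge T}(X_{t\wedge T})}$. Second, and more seriously, your justification of the martingale property of $z(\cdot\wedge\hat\tau)$ --- ``on $\{t<\hat\tau\}$ it is strictly suboptimal to stop, which forces equality in the dynamic programming inequality'' --- is a heuristic, not a proof: equality of $e^{u(x)}$ with $\mathbb{E}_x[e^{\int_0^{t\wedge\hat\tau}g\,ds+u(X_{t\wedge\hat\tau})}]$ is essentially the statement being proved, so the argument is circular as written. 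The paper's proof supplies the missing content: it shows that the finite-horizon optimal times $\tau_T:=\inf\{t\ge0:u_{T-t}(X_t)\le G(X_t)\}$ increase to $\hat\tau$ (itself a nontrivial argument split over $\{\lim_T\tau_T<\infty\}$ and its complement), uses the classical fact that $z_T(\cdot\wedge\tau_T)$ is a martingale for each $T$, and passes to the limit using the uniform convergence $u_T\to u$ and dominated convergence. Some version of this approximation (or an equivalent Snell-envelope/penalization argument) is needed; the ``standard gluing argument along the filtration'' you appeal to does not exist in the generality of this setting without it.
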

\begin{proof}
For transparency, we split the proof into two steps: (1) proof of the continuity of $x\mapsto u(x)$ and identity~\eqref{eq:pr:v_stop:id}; (2) proof of the martingale properties of the process $z$.

\medskip 

\textit{Step 1.} We show that the map $x\mapsto u(x)$ is continuous and the identity~\eqref{eq:pr:v_stop:id} holds. For any $T\geq 0$ and $x\in E$, let us define
\begin{align}
\hat{u}(x)&:=\sup_{\tau\in \mathcal{T}_x}\ln \mathbb{E}_x\left[\exp\left(\int_0^\tau g(X_s)ds +G(X_{\tau})\right)\right];\label{eq:eq:v_stop_unb}\\
u_T(x)&:=\sup_{\tau\leq T}\ln \mathbb{E}_x\left[\exp\left(\int_0^\tau g(X_s)ds +G(X_{\tau})\right)\right].\label{eq:v_stop_T}
\end{align}
Using Assumption~\eqref{A3} and following the proof of Proposition 10 and Proposition 11 in~\cite{JelPitSte2019a}, we get that the map $(T,x)\mapsto u_T(x)$ is jointly continuous and bounded; see also Remark 12 therein. We show that $u_T(x)\to \hat{u}(x)$ as $T\to\infty$ uniformly in $x\in E$. 
Noting that 
\[
-\Vert G\Vert\leq u_T(x)\leq u(x), \quad T\geq 0, \, x\in E,
\]
and using the inequality $|\ln y-\ln z|\leq \frac{1}{\min(y,z)}|y-z|$, $y,z>0$, to show $u_T(x)\to \hat{u}(x)$ as $T\to\infty$ uniformly in $x\in E$ it is enough to show $e^{u_T(x)}\to e^{\hat{u}(x)}$ as $T\to\infty$ uniformly in $x\in E$. Then, using Lemma~\ref{lm:bound_r(f)}, for any $\varepsilon>0$, we may find $T\geq 0$ such that for any $x\in E$, we obtain
\begin{align*}
0\leq e^{\hat{u}(x)}-e^{u_T(x)}& \leq \sup_{\tau \in \mathcal{T}_x} \mathbb{E}_x\left[e^{\int_0^\tau g(X_s)ds +G(X_{\tau})}-e^{\int_0^{\tau\wedge T} g(X_s)ds +G(X_{\tau\wedge T})}\right]\nonumber\\
& \leq \sup_{\tau \in \mathcal{T}_x} \mathbb{E}_x\left[1_{\{\tau \geq T \}}\left( e^{\int_0^\tau g(X_s)ds +G(X_{\tau})}-e^{\int_0^{T} g(X_s)ds +G(X_{T})}\right)\right]\nonumber \\
& \leq \sup_{\tau \in \mathcal{T}_x} \mathbb{E}_x\left[1_{\{\tau \geq T \}} e^{\int_0^\tau g(X_s)ds +G(X_{\tau})}\right] \nonumber \\
& \leq \sup_{\substack{\tau\geq T\\ \tau\in \mathcal{T}_{x}}} \mathbb{E}_x\left[e^{\int_0^\tau g(X_s)ds}\right]\leq \varepsilon.
\end{align*}
Thus, letting $\varepsilon\to 0$, we get $e^{u_T(x)}\to e^{\hat{u}(x)}$ as $T\to\infty$ uniformly in $x\in E$ and consequently $u_T(x)\to \hat{u}(x)$ as $T\to\infty$ uniformly in $x\in E$. Thus, from the continuity of $x\mapsto u_T(x)$, $T\geq 0$, we get that the map $x\mapsto \hat{u}(x)$ is continuous. 

Now, we show that $u\equiv \hat{u}$. First, we show that $\lim_{T\to\infty} u_T(x) = \tilde{u}(x)$, where $ \tilde{u}(x):=\sup_{\tau\in \mathcal{T}_x} \liminf_{T\to\infty} \ln \mathbb{E}_x\left[e^{\int_0^{\tau\wedge T} g(X_s)ds +G(X_{\tau\wedge T})}\right]$, $x\in E$. For any $T\geq 0$ and $x\in E$, we get
\begin{equation*}
u_T(x) = \sup_{\tau\leq T} \liminf_{S\to\infty}\ln \mathbb{E}_x\left[e^{\int_0^{\tau\wedge S} g(X_s)ds +G(X_{\tau\wedge S})}\right] \leq  \tilde{u}(x),
\end{equation*}
thus we get $\lim_{T\to\infty} u_T(x) \leq  \tilde{u}(x)$. Also, for any $x\in E$, $\tilde{\tau}\in \mathcal{T}_x$, and $T\geq 0$, we get
\[
\ln \mathbb{E}_x\left[e^{\int_0^{\tilde\tau\wedge T} g(X_s)ds +G(X_{\tilde\tau\wedge T})}\right]\leq u_T(x).
\]
Thus, letting $T\to\infty$ and taking supremum over $\tilde{\tau}\in \mathcal{T}_x$ we get $\lim_{T\to\infty} u_T(x) =  \tilde{u}(x)$, $x\in E$. Also, using the argument from Lemma 2.2 from~\cite{JelSte2020} we get $ \tilde{u} \equiv u$. Thus, we get $u(x) = \lim_{T\to\infty} u_T(x) = \hat{u}(x)$, $x\in E$, hence the map $x\mapsto u(x)$ is continuous. Also, we get~\eqref{eq:pr:v_stop:id}.

\medskip 

\textit{Step 2.} We show the martingale properties of $z$. First, we focus on the stopping time $\hat\tau$. Let us define
\[
\tau_T:=\inf\{t\geq 0: u_{T-t}(X_t)\leq G(X_t)\}.
\]
Using the argument from Proposition 11 in~\cite{JelPitSte2019a} we get that $\tau_T$ is an optimal stopping time for $u_{T}$. Also, noting that the map $T\mapsto u_T(x)$, $x\in E$, is increasing, we get that $T\mapsto \tau_T $ is also increasing, thus we may define $\tilde{\tau}:=\lim_{T\to\infty} \tau_T$. We show that $\tilde\tau\equiv \hat\tau$. 

Let $A:=\{\tilde\tau<\infty\}$. First, we show that $\tilde\tau\equiv \hat\tau$ on $A$.  On the event $A$, we get $u_{T-\tau_T}(X_{\tau_T})=G(X_{\tau_T})$. Thus, letting $T\to\infty$, we get $u(X_{\tilde\tau})=G(X_{\tilde\tau})$, hence we get $\hat\tau\leq \tilde\tau$. Also, noting that $u_S(x)\leq u(x)$, $x\in E$, $S\geq 0$, on the set $\{\hat\tau\leq T\}$ we get $u_{T-\hat\tau}(X_{\hat\tau})\leq u(X_{\hat\tau})\leq G(X_{\hat\tau})$, hence 
\begin{equation}\label{eq:pr:v_stop:1}
\tau_T\leq \hat\tau.
\end{equation}
Thus, recalling that $\hat\tau\leq \tilde\tau < \infty$ and letting $T\to\infty$ in~\eqref{eq:pr:v_stop:1}, we get $\tilde\tau \leq  \hat\tau$, which shows $\tilde\tau \equiv \hat\tau$ on $A$.

Now, we show that $\tilde\tau \equiv \hat\tau$ on $A^c$. Let $\omega\in A^c$ and suppose that $\hat\tau(\omega)<\infty$. Then, we may find $T\geq 0$ such that $\hat\tau(\omega)<T$. Also, for any $S\geq T$ we get
\[
u_{S-\hat\tau(\omega)}(X_{\hat{\tau}(\omega)}(\omega))\leq u(X_{\hat{\tau}(\omega)}(\omega))\leq G(X_{\hat{\tau}(\omega)}(\omega)).
\]
Thus, we get $\tau_S(\omega)\leq\hat\tau(\omega)$ for any $S\geq T$. Consequently, letting $S\to\infty$ we get $ \tilde\tau(\omega)<\infty$, which contradicts the choice of $\omega\in A^c$. Consequently, on $A^c$ we have $\tilde\tau=\infty= \hat\tau$.

Finally, we show the martingale properties. Let us define the processes
\begin{align*}
z_T(t)& :=e^{\int_0^{t\wedge T} g(X_s)ds+ u_{T-t\wedge T}(X_{t\wedge T})}, \quad T,t\geq 0,\\
z(t)&:=e^{\int_0^{t} g(X_s)ds+ u(X_t)}, \quad t\geq 0.
\end{align*}
Using standard argument we get that for any $T\geq 0$, the process $z_T(t)$, $t\geq 0$, is a supermartingale and $z_T(t\wedge \tau_T)$, $t\geq 0$, is a martingale; see e.g.~\cite{Fak1970,Fak1971} for details. Also, recalling that from the first step we get $u_T(x)\to u(x)$ as $T\to \infty$ uniformly in $x\in E$, for any $t\geq 0$, we get that $z_T(t)\to z(t)$ and $z_T(t\wedge \tau_T)\to z(t\wedge\hat\tau)$ as $T\to\infty$. Consequently, using Lebesgue’s dominated convergence theorem, we get that the process $z(t)$ is a supermartingale and $z(t\wedge \hat\tau)$, $t\geq 0$, is a martingale, which concludes the proof.
\end{proof}

Next, we consider an optimal stopping problem in a compact set and dyadic time-grid. More specifically, let $\delta>0$, let $B\subset E$ be compact and assume that $\mathbb{P}_x[\tau_B<\infty]=1$, $x\in B$, where $\tau_B:=\delta\inf\{n \in \mathbb{N}\colon X_{n\delta}\notin B\}$.  Within this framework, we consider an optimal stopping problem of the form
\begin{align}\label{eq:v_stop_B}
u_B(x)&:=\sup_{\tau\in \mathcal{T}^\delta}\ln \mathbb{E}_x\left[\exp\left(\int_0^{\tau\wedge \tau_B} g(X_s)ds +G(X_{\tau\wedge \tau_B})\right)\right], \quad x\in E. 
\end{align}
The properties of~\eqref{eq:v_stop_B} are summarised in the following proposition.
\begin{proposition}\label{pr:stop_families}
Let $u_B$ be given by~\eqref{eq:v_stop_B}. Then, we get
\begin{equation}\label{eq:pr:stop_families:1}
 u_B(x)=\sup_{\tau\in \mathcal{T}_{x,b}^\delta}\ln \mathbb{E}_x\left[\exp\left(\int_0^{\tau\wedge \tau_B} g(X_s)ds +G(X_{\tau\wedge \tau_B})\right)\right], \quad x\in E.   
\end{equation}
Also, the map $x\mapsto u_B(x)$ is continuous and bounded. Moreover, the process
\[
z_\delta(n):=e^{\int_0^{n\delta} g(X_s)+u(X_{n\delta})}, \quad n\in \mathbb{N},
\]
is a supermartingale and the process $z(n\wedge \hat\tau/\delta)$, $n\in \mathbb{N}$, is a martingale, where
\begin{equation}
\hat\tau:=\delta\inf\{n\in \mathbb{N}\colon  u_B(X_{n\delta})\leq G(X_{n\delta})\}.
\end{equation}
\end{proposition}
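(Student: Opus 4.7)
The plan is to mirror the strategy of Proposition~\ref{pr:v_stop}, adapted to the discrete-time / bounded-domain setting and exploiting the absorption at $E\setminus B$ (there $\tau_B=0$, so $u_B(x)=G(x)$ automatically). First I would introduce the finite-horizon truncations
\[
u_{B,N}(x):=\sup_{\substack{\tau\in\mathcal{T}^\delta\\ \tau\leq N\delta}}\ln\mathbb{E}_x\!\left[\exp\!\left(\int_0^{\tau\wedge\tau_B}g(X_s)ds+G(X_{\tau\wedge\tau_B})\right)\right],
\]
and observe that, since admissible stopping times take only finitely many values, a standard Snell-envelope backward recursion yields $u_{B,N}=U_0$ with $U_N:=G$ and, for $0\leq n<N$,
\[
U_n(x)=\begin{cases}\max\!\bigl(G(x),\,\ln\mathbb{E}_x[e^{\int_0^\delta g(X_s)ds+U_{n+1}(X_\delta)}]\bigr),& x\in B,\\ G(x),&x\notin B.\end{cases}
\]
By Assumption~\eqref{A2}, as in the complete-continuity argument in the proof of Theorem~\ref{th:Krein_Rutman_use}, the map $h\mapsto\mathbb{E}_\cdot[e^{\int_0^\delta g(X_s)ds}h(X_\delta)]$ sends bounded measurable functions to continuous bounded ones; a finite induction then delivers the continuity and boundedness of $u_{B,N}$.

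Next I would show $u_{B,N}\to u_B$ uniformly on $E$, which I expect to be the main obstacle. The estimate
\[
0\leq e^{u_B(x)}-e^{u_{B,N}(x)}\leq\sup_{\tau\in\mathcal{T}_x^\delta}\mathbb{E}_x\!\left[\mathbf{1}_{\{\tau\wedge\tau_B>N\delta\}}\,e^{\int_0^{\tau\wedge\tau_B}g(X_s)ds}\right],
\]
combined with item~(2) of Lemma~\ref{lm:bound_r(f)} and the uniform integrability from Proposition~\ref{pr:UI}, drives the right-hand side to zero uniformly in $x\in E$; the elementary bound $|\ln y-\ln z|\leq|y-z|/\min(y,z)$ together with the lower bound $u_{B,N},\,u_B\geq-\Vert G\Vert$ (obtained by taking $\tau\equiv 0$) transfers this to convergence on the logarithmic scale, yielding continuity and boundedness of $u_B$. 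Identity~\eqref{eq:pr:stop_families:1} falls out in parallel: for any $\tau\in\mathcal{T}^\delta$ the truncation $\tau_N:=\tau\wedge N\delta\in\mathcal{T}_{x,b}^\delta$ satisfies $\tau_N\wedge\tau_B\uparrow\tau\wedge\tau_B$, and Proposition~\ref{pr:UI} combined with dominated convergence lets me pass $N\to\infty$ under the expectation before taking the supremum.

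Finally, passing $N\to\infty$ in the backward recursion, and using the uniform convergence just established, produces the Bellman equation
\[
u_B(x)=\max\!\Big(G(x),\,\ln\mathbb{E}_x[e^{\int_0^\delta g(X_s)ds+u_B(X_\delta)}]\Big),\quad x\in B,
\]
with $u_B=G$ on $E\setminus B$. The supermartingale property of $z_\delta(n)$ and the martingale property of $z_\delta(n\wedge\hat\tau/\delta)$ then follow by applying this Bellman identity pointwise at $X_{n\delta}$ and invoking the Markov property, exactly along the lines of the proof of Proposition~\ref{pr:_delta_m:martingale}. For the martingale claim, observe that since $u_B=G$ on $E\setminus B$ the definition of $\hat\tau$ gives $\hat\tau\leq\tau_B$; on the event $\{n\delta<\hat\tau\}$ one therefore has $X_{n\delta}\in B$ and $u_B(X_{n\delta})>G(X_{n\delta})$, so the maximum in the Bellman equation is realised by the continuation term and the supermartingale inequality becomes an equality.
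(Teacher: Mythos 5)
Your argument is correct and follows the same skeleton as the paper's proof: finite-horizon dyadic truncations, the backward (Snell-envelope) recursion, the strong Feller property from Assumption~\eqref{A2}, uniform integrability from Proposition~\ref{pr:UI}, and the martingale argument of Proposition~\ref{pr:_delta_m:martingale}. The one place where you genuinely diverge is the continuity of $u_B$. You obtain it by proving \emph{uniform} convergence $u_{B,N}\to u_B$ on $E$, which forces you to invoke the uniform tail estimate of item (2) of Lemma~\ref{lm:bound_r(f)} (and, strictly speaking, to replace the stopping time on the event $\{\tau\wedge\tau_B>N\delta\}$ by $(\tau\wedge\tau_B)\vee N\delta$ so that the lemma applies to the indicator-restricted expectation). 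The paper instead establishes only \emph{pointwise} convergence of the truncations via a sandwich $u^n_B\leq \hat u_B\leq u_B\leq \lim_n u^n_B$ --- enough to pass to the limit in the one-step recursion and obtain the fixed-point identity for $u_B$ on $B$ --- and then reads off continuity by applying the strong Feller smoothing to the bounded measurable functions $1_{\{\cdot\in B\}}e^{u_B(\cdot)}$ and $1_{\{\cdot\notin B\}}e^{G(\cdot)}$ appearing in the limit equation itself. The paper's route is slightly lighter (no uniform convergence and no direct appeal to Lemma~\ref{lm:bound_r(f)} in this proof); yours yields the stronger by-product that the finite-horizon values converge uniformly on all of $E$. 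For identity~\eqref{eq:pr:stop_families:1} both arguments use the same truncation-plus-uniform-integrability device, and your observation that $\hat\tau\leq\tau_B$ (because $u_B=G$ off $B$) correctly justifies turning the supermartingale inequality into an equality before $\hat\tau$.
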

\begin{proof}
To ease the notation, let us define
\begin{align*}
    \hat{u}_B(x)&:=\sup_{\tau\in \mathcal{T}_{x,b}^\delta}\ln \mathbb{E}_x\left[\exp\left(\int_0^{\tau\wedge \tau_B} g(X_s)ds +G(X_{\tau\wedge \tau_B})\right)\right], \quad x\in E,\\
    u_B^n(x)&:=\sup_{\substack{\tau\in \mathcal{T}^\delta\\ \tau\leq n\delta}}\ln \mathbb{E}_x\left[\exp\left(\int_0^{\tau\wedge \tau_B} g(X_s)ds +G(X_{\tau\wedge\tau_B})\right)\right], \quad n \in \mathbb{N}, \,x\in E,
\end{align*}
    and note that we get $u_B^n(x)\leq \hat{u}_B(x)\leq u_B(x)$, \, $x\in E$. Next, note that using  the boundedness of $G$  and Proposition~\ref{pr:UI}, by Lebesgue’s dominated convergence theorem, we obtain
\[
u_B(x)=\sup_{\tau\in \mathcal{T}}\lim_{n\to\infty}\ln \mathbb{E}_x\left[\exp\left(\int_0^{\tau\wedge (n\delta)\wedge \tau_B} g(X_s)ds +G(X_{\tau\wedge (n\delta)\wedge\tau_B})\right)\right], \quad x\in E.
\]
Also, for any $n\in \mathbb{N}$, $x\in E$, and $\tau\in \mathcal{T}^\delta$, we get
\[
u_B^n(x)\geq \ln \mathbb{E}_x\left[\exp\left(\int_0^{\tau\wedge (n\delta)\wedge \tau_B} g(X_s)ds +G(X_{\tau\wedge (n\delta)\wedge\tau_B})\right)\right], \quad x\in E.
\]
Thus, letting $n\to\infty$ and taking the supremum with respect to $\tau\in \mathcal{T}^\delta$, we get $\lim_{n\to\infty}u_B^n(x)\geq u_B(x)$, $x\in E$. Consequently, we have
\[
\lim_{n\to\infty}u_B^n(x)=\hat{u}_B(x)= u_B(x), \quad x\in E,
\]
which concludes the proof of~\eqref{eq:pr:stop_families:1}.

Let us now show the continuity of the map $ x\mapsto u_B(x)$ and the martingale characterisation. To see this, note that using a standard argument one may show that, for any $n\in \mathbb{N}$ and $ x\in B$, we get
\begin{align*}
    u^0_B(x)&=G(x), x\in B,\\
    e^{u^{n+1}_B(x)}&=\max(e^{G(x)},\mathbb{E}_x\left[1_{\{X_{\delta}\in B\}}e^{\int_0^\delta g(X_s)ds+u^n_B(X_\delta)}+1_{\{X_{\delta}\notin B\}}e^{\int_0^\delta g(X_s)ds+G(X_\delta)}\right],
\end{align*}
and, for any $n\in \mathbb{N}$ and $ x\notin B$, we get $u^n_B(x)=G(x)$; see e.g. Section 2.2 in~\cite{Shi1978} for details. Thus, letting $n\to\infty$, for $x\in B$, we have
\[
 e^{u_B(x)}=\max(e^{G(x)},\mathbb{E}_x\left[1_{\{X_{\delta}\in B\}}e^{\int_0^\delta g(X_s)ds+u_B(X_\delta)}+1_{\{X_{\delta}\notin B\}}e^{\int_0^\delta g(X_s)ds+G(X_\delta)}\right],
\]
while for $x\notin B$, we get $u_B(x)=G(x)$. Also, using Assumption~\eqref{A2}, we get that the process X is strong Feller. Thus, repeating the argument used in Lemma 4 from Chapter II.5 in~\cite{GikSko2004}, we get that, for any bounded and measurable function $h\colon E\mapsto \mathbb{R}$, the map
\[
E\ni x\mapsto\mathbb{E}_x\left[e^{\int_0^\delta g(X_s)ds}h(X_t)\right]
\]
is continuous and bounded. Applying this observation to $h(x):=1_{\{x\in B\}}e^{u_B(x)}$ and $h(x):=1_{\{x\notin B\}}e^{G(x)}$, $x\in E$, we get the continuity of $x\mapsto u_B(x)$. Also, using the argument from Proposition~\ref{pr:_delta_m:martingale} we get that $z_\delta(n)$, $n\in \mathbb{N}$ is a supermartingale and $z(n\wedge \hat\tau/\delta)$, $n\in \mathbb{N}$, is a martingale, which concludes the proof.
\end{proof}

\bibliographystyle{plain}
\bibliography{RSC_bibliografia}

\section*{Statements and Declarations}
Damian Jelito and {\L}ukasz Stettner acknowledge research support by Polish National Science Centre grant no. 2020/37/B/ST1/00463.

The authors have no relevant financial or non-financial interests to disclose.

The authors contributed equally to this work.

\end{document}